\newtheorem {theorem}    {Theorem}[section]
\newtheorem {lemma}      [theorem]    {Lemma}
\newtheorem {corollary}  [theorem]    {Corollary}
\newtheorem {proposition}[theorem]    {Proposition}
\newcommand{\bb}{\mathbb}
\renewcommand{\rm}{\mathrm}
\newcommand{\wt}{\widetilde}
\newcommand{\cal}{\mathcal}
\theoremstyle{definition}
\newtheorem{remark}[theorem]{Remark}
\newtheorem{ex}[theorem]{Example}
\numberwithin{equation}{section}
\begin{document}

\title{Theta lifting for loop groups}

\date{\today}

\author[Dongwen Liu]{Dongwen Liu}

\address{School of Mathematical Sciences, Zhejiang University, Hangzhou 310027, Zhejiang, P.R. China}

\email{maliu@zju.edu.cn}

\author[Yongchang Zhu]{Yongchang Zhu$^{*}$}

\address{Department of Mathematics, The Hong Kong University of Science and Technology, Clear Water Bay, Kowloon, Hong Kong}

\thanks{$^{*}$ The second author's  research is supported by Hong Kong RGC grant  16305217 and 16305619.}

\email{mazhu@ust.hk}

\subjclass[2010]{Primary 22E67; Secondary 22E55}

\begin{abstract}
In this paper we study the theta lifting for loop groups and extend the classical tower property established by S. Rallis to the loop setting. As an application we obtain cusp forms on loop groups, and we give the first example where the cusp forms constructed using this method are nonvanishing.
\end{abstract}

\maketitle

\section{Introduction}

The study of automorphic forms on loop groups was initiated by Howard Garland.
 In a series of papers \cite{G3}--\cite{G8}, he has studied the Eisenstein series on loop groups  among many other
 things. Subsequent works in this area include studies of representations of loop groups over $p$-adic fields through various sophisticated techniques  \cite{BK1, BGKP, BKP, P2}, which will have important applications to automorphic forms.
   See also \cite{P1, Li1, GMP, LL} for  subsequent works  on loop Eisensteins series, and \cite{CLL1, CLL2, CGLLM} for the Kac-Moody Eisenstein series beyond the loop case. We refer the readers to \cite{BK2} for a more complete list of literatures.

On the other hand, in the joint work \cite{GZ1, GZ2} of H. Garland with the second named author, the Siegel-Weil formula was established for loop groups which relates the integral of theta functions and certain Eisenstein series on loop groups, in a way analogous to the finite dimensional situation. See \cite{Z1} for some background on  the theory of Weil representations and theta functionals for loop symplectic groups, and also  \cite{Li2, LZ, Z2} for more related results on these subjects.
The main goal of this paper is to  further study the theta lifting, and in particular adapt the so-called tower property to the loop group setting. To begin with, let us first explain the motivation and summarize some classical results in the finite dimensional case.

The method of theta lifting between reductive dual pairs via Weil representations provides an extremely useful tool and has wide applications in representation theory and automorphic forms. See \cite{H}
for a systematic formulation.  In the global theory, one considers the lifting of an irreducible automorphic cuspidal representation $\pi$ of a classical group $G$ to a Witt tower $\{G'_n\}_{n\geq 0}$ such that $(G,G'_n)$ form a family of reductive dual pairs. Here $n$ stands for the split rank, and the groups $G, G_n'$ sometimes are metaplectic covering groups. Let us denote the corresponding theta liftings by $\theta_{G,G'_n}(\pi)$. The remarkable tower property due to S. Rallis \cite{R1} asserts the following important fact. Let $P_{n,l}$ be the maximal parabolic subgroup of $G'_n$ which stabilizes an $l$-dimensional isotropic space, $l\leq n$, so that its Levi component is $\rm{GL}_l\times G'_{n-l}$. Then the constant term of $\theta_{G,G'_n}(\pi)$ along the unipotent radical of $P_{n,l}$, as a representation of  $G'_{n-l}$ is equal to $\theta_{G,G'_{n-l}}(\pi)$.

One can deduce several important consequences from the tower property, such as that the first occurrence of the theta liftings of $\pi$ is cuspidal while the latter occurrences are never cuspidal, and moreover the theta lifting has an involution property. These facts have been established for all reductive dual pairs in the works \cite{R1, R2, M1, M2, JS, Wu}. We should mention that in proving these facts one also needs the regularized Siegel-Weil formula which is another powerful tool in the study of automorphic forms.

Now we turn to the loop setting. We shall briefly give some details below which are necessary in order to formulate the main results and applications.
To simplify notations, for a finite dimensional vector space $\mathbb{W}$ over a field $F$ we denote
\[
\bb{W}((t))=\bb{W}\otimes_F F((t)),\quad \bb{W}[[t]]=\bb{W}\otimes_F F[[t]],\quad \bb{W}[t^{-1}]t^{-1}=\bb{W}\otimes_F F[t^{-1}]t^{-1}.
\]
Let $F$ be a number field and $\bb{A}$ be the adele ring of $F$. Similar notations apply for certain adelic spaces with $F((t))$ replaced by $\bb{A}\langle t\rangle$, which is a certain restricted product of
local Laurent series $F_v((t))$ (see Section 2.2). We also need $F\langle t\rangle:=F((t))\cap \bb{A}\langle t\rangle$. Let $(G,G')=(\rm{Sp}(W),\rm{O}(V))$ be a reductive dual pair over $F$ sitting inside $\rm{Sp}(\bb{W})$, where $W$ and $V$ are finite dimensional symplectic and orthogonal spaces over $F$ respectively, and $\bb{W}:=W\otimes_FV$.

Following \cite{Z1}, we have the adelic loop symplectic group $\widetilde{\mathrm{Sp}}(\bb{W}_{\bb A}\langle t\rangle)$, which is a central extension of $\mathrm{Sp}(\bb{W}_\bb{A}\langle t\rangle)$ and can be written as a certain restricted product of local loop groups. Its Weil representation, denoted by $\omega$, is realized on a certain space $\mathcal{S}'({\bb X}_{\bb A})$ of Bruhat-Schwartz functions, where ${\bb X}:={\bb W}[t^{-1}]t^{-1}$ is a Lagrangian space of $\bb{W}\langle t\rangle$. An important new feature in the loop setting is that in the definition of $\cal{S}'({\bb X}_{\bb A})$ one has to invoke a parameter of loop rotations $q: t\mapsto qt$, where $q\in\bb{A}^\times$ has idele norm $|q|>1$.  Then the theta functional for $\phi\in \mathcal{S}'({\bb X}_{\bb A})$,
\[
\theta_\phi(g):=\sum_{r\in {\bb X}_F}\omega(g)\phi(r)
\]
converges absolutely \cite{Z1,LZ}, and defines an automorphic function on the arithmetic quotient $\rm{Sp}(\bb{W}\langle t\rangle)\backslash \widetilde{\rm{Sp}}(\bb{W}_\bb{A}\langle t\rangle)$.

Let $\pi$ be an irreducible automorphic cuspidal representation of $G'(\bb{A})$. For $\phi$ as above and $f\in \pi$, we pull back $f$ to $G'(\bb{A}\langle t\rangle_+)$ via the evaluation map $G'(\bb{A}\langle t\rangle_+) \to G'(\bb{A})$ at $t=0$, where $\bb{A}\langle t\rangle_+=\bb{A}\langle t\rangle\cap \bb{A}[[t]]$, and define the theta lifting to $G(\bb{A}\langle t\rangle)$ by
\begin{equation}\label{1.1}
\theta^\phi_f(g)=\int_{G'(F\langle t\rangle_+)\backslash G'(\bb{A}\langle t\rangle_+)}\theta_\phi(g,h)f(h)dh,\quad g\in G(\bb{A}\langle t\rangle),
\end{equation}
noting that the quotient space $G'(F\langle t\rangle_+)\backslash G'(\bb{A}\langle t\rangle_+)$ is locally compact. We can show that (see Theorem \ref{weil} and Proposition \ref{prop3.2}) the theta function is of polynomial growth hence the theta integral converges thanks to the rapid decay of cusp forms. We also remark that in defining the theta lifting (\ref{1.1}) we take the integral only over $G'(F\langle t\rangle_+)\backslash G'(\bb{A}\langle t\rangle_+)$ instead of $G'(F\langle t\rangle)\backslash G'(\bb{A}\langle t\rangle)$, the main reason
 is that  $G'(\bb{A}\langle t\rangle_+)$ is the largest subgroup of $G'(\bb{A}\langle t\rangle)$
   that preserves $\bb{W}_{\bb A} [[ t]]$, see also the comments after the statement of loop version of Siegel-Weil formula in Section \S\ref{s3}.

Let $W=\ell^-\oplus W_0\oplus \ell^+$ be a decomposition where $W_0$ is nondegenerate and $W_0^\perp=\ell^-\oplus \ell^+$ is a polarization. Let $P$ be the maximal parabolic subgroup of  $\rm{Sp}(W\langle t\rangle)$ stabilizing $t^{-1}\ell^-\oplus W\langle t\rangle$. Then our main result Theorem \ref{main} states that the constant term of $\theta^\phi_f(g)$ along the unipotent radical of $P$ is equal to the classical theta lifting from $G'(\bb{A})$ to $\rm{Sp}(W_\ell)$, where $W_\ell:=t^{-1}\ell^-\oplus \ell^+$. As an immediate consequence, we obtain Corollary \ref{cor} that if the first occurrence of the classical theta liftings of $\pi$ takes place at $\widetilde{G}_n(\bb{A})=\widetilde{\rm{Sp}}_{2n}(\bb{A})$, then  its lifting to each $\widetilde{G}_j(\bb{A}\langle t\rangle)$, $j<n$, is cuspidal in the sense that the constant term along each standard proper parabolic subgroup is zero.

The organization as well as some other interesting features of this paper, are described as follows. In Section \S\ref{s2} we briefly recall the local and global theory of Weil representations and theta functionals for loop symplectic groups following \cite{Z1, LZ}. In Section \S\ref{s3} we establish the absolute convergence of theta integrals and extend the loop Siegel-Weil formula in \cite{GZ1, GZ2} beyond the anisotropic case. Namely, we prove a  loop analog of the Weil's convergence criterion for theta integrals (Theorem \ref{weil}) under which the Siegel-Weil formula (Theorem \ref{swf}) holds. The proof invokes A. Weil's original arguments \cite{W2}, as well as an inequality on Arakelov divisors \cite{GS} which was used already in our previous work \cite{LZ}. In Section \S\ref{s4} we prove our main results Theorem \ref{main} and Corollary \ref{cor} under the assumption that $G'=\rm{O}(V)$ is anisotropic. In this case one does not need the cuspidality and in fact one only needs to take the theta integral over $G'(F)\backslash G'(\bb{A})$ in (\ref{1.1}). Then we also show that such modified theta liftings are always nonzero (Proposition \ref{propf}) as long as the parameter $q$ has large idele norm. Applying this result, we construct two explicit examples of nonzero cusp forms on the loop group of $\rm{SL}_2$, based on the work of R.Howe and I.I. Piatetski-Shapiro \cite{H-PS} and H. Yoshida \cite{Y} respectively. To our best knowledge these are the first examples of nonzero cusp forms on loop groups. Finally in Section \S\ref{s5} we prove Theorem \ref{main} in general. The proof is quite involved and makes use of some variants of several ingredients from \cite{GZ1, GZ2}. The rough idea is to analyze the affine Gra\ss mannian and loop group orbits, which enable us to break the theta integral into a bunch of orbital integrals. Then we show that the nonvanishing orbital integrals are exactly those we need in the tower property.

Let us further comment on the representation-theoretic aspects of our main result. 
The existence of cusp forms for finite dimensional groups is best considered in the
context of  trace formula, while the spectral theory and trace formula for automorphic forms on loop
groups seem out of reach for now. The construction of cuspidal functions on loop groups in this paper
is analogous to the earlier explicit construction of cuspidal Maass forms using real quadratic fields. The cuspidal functions produced here are in fact associated with  loop group representations. We expect that these functions should be finite under the action of the central elements in the completed universal enveloping algebra of the affine Kac-Moody algebra at archimedean places, and should be Hecke eigenforms at unramified non-archimedean places.  As  some first evidence, we mention that

\begin{itemize}
\item  for an archimedean
component of the Weil representation of the adelic loop symplectic group in this paper,  we can construct the corresponding representation of the affine Kac-Moody algebra, which is in analogy with the free field realization of the algebra; 

\item for a $p$-adic place with $p\neq 2$,  our Weil representation contains a $K$-fixed function.  The
spherical Hecke algebra studied in \cite{BK1, BKP}  (more precisely an extension of it to
metaplectic loop groups) should act on the $K$-fixed function as an eigenfunction.
\end{itemize}

Finally, in a subsequent work  we will follow the arguments in \cite{G-PS-R, L, PS-R, R1} to  further develop a loop analog of the Rallis inner product formula.  The main ingredients we need in proving such a formula include some geometry of affine Gra\ss mannians, a new extension of the loop Siegel-Weil formula as well as the Satake isomorphism for loop groups \cite{BKP}.

{\bf Acknowledgement.} The authors would like to thank Howard Garland, Dihua Jiang, Jianshu Li, Manish Patnaik, Chenyan Wu and Jun Yu for some helpful discussions during the preparation of this work. The authors thank the anonymous referees for their valuable comments and suggestions which significantly improve the paper. 

\section{Weil representations and theta functionals of symplectic loop groups}\label{s2}

In this section we briefly recall the Weil representations of symplectic loop groups over local and global fields following \cite{Z1} and \cite[Section 2]{GZ2}, as well as the absolute convergence of theta functionals \cite[Theorem 3.1]{LZ}.

\subsection{Local theory} \label{2.1} Let $F$ be a field of characteristic 0, and $W$ be a symplectic vector space of dimension $2n$ over $F$, with symplectic form denoted by $\langle,\rangle_F$. It gives rise to an $F((t))$-valued symplectic form $\langle,\rangle_{F((t))}$ on $W((t))$ by scalar extensions. It further gives
an $F$-valued symplectic form $\langle,\rangle$ on $W((t))$ by taking the residue
\begin{equation}\label{form}
\langle w, w'\rangle=\textrm{Res }\langle w, w'\rangle_{F((t))},
\end{equation}
where $w, w'\in W((t))$, and Res $a$ for $a\in F((t))$ is the coefficient of $t^{-1}$ in $a$. It should be pointed out that, with this definition $\langle, \rangle_F$ is {\it not} the restriction of
$\langle,\rangle$ to the subspace $W$. Moreover for an $F$-algebra $A$ we write $\langle,\rangle_A$ for the symplectic form on $W_A:=W\otimes_FA$ given by scalar extension from $\langle,\rangle_F$. For example, later  we will occasionally write $\langle, \rangle_\mathbb{A}$ for a symplectic form on $W_\bb{A}$, where $\mathbb{A}$ is the adele ring of a number field $F$. Getting back to the general discussion, the spaces
\begin{equation}\label{lag}
X=W[t^{-1}]t^{-1}, \quad Y=W[[t]]
\end{equation}
are maximal isotropic subspaces of $W((t))$, also  called Lagrangian subspaces. We also write for example $X=X_F$ if we want to specify the field $F$. Denote by
\begin{equation}\label{pro}
p_X: W((t))\to X, \quad p_Y: W((t)) \to Y
\end{equation}
the natural projections. Let $\textrm{Sp}(W((t)),Y)$ be the group of all $F$-linear symplectic isomorphisms $g$ of $W((t))$ such that $Yg$ and $Y$ are commensurable, which contains $\textrm{Sp}_{2n}(F((t)))$ as a subgroup. Here by convention $\textrm{Sp}(W((t)),Y)$ acts on $W((t))$ from the right. Then with respect to the decomposition $W((t))=X\oplus Y$, each $g\in\textrm{Sp}(W((t)),X)$ can be represented by a matrix
\begin{equation} \label{matrix}
g=\begin{pmatrix} a_g & b_g \\  c_g & d_g\end{pmatrix}.
\end{equation}
Note that the image of $c_g: Y\to X$ is a finite dimensional space over $F$.
The group law of the Heisenberg group $H=W((t))\times F$ is defined by
\[
(w, z)(w',z')=(w+w', \frac{1}{2}\langle w, w'\rangle+z+z').
\]
Then $\textrm{Sp}(W((t)),Y)$ acts on $H$ from the right by $(w,z)\cdot g=(w\cdot g, z)$.

Now assume further that $F$ is a local field.
Let $\mathcal{S}(X)$ be the space of complex valued Schwartz functions on $X$, i.e. functions whose restriction to each finite dimensional subspace is a Schwartz function in the usual sense.
Typical examples include

\begin{itemize}

\item the characteristic function of $t^{-1}\mathcal{O}[t^{-1}]^{2n}\subset X=t^{-1}F[t^{-1}]^{2n}$, where $F$ is a $p$-adic local field with ring of integers $\mathcal{O}$ and  $W=F^{2n}$ is a symplectic space;

\item  the Gaussian
function $e^{\pi iQ(x)}$, where $F$ is archimedean and $Q(x)$ is a complex valued quadratic form on  $X$ with positive definite imaginary part. An example for $F=\mathbb{R}$ is given by $Q(x)=(x, x\Omega)$ for $\Omega=iI$ the purely imaginary element in the loop upper half-space defined in \cite[Section 4]{Z1}.
\end{itemize}
Fix a non-trivial additive character $\psi$ of $F$. Then the Heisenberg group $H$ acts on $\mathcal{S}(X)$ in the usual way such that the central element $(0,z)$ acts by the scalar
$\psi(z)$.
For $g\in \textrm{Sp}(W((t)), Y)$ with decomposition (\ref{matrix}) and a choice of Haar measure on $\textrm{Im }c_g$, define an operator $T_g$ on $\mathcal{S}(X)$ by
\begin{equation}\label{action}
(T_g\phi)(x)=\int_{\textrm{Im }c_g}S_g(x+y)\phi(xa_g+ yc_g)d(yc_g),
\end{equation}
where $\phi\in \mathcal{S}(X)$ and
\[
S_g(x+y)=\psi\left(\frac{1}{2}\langle xa_g, xb_g\rangle+\frac{1}{2}\langle yc_g, yd_g\rangle+\langle yc_g, xb_g\rangle\right).
\]
In particular, if $c_g=0$ then
\begin{equation}\label{gamma0}
(T_g\phi)(x)=\psi\left(\frac{1}{2}\langle xa_g, xb_g\rangle\right)\phi(xa_g).
\end{equation}
Note that more explicitly one has
\[
xa_g= p_X(xg), \quad xb_g= p_Y(xg), \quad xg=xa_g+xb_g,
\]
and it is clear that
\begin{equation}\label{rel}
\langle x_1a_g, x_2 b_g\rangle=\langle x_1g, x_2b_g\rangle=\langle x_1a_g, x_2g\rangle, \quad x_1, x_2\in X.
\end{equation}
The operator $T_g$ is compatible with the Heisenberg group action, i.e. for $h\in H$ one has
\[
T_g^{-1}h T_g=h\cdot g.
\]
It was proved in \cite[Proposition 2.8]{Z1} that $T_{g_1}T_{g_2}$ coincides with $T_{g_1g_2}$ up to a scalar, and $g\mapsto T_g$ gives a projective representation of $\textrm{Sp}(W((t)),Y)$ on $\mathcal{S}(X)$.  By restriction, we obtain a projective representation of $\textrm{Sp}_{2n}(F((t)))$ on $\mathcal{S}(X)$. Then the cocycle of this representation is given by 
\cite[Theorem 3.3]{Z1}: for $a, b\in F((t))^\times$,
\begin{equation} \label{symbol}
(a,b)=\frac{\gamma(a_0,\psi)^{\varepsilon(a)}\gamma(b_0,\psi)^{\varepsilon(b)}}{\gamma(a_0b_0,\psi)^{\varepsilon(ab)}}\left|C(a,b)\right|^{-1/2},
\end{equation}
where  
\begin{itemize}
\item $a=a_0 t^{v(a)}u_a$, $b=b_0 t^{v(b)}u_b$ with $a_0, b_0\in F^\times$, $u_a, u_b\in 1+tF[[t]]$, \\
\item $\gamma(c_0,\psi)$, $c_0\in F^\times$, is the Weil index of the distribution $\psi(\frac{1}{2}c_0x^2)$ (see e.g. \cite{W1}), \\
\item $\varepsilon(c)=0$ or $1$ according to the valuation $v(c)$ of $c\in F((t))^\times$ with respect to the local parameter $t$ is even or odd, \\
\item $C(a,b)$ is the usual tame symbol, and $|\cdot|$ is the absolute value on $F$ associated to a Haar measure.
\end{itemize} 
We define the metaplectic loop group $\widetilde{\textrm{Sp}}_{2n}(F((t)))$ to be the central extension for the symbol (\ref{symbol})
\begin{equation}\label{cen}
1\longrightarrow \mathbb{C}^\times \longrightarrow \widetilde{\textrm{Sp}}_{2n}(F((t))) \longrightarrow \textrm{Sp}_{2n}(F((t)))\longrightarrow 1,
\end{equation}
and call its representation on $\mathcal{S}(X)$ as above the Weil representation, denoted by $\left(\omega, \mathcal{S}(X)\right)$.

We also need to introduce the reparametrization group
\[
\textrm{Aut }F((t))=\left\{\sum^\infty_{i=1}a_it^i\in F[[t]]t: a_1\neq 0\right\},
\]
which plays an important role in the loop setting as we will see shortly. The group law is given by $(\sigma_1 \cdot \sigma_2)(t)=\sigma_2(\sigma_1(t))$, $\sigma_1, \sigma_2\in \textrm{Aut }F((t))$. It acts on $F((t))$ and the formal 1-forms $F((t))dt$ from the right by
\begin{equation} \label{rot-act}
a(t)\cdot\sigma(t)=a(\sigma^{-1}(t))\textrm{\quad and \quad} a(t)dt\cdot \sigma(t)=a(\sigma^{-1}(t))d\sigma^{-1}(t)
\end{equation}
respectively. Here for an element $\sigma(t)=\sum^\infty_{i=1}a_it^i \in \textrm{Aut }F((t))$, we denote $d\sigma(t)$ the 1-form 
\[
d\sigma(t)=\sum^\infty_{i=1} i a_i t^{i-1}dt.
\]
Note that $\textrm{Aut }F((t))$ contains the subgroup $\{ct: c\in F^\times\}$, the ``rotation of loops". Assume that the symplectic form $\langle,\rangle$ on $F^{2n}$ is represented by the matrix $\begin{pmatrix} 0 & I \\ -I & 0\end{pmatrix}$. View the first $n$ components of $F((t))^{2n}$ as elements in $F((t))$, and the last $n$ components as 1-forms in $F((t))dt$ (without writing $dt$), and let $\textrm{Aut }F((t))$ act them correspondingly. Then the action of $\textrm{Aut }F((t))$ on $W((t))$ preserves the symplectic form, so that $\textrm{Aut }F((t))$ can be viewed as a subgroup of $\textrm{Sp}(W((t)),Y)$. Thus it is clear that elements of $\textrm{Aut }F((t))$ act on $\mathcal{S}(X)$ by the formula (\ref{gamma0}). The group $\textrm{Aut }F((t))$ also acts on $\textrm{Sp}_{2n}(F((t)))$ as automorphisms, which lifts to an action on
$\widetilde{\textrm{Sp}}_{2n}(F((t)))$ and one may form the semi-direct product $\widetilde{\textrm{Sp}}_{2n}(F((t)))\rtimes \textrm{Aut }F((t))$. The readers are referred to \cite[Section 2.1]{GZ2} for more explanations about the action 
of the reparametrization group. 

It is well known that the loop group $\widetilde{\textrm{Sp}}_{2n}(F((t)))$ can be interpreted as an infinite dimensional Kac-Moody group corresponding to the untwisted affine Lie algebra associated to $\frak{sp}_{2n}$, whose Dynkin diagram is given by
\begin{equation}\label{dynkin}
\xymatrix{
\alpha_0\ar@{=>}[r]  &  \alpha_1\ar@{-}[r] & \alpha_2\ar@{-}[r] &  \cdots  \cdots \ar@{-}[r] & \alpha_{n-1}\ar@{<=}[r]& \alpha_n}
\end{equation}
Thus it has a presentation using Chevalley generators and relations, together with certain process of taking completions. We refer the readers to \cite[Section 2.1]{GZ2} for more details. Let $B_o$ be the standard Borel subgroup of $\textrm{Sp}_{2n}(F)$, and $B'$ be the preimage of $B_o$ under the projection $\textrm{Sp}_{2n}(F[[t]])\to \textrm{Sp}_{2n}(F)$. By 
\cite[Lemma 2.4]{GZ2}, the central extension (\ref{cen}) splits over $\textrm{Sp}_{2n}(F[[t]])$, and therefore the preimage of $B_o$ in $\widetilde{\textrm{Sp}}_{2n}(F((t)))$ is $B:=B'\times\mathbb{C}^\times$, which is called the standard Borel subgroup of $\widetilde{\textrm{Sp}}_{2n}(F((t)))$. The theory of Tits system shows that there is a Bruhat decomposition
\begin{equation}\label{bruhat}
\widetilde{\textrm{Sp}}_{2n}(F((t)))=B\widetilde{\mathcal{W}}B,
\end{equation}
where $\widetilde{\mathcal{W}}$ is the affine Weyl group. It is known that $\widetilde{\mathcal{W}}$ is isomorphic to the semi-direct product $\mathcal{W}\ltimes Q^\vee$, where $\mathcal{W}$ is the finite Weyl group and
$Q^\vee$ is the coroot lattice of $\frak{sp}_{2n}$.

We can define a ``maximal compact subgroup" $K$ of $\widetilde{\textrm{Sp}}_{2n}(F((t)))$ so that its intersection with the center $\mathbb{C}^\times$ is $S^1$, and its image $K'$ in $\textrm{Sp}_{2n}(F((t)))$ is as follows. If $F$ is a $p$-adic field with ring of integers $\mathcal{O}$, then $K'=\textrm{Sp}_{2n}(\mathcal{O}((t)))$; if $F$ is archimedean, then
\[
K'=\{g\in \textrm{Sp}_{2n}(F[t,t^{-1}]): \overline{g(t)}g(t^{-1})^T=I_{2n}\},
\]
where $\overline{(\cdot)}$ stands for the complex conjugate, and $(\cdot)^T$ is the matrix transpose. The standard use of BN-pairs as in \cite{S} shows that it holds the Iwasawa decomposition (cf. \cite[Theorem 16.8]{G1} 
and \cite[(2.15)]{GZ2})

\begin{equation}\label{iwa}
\widetilde{\textrm{Sp}}_{2n}(F((t)))=BK.
\end{equation}

We recall the following result given by \cite[Lemma 2.5 and 2.6]{GZ2}.

\begin{lemma}\label{fun} (i) If $F$ is non-archimedean with odd residue characteristic and ring of integers $\mathcal{O}$, and if the conductor of $\psi$ is $\mathcal{O}$, then the characteristic function
$\phi_0$ of $t^{-1}\mathcal{O}[t^{-1}]^{2n}$ is fixed by $K$.

(ii) If $F$ is archimedean, then there is a nonzero element $\phi_0\in \mathcal{S}(X)$ fixed by $K$ up to a scalar.

\end{lemma}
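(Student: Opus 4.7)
My plan for both parts is to verify $K$-invariance (up to scalar) of $\phi_0$ on a generating set of $K$, using the explicit action formula (\ref{action}). By the Iwasawa decomposition (\ref{iwa}) and the Kac--Moody/Chevalley presentation of $\widetilde{\rm{Sp}}_{2n}(F((t)))$ (cf.\ the Dynkin diagram (\ref{dynkin})), it suffices to check invariance on the finite ``classical'' subgroup coming from the simple roots $\alpha_1,\ldots,\alpha_n$ together with one extra generator (and the affine Weyl reflection) attached to the affine simple root $\alpha_0$; the Borel part lives in $B$ and acts diagonally in (\ref{matrix}), while the central $\bb C^\times$ only contributes a scalar.

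For part (i), a constant loop $g\in\rm{Sp}_{2n}(\mathcal{O})$ acts coefficient-wise on $W((t))=X\oplus Y$, so $b_g=c_g=0$ and (\ref{action}) collapses via (\ref{gamma0}) to $(T_g\phi_0)(x)=\phi_0(xg)$; writing $x=\sum_{k\ge 1}w_{-k}t^{-k}$, invariance follows slot-by-slot from $\mathcal{O}^{2n}\cdot g=\mathcal{O}^{2n}$. For the affine simple-root generator $x_{\alpha_0}(u)=x_{-\theta}(ut)$ and a lift of $s_{\alpha_0}$, the map $c_g$ has finite-dimensional image inside $W\cdot t^{-1}\subset X_\mathcal{O}$, so (\ref{action}) reduces to a finite-dimensional Gauss/Fourier integral over a lattice; odd residue characteristic together with the conductor condition on $\psi$ makes the resulting quadratic character sum, combined with the cocycle (\ref{symbol}), collapse to $1$, yielding $T_g\phi_0=\phi_0$.

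For part (ii), I take $\phi_0$ to be a Gaussian. Fix a positive-definite inner product $(\cdot,\cdot)$ on $W_F$ and set $Q(x)=\sum_{k\ge 1}(w_{-k},w_{-k})$ on $X=W[t^{-1}]t^{-1}$; then $\phi_0(x):=e^{-\pi Q(x)}\in\mathcal{S}(X)$, since its restriction to every finite-dimensional subspace is an ordinary Schwartz/Gaussian function. The form $Q$ is attached to the K\"ahler polarization $X\oplus Y$ compatible with the Hermitian structure that defines $K'=\{g:\overline{g(t)}g(t^{-1})^T=I\}$. For $g\in K'$ the image of $c_g$ is again finite-dimensional, so (\ref{action}) is a finite-dimensional Gaussian integral; evaluating it and using $K'$-equivariance of the Hermitian form shows that $T_g\phi_0$ has the same Gaussian exponent as $\phi_0$, hence differs from $\phi_0$ only by a scalar $\chi(g)$.

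The main obstacle in both cases is the affine piece. In (i) one must carefully book-keep the normalization of the Haar measure on $\mathrm{Im}\,c_g$ and the cocycle (\ref{symbol}) so that the Gauss sums that arise from the quadratic phase $S_g$ on the lattice $\mathcal{O}^{2n}$ collapse to $1$; odd residue characteristic is what buys this (it is the ``$2$ is a unit'' phenomenon that kills the Weil indices). In (ii) the analogous difficulty is that the Gaussian integrals must converge and reproduce the exponent $Q$, which amounts to a Fock-vacuum computation in the infinite-dimensional setting; positivity and $K'$-invariance of $Q$ inherited from the Hermitian structure are precisely what make this work.
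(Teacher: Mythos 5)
A preliminary remark: the paper does not prove Lemma \ref{fun} at all --- it is explicitly quoted from \cite{GZ2} --- so your proposal can only be compared with the argument in that reference. Your overall strategy (direct verification via the explicit operators (\ref{action}), using the lattice characteristic function in the $p$-adic case and a Gaussian adapted to the Hermitian structure defining $K'$ in the archimedean case) is the standard one and is in the spirit of the cited proof.

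As written, however, part (i) has a concrete gap at its core. You reduce to a generating set consisting of $\mathrm{Sp}_{2n}(\mathcal{O})$, the generator $x_{\alpha_0}(u)=x_{-\theta}(ut)$ and a lift of $s_{\alpha_0}$, but you neither justify that these generate $K'=\mathrm{Sp}_{2n}(\mathcal{O}((t)))$ nor is this true in any naive sense: one needs all root subgroups $x_\alpha(u)$ with $u\in\mathcal{O}((t))$ (which do generate, since $\mathcal{O}((t))$ is a local ring and $\mathrm{Sp}_{2n}$ is simply connected), together with a continuity/completion argument for elements involving infinitely many powers of $t$. Note also that $x_{-\theta}(ut)$ with $u\in\mathcal{O}$ preserves $Y=W[[t]]$, so $c_g=0$ for it and no Gauss sum arises there; the elements that genuinely produce Gauss sums are $x_{\theta}(ut^{-1})$ and the affine reflection, and that evaluation --- the entire analytic content of (i) --- is asserted rather than carried out. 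A cleaner route sidesteps both problems: $L=\mathcal{O}((t))^{2n}$ is self-dual for the residue pairing because the conductor of $\psi$ is $\mathcal{O}$, and since $2\in\mathcal{O}^\times$ the map $l\mapsto (l,0)$ gives an honest action of $L$ on $\mathcal{S}(X)$ whose fixed vectors form exactly the line $\mathbb{C}\phi_0$; as $K'$ stabilizes $L$, each $T_g$ preserves this line, and the resulting scalar is pinned down by a single evaluation using the conductor hypothesis and the triviality of the Weil indices in (\ref{symbol}) on $\mathcal{O}^\times$ in odd residue characteristic. Part (ii) is acceptable as a sketch, with the caveat (which you do flag) that the quadratic form must be the one attached to the complex structure whose unitary group is the given $K'$; an arbitrary positive-definite choice produces a vector fixed only by a conjugate of $K$.
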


We mention that for $F=\mathbb{R}$ and $\psi(x)=e^{\pm 2\pi i x}$, the element $\phi_0$ can be taken to be the standard Gaussian function $e^{\pi i (x, x\Omega)}$ with $\Omega=i I$ that we introduced earlier. See the proof of 
\cite[Lemma 2.5]{GZ2} for the complex analog.

\subsection{Global theory} From now on we assume that $F$ is a number field. Thus for each place $v$ of $F$ one has the local Weil representation $\omega_v$ of $\widetilde{\textrm{Sp}}_{2n}(F_v((t)))\rtimes
\textrm{Aut }F_v((t))$ on $\mathcal{S}(X_{v})$, where
\[X_v=W_v[t^{-1}]t^{-1},\quad W_v=W\otimes_F F_v.\]
Let $\mathbb{A}$ be the adele ring of $F$, and $\psi=\bigotimes\limits_v\psi_v$ be a non-trivial character of $\mathbb{A}/F$. For a finite place $v$, let $\mathcal{O}_v$ be the ring of integers of $F_v$. Define
\begin{equation} \label{A<t>}
\mathbb{A}\langle t\rangle =\left\{(a_v)\in\prod_v F_v((t)): a_v\in \mathcal{O}_v((t))\textrm{ for almost all finite places }v\right\},
\end{equation}
and 
\begin{equation} \label{F<t>}
F\langle t\rangle= F((t))\cap\mathbb{A}\langle t\rangle,
\end{equation} 
which is a subfield of $F((t))$. We have a short exact sequence 
\begin{equation}
1\longrightarrow \bigoplus_v \mathbb{C}^\times\longrightarrow \prod'_v \widetilde{\textrm{Sp}}_{2n}(F_v((t))\longrightarrow \textrm{Sp}_{2n}(\mathbb{A}\langle t\rangle)\longrightarrow 1,
\end{equation}
where $\prod'_v \widetilde{\textrm{Sp}}_{2n}(F_v((t))$ is the restricted product with respect to the ``maximal compact" subgroups $K_v$. The adelic metaplectic loop group $\widetilde{\textrm{Sp}}_{2n}(\mathbb{A}\langle t\rangle)$ with
$\mathbb{A}\langle t\rangle$ given by \eqref{A<t>}, is defined by pushing out the above exact sequence along the product map $\bigoplus_v \mathbb{C}^\times \to \mathbb{C}^\times$, that is, we have a commutative diagram with exact rows
\begin{equation}\label{ade}
\xymatrix{
1 \ar[r] & \bigoplus_v\mathbb{C}^\times \ar[r] \ar[d]  & \prod'_v \widetilde{\textrm{Sp}}_{2n}(F_v((t)) \ar[r] \ar[d] &  \textrm{Sp}_{2n}(\mathbb{A}\langle t\rangle) \ar[r] \ar@{=}[d] & 1\\
1  \ar[r] &  \mathbb{C}^\times  \ar[r] & \widetilde{\textrm{Sp}}_{2n}(\mathbb{A}\langle t\rangle) \ar[r] &
\textrm{Sp}_{2n}(\mathbb{A}\langle t\rangle) \ar[r] & 1
}
\end{equation}
The adelic group $\textrm{Aut }\mathbb{A}\langle t\rangle$ can be defined similarly; we refer the readers to \cite[Section 2]{LZ} for the precise definition. By the proof of \cite[Lemma 2.9]{GZ2}, the local symbols (\ref{symbol}) satisfy the product formula, hence the central extension (\ref{ade}) splits over $\textrm{Sp}_{2n}(F\langle t\rangle)$.

By Lemma \ref{fun} for almost all finite places $v$ there is $\phi_{0,v}\in \mathcal{S}(X_{v})$ fixed by $K_v$. The restricted tensor product $\mathcal{S}(X_{\mathbb{A}}):=\bigotimes'_v \mathcal{S}(X_{v})$ with respect to $\{\phi_{0,v}\}$, that is, spanned by functions of the form $\bigotimes_v\phi_v$, where $\phi_v\in \mathcal{S}(X_v)$ and $\phi_v=\phi_{0, v}$ for almost all finite spaces $v$, is a representation of $\widetilde{\textrm{Sp}}_{2n}(\mathbb{A}\langle t\rangle)$, called the global Weil representation. Let us denote this representation by $\omega=\bigotimes'_v\omega_v$.
We need to further introduce a suitable subrepresentation, on which the theta functional
\begin{equation}\label{theta}
\phi\mapsto \theta(\phi)=\sum_{r\in X_{F}}\phi(r)
\end{equation}
defined in the usual way, is absolutely convergent.

For a finite place $v$, a subgroup of the Heisenberg group $H_v=F_v((t))^{2n}\times F_v$ is called a congruence subgroup if it contains $\varpi_v^k((t))^{2n}$ for some integer $k$, where $\varpi_v\in\mathcal{O}_v$ is a local parameter. A function $\phi_v\in \mathcal{S}(X_{v})$ is called elementary if it is bounded and fixed by a congruence subgroup of
$H_v$. For example the function $\phi_0$ in Lemma \ref{fun} (i) is elementary. Let $\mathcal{E}(X_{\mathbb{A}})$ be the space of functions on $X_{\mathbb{A}}$ which are finite linear combinations of $\omega(g)\cdot\bigotimes\limits_v \phi_v$, where
$g\in\widetilde{\textrm{Sp}}_{2n}(\mathbb{A}\langle t\rangle)$, and the following hold:

\begin{itemize}

\item $\phi_v=\phi_{0,v}$ for almost all finite places $v$;

\item $\phi_v$ is elementary for each remaining finite place $v$;

\item $\phi_v=P_v\cdot \phi_{0,v}$ for each archimedean place $v$, where $P_v$ is a polynomial function on some finite dimensional subspace $W_v[t^{-1},\ldots, t^{-N}]$ of $X_v$.

\end{itemize}
It is clear that $\mathcal{E}(X_{\mathbb{A}})$ is a subrepresentation of $\mathcal{S}(X_{\mathbb{A}})$.

We also have to introduce a sub-semigroup of $\textrm{Aut }\mathbb{A}\langle t\rangle$ by
\[
\textrm{Aut}^+\mathbb{A}\langle t\rangle=\left\{\left(\sum^\infty_{i=1}a_{i,v}t^i\right)_v\in\textrm{Aut }\mathbb{A}\langle t\rangle: \prod_v |a_{1,v}|>1\right\}.
\]
Then we let $\mathcal{S}'(X_{\mathbb{A}})=\mathrm{Aut}^+\mathbb{A}\langle t\rangle  \cdot \mathcal{E}(X_{\mathbb{A}})$,
where $\mathrm{Aut}^+\mathbb{A}\langle t\rangle$ acts on $\mathcal{E}(X_{\mathbb{A}})$ from the left by change of variables (cf. \eqref{rot-act})
\[
\left(\sigma(t)f\right)(a(t))=f(a(t)\cdot \sigma^{-1}(t))=f(a(\sigma(t)), 
\]
for $\sigma(t)\in \textrm{Aut}^+\mathbb{A}\langle t\rangle$, $f\in  \mathcal{E}(X_{\mathbb{A}})$ and $a(t)\in X_{\mathbb{A}}$. 
 For later use, we mention that
\[
\textrm{Aut}^+\mathbb{A}\langle t\rangle = \mathbb{A}^\times_{>1}t\ltimes \textrm{Aut}^0\mathbb{A}\langle t\rangle,
\]
where  
\begin{align*}
& \mathbb{A}^\times_{>1}=\{q=(q_v)_v\in \mathbb{A}^\times: |q|=\prod_v|q_v|>1\}, \\
&
\mathbb{A}^\times_{>1}t=\left\{qt\in\textrm{Aut }\mathbb{A}\langle t\rangle: q\in \mathbb{A}^\times_{>1}\right\},\\
&
\textrm{Aut}^0\mathbb{A}\langle t\rangle=\left\{\left(t+\sum^\infty_{i=2}a_{i,v}t^i\right)_v\in\textrm{Aut }\mathbb{A}\langle t\rangle\right\}.
\end{align*}
In particular we have a disjoint union over the loop rotation parameter $q$,
\begin{equation}\label{part}
\mathcal{S}'(X_{\mathbb{A}})=\bigsqcup_{q\in \mathbb{A}^\times_{>1}}\mathcal{S}'(X_{\mathbb{A}})_q,
\end{equation}
where
\[
\mathcal{S}'(X_{\mathbb{A}})_q=qt\cdot \textrm{Aut}^0\mathbb{A}\langle t\rangle\cdot \mathcal{E}(X_{\mathbb{A}}).
\]

The main convergence result is given by \cite[Theorem 3.1]{LZ}:

\begin{theorem}\label{conv}
If $\phi \in  \mathcal{S}'(X_{\mathbb{A}})$, then the theta series $\theta(\phi)$ converges absolutely and is invariant under $\mathrm{Sp}_{2n}(F\langle t\rangle)$.
\end{theorem}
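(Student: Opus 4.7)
The plan is to separate the two assertions. The invariance of $\theta(\phi)$ under $\mathrm{Sp}_{2n}(F\langle t\rangle)$ is the softer half. Because the local cocycles (\ref{symbol}) satisfy the product formula, the adelic central extension (\ref{ade}) splits over $\mathrm{Sp}_{2n}(F\langle t\rangle)$, giving an honest action; the compatibility $T_g^{-1}hT_g=h\cdot g$ with the Heisenberg action shows that conjugating by $\omega(g)$ preserves the subgroup of Heisenberg elements with $F\langle t\rangle$-rational symplectic component. Once absolute convergence is known, the functional $\phi\mapsto\sum_{r\in X_F}\phi(r)$ is (up to a scalar) the unique linear functional on $\mathcal{S}'(X_\mathbb{A})$ that is invariant under translations by $X_F$ and multiplications by the additive characters $\psi(\langle\cdot,y\rangle)$ for $y\in Y_F$; hence $\theta\circ\omega(g)=\theta$ after tracing the scalar through the splitting, just as in the classical Weil picture.

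The substance is absolute convergence. Using $\mathcal{S}'(X_\mathbb{A})=\mathrm{Aut}^+\mathbb{A}\langle t\rangle\cdot\mathcal{E}(X_\mathbb{A})$ together with the decomposition $\mathrm{Aut}^+\mathbb{A}\langle t\rangle\cong\mathbb{A}^\times_{>1}\ltimes\mathrm{Aut}^0\mathbb{A}\langle t\rangle$, I would reduce to $\phi=\omega(qt)\phi_0$ with $|q|>1$ and $\phi_0\in\mathcal{E}(X_\mathbb{A})$ elementary; elements of $\mathrm{Aut}^0\mathbb{A}\langle t\rangle$ have leading term $t$ and preserve the elementary class after (possibly) shrinking the defining congruence subgroup. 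The element $qt$ lies in $\mathrm{Sp}(W((t)),Y)$ with $b_{qt}=c_{qt}=0$ relative to $W((t))=X\oplus Y$, so formula (\ref{gamma0}) gives
\[
|\omega(qt)\phi_0(r)|=|\phi_0(r\cdot qt)|.
\]
Writing $r\in X_F$ uniquely as $r=\sum_{i=1}^{N}w_it^{-i}$ with $w_i\in W_F$ and $w_N\neq 0$, the action of $qt$ scales the $i$-th coefficient by $q^i$ (up to a bounded factor from the $1$-form convention on the last $n$ coordinates), so the task is to bound
\[
\sum_{N\geq 1}\,\sum_{\substack{(w_1,\ldots,w_N)\in W_F^N\\ w_N\neq 0}}\bigl|\phi_0\bigl(\textstyle\sum_i w_iq^it^{-i}\bigr)\bigr|.
\]

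The main obstacle is this last estimate. At each finite place $v$, $\phi_{0,v}$ is bounded and forces every $w_i$ to lie in a fixed $\mathcal{O}_v$-lattice in $W_v$ (equal to the standard lattice for almost all $v$); at each archimedean place $v$, $\phi_{0,v}$ is Schwartz and hence admits a bound $\prod_i(1+|q|_v^i|w_i|_v)^{-M}$ for arbitrary $M$. With $N$ fixed, the sum then factors over $i$ into iterated sums over the coefficients $w_i\in W_F$ subject to the adelic lattice constraint, weighted by the archimedean decay in $|q|_v^i|w_i|_v$. Here I would invoke the inequality on Arakelov divisors from \cite{GS} (as in \cite{LZ}), which bounds the number of $w\in W_F$ of a given adelic size polynomially in that size: taking $M$ large relative to $\dim W$ and $[F:\mathbb{Q}]$ makes each inner sum $O(|q|^{-iM'})$ for some $M'>0$, so the product over $i=1,\ldots,N$ is $O(|q|^{-M'N(N+1)/2})$, and the sum over $N\geq 1$ converges super-geometrically because $|q|>1$. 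The delicate point is exactly this interplay between the polynomial Arakelov count of $F$-rational points and the geometric gain furnished by the loop rotation parameter $|q|>1$; once these are balanced, the absolute convergence, and with it the formal invariance argument above, falls out.
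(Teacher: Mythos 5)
This theorem is quoted in the paper as the main result of \cite{LZ} and is not proved here; the closest the paper comes is the sketch of Theorem \ref{weil}, whose estimates run exactly along the lines you propose. Your convergence strategy is essentially the intended one: reduce to $\phi=(qt)\cdot\phi_0$ with $\phi_0$ elementary, bound coefficient-by-coefficient in the $t^{-i}$-expansion using lattice constraints at finite places, archimedean decay, and the van der Geer--Schoof count of $F$-rational points of bounded adelic size, and let the geometric growth of $|q|^i$ do the rest. Two points need repair, though neither is fatal. First, your bookkeeping of the final estimate is off: for fixed top degree $N$ only the coefficient $w_N$ is constrained to be nonzero, so the inner sums over $w_i$ for $i<N$ contain the term $w_i=0$ and are therefore $\geq 1$, not $O(|q|^{-iM'})$; the product over $i$ is \emph{not} $O(|q|^{-M'N(N+1)/2})$. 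The correct conclusion is that the full sum over $X_F$ is dominated by a convergent infinite product of the shape $\prod_{i\geq 1}\bigl(1+\beta e^{-c|q|^{2i/d}}\bigr)$ (compare (\ref{est1})), which still gives absolute convergence, so the slip is in the stated rate, not in the method.

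Second, the two steps you treat as formalities are where the real work of \cite{LZ} sits. The reduction past $\mathrm{Aut}^0\mathbb{A}\langle t\rangle$ is not simply ``shrink the congruence subgroup'': an element $t+a_2t^2+\cdots$ preserves $Y$ but not $X$, so it acts with a nontrivial quadratic phase $\psi(\tfrac12\langle xa_\sigma,xb_\sigma\rangle)$, and disposing of it requires the Fourier-type argument of \cite[Lemma 4.8]{Z1} invoked in the proof of Theorem \ref{weil}, not a support consideration. And for the invariance under $\mathrm{Sp}_{2n}(F\langle t\rangle)$, the characterization of $\theta$ as the unique functional invariant under the rational Heisenberg group is a finite-dimensional distribution-theoretic fact that does not transfer for free to $\mathcal{S}'(X_{\mathbb{A}})$; the argument actually used is to verify invariance on a generating set, handling parabolic elements by the explicit formula (\ref{gamma0}) and the remaining Weyl-type generators by Poisson summation in finitely many $t^{-i}$-coefficients at a time, with the absolute convergence you established justifying the rearrangements. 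If you fill in those two lemmas, your outline becomes the proof.
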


\section{Absolute convergence of theta integrals and Siegel-Weil formula} \label{s3}

In this section we study the convergence of theta integrals which are used to define the theta lifting from an orthogonal loop group to a symplectic loop group.
We continue to assume that $F$ is a number field. %To simplify notations, in this paper for an algebraic group $H$ over the number field $F$ (possibly infinite dimensional, e.g. $H$ being pro-unipotent), we denote by $[H]$ for short the arithmetic quotient space $H(F)\backslash H(\mathbb{A})$.

Let $W$ be a symplectic space of dimension $2n$ over $F$ and $G=\textrm{Sp}(W)$. Let $V$ be a vector space over $F$ of dimension $m$ with a non-degenerate symmetric bilinear form $(,)$, and let $G'=\textrm{O}(V)$ be the isometry group. Then
\[
\bb{W}=W\otimes_F V
\]
is a symplectic space of dimension $2N:=2mn$, and $(G, G')$ is a reductive dual pair (see \cite{H}) inside $\textrm{Sp}(\bb{W})=\rm{Sp}_{2N}$. Similar to (\ref{lag}) in the last section, let
\[
\bb{X}=\bb{W}[t^{-1}]t^{-1}=X\otimes_F V,\quad \bb{Y}=\bb{W}[[t]]=Y\otimes_F V
\] which form a polarization of $\bb{W}((t))$, so that $\widetilde{\textrm{Sp}}_{2N}(\mathbb{A}\langle t\rangle)$ acts on $\mathcal{S}(\bb{X}_{\mathbb{A}})$ by the Weil representation. According to \cite{GZ2}, for the purpose of theta lifting it suffices to consider the parabolic subgroup $G'(F\langle t\rangle_+)$ and its adelic group $G'(\mathbb{A}\langle t\rangle_+)$, where $F\langle t\rangle_+=F\langle t\rangle\cap F[[t]]$ and $\mathbb{A}\langle t\rangle_+=\mathbb{A}\langle t\rangle\cap \mathbb{A}[[t]]$. Recall that $F\langle t\rangle$ is the subfield of $F((t))$ defined by \eqref{F<t>}. Note that
$F\langle t\rangle_+\backslash \mathbb{A}\langle t\rangle_+\cong F[[t]]\backslash \mathbb{A}[[t]].$

By \cite[Lemma 2.4]{GZ2} each local metaplectic cover splits over $G'(F_v[[t]])\subset \textrm{Sp}_{2N}(F_v[[t]])$, hence we may regard $G'(F_v[[t]])$ as a subgroup of $\widetilde{\textrm{Sp}}_{2N}(F_v((t)))$, and we have a dual pair
\[
(\wt{G}(F_v((t))), G'(F_v[[t]]))\hookrightarrow \widetilde{\textrm{Sp}}_{2N}(F_v((t))),
\]
where a central element $z\in \mathbb{C}^\times$ of $\widetilde{G}(F_v((t)))$ is mapped to the central element $z^m$ of $\widetilde{\textrm{Sp}}_{2N}(F_v((t)))$ under the above embedding (see \cite[Section 4]{GZ2}). Similarly one has an adelic dual pair
\[
(\widetilde{G}(\mathbb{A}\langle t\rangle), G'(\mathbb{A}\langle t\rangle_+)\hookrightarrow \widetilde{\textrm{Sp}}_{2N}(\mathbb{A}\langle t\rangle).
\]
Using (\ref{action}) one can show that the actions of $\widetilde{G}(\mathbb{A}\langle t\rangle)$ and $G'(\mathbb{A}\langle t\rangle_+)$ on $\mathcal{S}(\bb{X}_{\mathbb{A}})$ commute.

Let $\pi$ be an irreducible cuspidal automorphic representation of $G'(\mathbb{A})$, and $f\in\pi$ be a cusp form on $G'(\bb{A})$.
By abuse of notation, we also denote by $f$ its lift to $G'(\bb{A}\langle t\rangle_+)$,
\[
f: G'(\bb{A}\langle t\rangle_+)\to\mathbb{C},\quad h\mapsto f(h(0)).
\]
Then it is clear that $f$ is bi-invariant under the unipotent radical
\begin{equation}\label{ug}
N(\mathbb{A})=\{h\in G'(\mathbb{A}\langle t\rangle_+): h(0)=1\}.
\end{equation}
 In the definition of theta lifting below as well as latter parts of the paper, we will frequently make use of the following identification without further comments
\[
G'(F[[t]])\backslash G'(\mathbb{A}[[t]]) \cong G'(F\langle t\rangle_+)\backslash G'(\mathbb{A}\langle t\rangle_+),
\]
thanks to that $F\langle t\rangle_+\backslash \mathbb{A}\langle t\rangle_+\cong F[[t]]\backslash \mathbb{A}[[t]]$
and that $N(\mathbb{A})$ is pro-unipotent.

Let $\phi\in \mathcal{S}'(\bb{X}_{\mathbb{A}})$ and consider the theta liftings to $g\in \widetilde{G}(\mathbb{A}\langle t\rangle)$,
\begin{align}
\label{theta1}\theta_\phi(g)& = \int_{G'(F[[t]])\backslash G'(\mathbb{A}[[t]])} \theta_\phi(g, h) dh,\\
\label{thetaf}\theta^f_\phi(g)&=\int_{G'(F[[t]])\backslash G'(\mathbb{A}[[t]])} \theta_\phi(g, h)f(h)dh,
\end{align}
where
\begin{equation}\label{thetagh}
\theta_\phi(g,h):=\theta\left(\omega(g,h)\phi\right)=\sum_{r\in \bb{X}_{F}}\omega(g,h)\phi(r),
\end{equation}
and $dh$ is a product of a Haar measure on $G'(F)\backslash G'(\bb{A})$ and a probability measure on the pro-unipotent quotient space $N(F)\backslash N(\bb{A})$. Note that $N(F)\backslash N(\bb{A})$ is topologically a countable direct product of copies of the compact space $F\backslash \mathbb{A}$, which is thereby compact and inherits a product measure. It is apparent that  (\ref{thetaf}) is bounded by a multiple of (\ref{theta1}), and both theta integrals converge absolutely if $V$ is anisotropic (see \cite{GZ2}), in which case
$G'(F[[t]])\backslash G'(\mathbb{A}[[t]])$ is compact. The aim of this section is to prove the convergence results of these theta integrals in general, so that the theta lifting makes sense and gives automorphic functions on the  loop group arithmetic quotient $G(F\langle t\rangle)\backslash \widetilde{G}(\mathbb{A}\langle t\rangle)$.

We will first show that (\ref{theta1}) converges under a loop analog of Weil's criterion, and as a byproduct extend the Siegel-Weil formula established in \cite{GZ1, GZ2} to general orthogonal groups, which are not necessarily anisotropic. Recall that we have the partition (\ref{part}).

\begin{theorem} \label{weil}
There is a constant $c>0$ such that if $V$ is anisotropic or $m-b>cn/\log |q|+1$, where $b$ is the Witt index of $V$, then the theta integral (\ref{theta1})  is absolutely convergent for any $\phi\in \mathcal{S}'(\bb{X}_{\mathbb{A}})_q$. In particular for $|q|$ large enough, if $V$ is anisotropic or $m\geq 3$, then $(\ref{theta1})$ is absolutely convergent for any $\phi\in \mathcal{S}'(\bb{X}_{\mathbb{A}})_q$.
\end{theorem}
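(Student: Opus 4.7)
The strategy is to adapt A.~Weil's original reduction-theoretic argument \cite{W2} to the loop setting, using two loop-specific tools that already appeared in \cite{LZ}: the loop-rotation parameter $q$ with $|q|>1$, and the Arakelov divisor inequality of \cite{GS}.

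The first step is to \emph{unfold and apply reduction theory}. Under the identification $G'(F[[t]])\backslash G'(\bb{A}[[t]])\cong G'(F\langle t\rangle_+)\backslash G'(\bb{A}\langle t\rangle_+)$, the measure $dh$ is the product of a Haar measure on $G'(F)\backslash G'(\bb{A})$ with the probability Haar measure on the compact pro-unipotent quotient $N(F)\backslash N(\bb{A})$. Averaging $|\omega(g,h)\phi|$ over the pro-unipotent fiber produces a positive majorant $\phi^{+}\in\cal{S}'(\bb{X}_\bb{A})_q$, and by Fubini the problem reduces to the absolute convergence of $\int_{G'(F)\backslash G'(\bb{A})}\theta_{\phi^{+}}(g,h)\,dh$. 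Classical reduction theory for $G'$ then further reduces this to an integral over a Siegel set $\mathfrak{S}=\Omega\cdot A^{+}\cdot K$, where $A^{+}$ is a cone in the split torus of $G'$ of rank equal to the Witt index $b$ and carries the modulus $\prod_i|a_i|^{-2\rho_i}$.

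The second step is a \emph{level-by-level Weil estimate with $q$-decay}. Write $\bb{X}_F=\bigoplus_{k\geq 1}(W\otimes V)\cdot t^{-k}$. Under the loop rotation $q$, the $k$-th summand is scaled by $q^{-k}$, so the Gaussian majorant at level $k$ has width of order $|q|^{-k}$. At each fixed $k$ one runs Weil's classical analysis on a finite-dimensional theta sum valued in $W\otimes V$: the contribution of vectors whose projection to the anisotropic kernel of $V$ is nonzero is majorized by a Gaussian in the torus coordinates $a_1,\dots,a_b$, while the contribution of purely isotropic vectors is integrable against $\prod_i|a_i|^{-2\rho_i}$ under the weak hypothesis $m-b>1$. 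The net result at level $k$ is a finite bound that decays like $|q|^{-(m-b-1)k}$ in the $q$-variable.

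The third and hardest step is \emph{summing the level-$k$ contributions}. Here one invokes the Arakelov inequality of \cite{GS}, in exactly the form used in \cite{LZ} to prove Theorem \ref{conv}, to control the uniformity-in-$k$ losses coming from the growing covolume of the lattice in $(W\otimes V)\cdot t^{-k}$ and from the dependence of Weil's estimate on the symplectic dimension $2n$. These losses are bounded by a factor of shape $e^{Cnk}$ for an absolute constant $C>0$, so the $k$-th level contributes at most a constant multiple of $e^{Cnk}\,|q|^{-(m-b-1)k}$. Summing in $k$ converges precisely when $(m-b-1)\log|q|>Cn$, equivalently $m-b>1+Cn/\log|q|$, which is the stated criterion. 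The main obstacle is making this uniformity-in-$k$ effective: Weil's classical estimates must be rearranged so that they apply simultaneously at all levels with the right exponential rate, and the Arakelov inequality is what makes the rearrangement possible. The final assertion (for $|q|$ large and $m\geq 3$) then follows immediately by choosing $|q|$ large enough to force $Cn/\log|q|<m-b-1$, reducing to the anisotropic case already handled in \cite{GZ2}.
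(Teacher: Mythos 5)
Your overall skeleton (reduce to $G'(F)\backslash G'(\bb{A})$ and then to a Siegel-set/torus integral, majorize $\phi$ by $q$-scaled Gaussians, invoke the Arakelov inequality of \cite{GS}) matches the paper's, but the central analytic step is organized in a way that does not work. Restricted to the torus, the majorized theta sum factors \emph{multiplicatively} over the levels: writing $r=\sum_k r_k t^{-k}$, one has $\sum_r \prod_k \phi_k(a r_k)=\prod_k\bigl(\sum_{r_k}\phi_k(a r_k)\bigr)$ inside the integral over $\Theta^+$. Your plan integrates each level separately over the Siegel set and then \emph{sums} the resulting contributions $e^{Cnk}|q|^{-(m-b-1)k}$; since each level factor is $\geq 1$, an additive decomposition of this kind gives at best a lower bound for the integral of the product and cannot serve as the required majorization. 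The per-level inputs are also not correct: at a single fixed level the sum over vectors in a weight-$\lambda$ eigenspace of the torus grows like a negative power of $\lambda(a)$ with exponent proportional to $n$ as $\lambda(a)\to 0$ (the $q$-scaling only shifts where the transition to this regime occurs, not the polynomial rate), so integrability against $|\Delta_P(a)|$ at one level already requires Weil's classical condition $m-b>n+1$, not the ``weak hypothesis $m-b>1$'' you assert; and the claimed geometric decay $|q|^{-(m-b-1)k}$ is nowhere derived. Consequently your argument has no mechanism for producing the $cn/\log|q|$ in the criterion, which is the whole content of the theorem.

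The paper avoids this by bounding the theta function \emph{pointwise} on $\Theta^+$, all levels at once. After reducing to a pure tensor with Gaussian and characteristic-function majorants, the sum over $\mathcal{O}_S$-points is bounded via \cite{GS} by $\prod_\lambda\bigl(\prod_{j\geq 1}(1+\beta\exp(-C_1\lambda(a)|q|^{2j/d}))\bigr)^{2nm_\lambda}$, and the elementary estimate $\prod_{j\geq 1}(1+\beta e^{-x\rho^j})=O(x^{-\beta/\log\rho})$ as $x\to 0$ shows that the theta function grows only like $\prod_\lambda\sup(1,\lambda(a)^{-d\beta/(2\log|q|)})^{2nm_\lambda}$. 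It is this exponent of size $O(n/\log|q|)$ --- far smaller than the classical exponent $n$ once $|q|$ is large --- that, fed a single time into Weil's comparison with the modular character, yields $m-b>cn/\log|q|+1$ with $c=d\beta$. You need some version of this all-levels-at-once pointwise estimate; a level-by-level integration over the Siegel set cannot substitute for it. (Your closing remark that the $m\geq 3$ case ``reduces to the anisotropic case'' is also off: it follows because $b\leq m/2$ forces $m-b-1\geq 1>cn/\log|q|$ for large $|q|$, not by any reduction to \cite{GZ2}.)
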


\begin{proof}
We give a sketch of the proof, applying the original arguments in \cite{W2} as well as some new ingredients from \cite{LZ} for the loop setting. Since $N(F)\backslash N(\bb{A})$ is compact and $G'(\mathbb{A}\langle t\rangle_+)=G'(\mathbb{A})\ltimes N(\mathbb{A})$,   we can reduce the integral \eqref{theta1} to the integral over $G'(F)\backslash G'(\mathbb{A})$ and  it suffices to estimate
\[
\int_{G'(F)\backslash G'(\bb{A})}\theta_\phi(g,h)dh.
\]
Let $P$ be a minimal parabolic subgroup of $G'$, and $T\subset P$ be a maximal split torus so that $T\cong \mathbb{G}_m^b$. Let $\Theta(\mathbb{G}_m)\cong \mathbb{R}_+^\times$ be the subgroup of $\mathbb{A}^\times$ which consists of ideles $a_\tau=(a_v)_v$ such that $a_v=\tau\in\mathbb{R}_+^\times$ at infinite places $v$ and $a_v=1$ at finite places. Denote by $\Theta(T)$ the image of $\Theta(\mathbb{G}_m)^b$ under the isomorphism $T\cong\mathbb{G}_m^b$. Let $T(\mathbb{A})^+$ be the set of $a\in T(\mathbb{A})$ such that $|\alpha(a)|_\mathbb{A}\geq 1$ for any positive root $\alpha$, and $\Theta^+=\Theta(T)\cap T(\mathbb{A})^+$. The reduction theory as in \cite{W2} shows that it suffices to estimate the integral
\begin{equation}\label{est0}
\int_{\Theta^+}\theta_\phi(g, a)\cdot |\Delta_P(a)|_\mathbb{A}da,
\end{equation}
where $\Delta_P$ is the modular character of $P$. Then the problem is reduced to the estimation of
\[
\theta_\phi(g,a)=\sum_{r\in \bb{X}_{F}}\omega(g,a)\phi(r), \quad a\in\Theta^+.
\]
Applying the Fourier-type arguments in \cite[Lemma 4.8]{Z1} and \cite{LZ}, we may drop the $\textrm{Aut}^0\mathbb{A}\langle t\rangle$-component from $\phi$ and further assume that
$\phi=(q_vt)_v\cdot\bigotimes\limits_v\phi_v\in \mathbb{A}^\times_{>1}t\cdot \mathcal{E}(\bb{X}_{\mathbb{A}})$ is a pure tensor.
Let $S=S_\rm{fin}\cup S_\infty$ be a finite set of places including all the infinite places and the finite places $v$ with $\phi_v\neq\phi_{v,0}$ or $|q_v|_v\neq 1$. We may assume that each $\phi_v$, $v\in S_\textrm{fin}$ is bounded by a multiple of the characteristic function $\phi_v'$ of $\varpi_v^{k_v}\cal{O}_v^{2N}[t^{-1}]t^{-1}$ for some integer $k_v$; each $\phi_v$, $v\in S_\infty$ is a translate of $\phi_{0,v}$, hence is bounded by a multiple of a fixed Gaussian function, which implies that
\[
\left((q_vt)\cdot\phi_v\right)(r)\leq C_\phi\cdot\exp\left(-C\sum^{2N}_{i=1}\sum^\infty_{j=1}|r_{ij}|_v^2 |q_v|_v^{2j}\right)
\]
for some positive constants $C$ (not depending on $\phi$) and $C_\phi$, where
\[
r=\left(\sum^\infty_{j=1}r_{ij}t^{-j}\right)_{i=1,\ldots, 2N}\in \bb{X}_{v}\cong t^{-1}F_v[t^{-1}]^{2N}.
\]
Define $\mathcal{O}_S=F\cap \left(\prod_{v\not\in S}\mathcal{O}_v\right)$. Let $\lambda$'s be the characters of $T$ by which it acts on the space $V$, each with multiplicity $m_\lambda$, so that $\sum_\lambda m_\lambda=\dim V=m$. Moreover for $a\in \Theta(T)$ and a character $\lambda$ of $T$, we write $\lambda(a)\in \mathbb{R}^\times_+$ via the identification  $\Theta(\mathbb{G}_m)\cong \mathbb{R}_+^\times$. Then combining the above, we see that $\theta_\phi(1,a)$ is bounded by a multiple of
\begin{align}\label{est}
& \sum_{r\in \bb{X}_{\mathcal{O}_S}}\prod_{v\in S_\textrm{fin}}\left((q_v t)\cdot\phi'_v\right)(r) \cdot\prod_{v\in S_\infty} \left((q_vt)\cdot\omega(a_v)\phi_v\right)(r)\\
\leq & \prod_\lambda \left(\sum_{r\in \mathcal{O}_S^{2N}[t^{-1}]t^{-1}}\prod_{v\in S_\textrm{fin}}\left((q_v t)\cdot\phi''_v\right)(r)\cdot\prod_{v\in S_\infty} \left((q_vt)\cdot\phi''_v\right)(\lambda(a)r)\right)^{2nm_\lambda},\nonumber
\end{align}
where $\phi''_v$, $v\in S_\textrm{fin}$ is the characteristic function of $\varpi_v^{k_v}\mathcal{O}_v^{2N}[t^{-1}]t^{-1}$, and $\phi''_v$, $v\in S_\infty$ is the Gaussian function such that
\[
\left((q_vt)\cdot\phi''_v\right)(r)=\exp\left(-C\sum^\infty_{j=1}|r_j|_v^2|q_v|^{2j}\right)
\]
for $r=\sum_j r_j t^{-j}\in \bb{X}_v$. As in \cite{LZ}, applying an equality which invokes Arakelov divisors due to van der Geer and Schoof \cite{GS},  we obtain that
(\ref{est}) is bounded by
\begin{equation}\label{est1}
\prod_\lambda \left(\prod^\infty_{j=1}\left(1+\beta \exp\left(-C_1 \lambda(a) |q|^{\frac{2}{d}j}\right)\right)\right)^{2nm_\lambda},
\end{equation}
where $d=[F:\mathbb{Q}]$, $\beta>0$ only depends on $C,F$, and $C_1>0$ depends on $C, F$ and $\phi$. As an elementary exercise, one can show that for constants  $\rho>1$
and $ \beta>0$, the function
\[
\vartheta(x):=\prod^\infty_{j=1}(1+ \beta e^{-x\rho^j}),\quad x>0
\]
grows at a rate  $O(x^{-\beta/\log \rho})$ as $x\to 0$. Applying this fact, (\ref{est1}) is bounded by a multiple of
\[
\prod_\lambda \sup \left(1, \lambda(a)^{-d \beta/2\log |q|}\right)^{2nm_\lambda}.
\]
Plugging in this estimate into the integral (\ref{est0}), one can easily deduce the desired convergence criterion with $c:=d\beta$, in a way similar to the finite dimensional case \cite{W2}.
\end{proof}

We remark that the upper bound $O(x^{-\beta/\log\rho})$ for the function $\vartheta(x)$ in the above proof may not be optimal, but one can show that in fact $\vartheta(x)$ is also bounded below by a multiple of $x^{-\beta'/\log\rho}$ for some constant $\beta'>0$ as $x\to 0$. Thus a more precise estimate for $\vartheta(x)$ does not affect the essence of the convergence criterion. Since we have established the polynomial growth of the theta function, the following result is immediate from the rapid decay of cusp forms.

\begin{proposition}\label{prop3.2}
The theta integral (\ref{thetaf}) is  absolutely convergent for any cusp form $f$ on $G'(\mathbb{A})$ and $\phi\in 
\mathcal{S}'(\mathbb{X_A})$.
\end{proposition}

As we mentioned previously, Theorem \ref{weil} has direct application to the loop Siegel-Weil formula. To this end we need to further impose certain conditions on the Schwartz function $\phi\in\mathcal{S}'(\bb{X}_{\mathbb{A}})_q$, following \cite{GZ2}.  Let us give the details for completeness. Recall from Section \ref{2.1} that for each place $v$ we have the local Heisenberg group $H_v$ acting on $\mathcal{S}(\bb{X}_v)$. For a finite place $v$, let $\varpi_v^{n_v}\mathcal{O}_v$ be the conductor of the local character $\psi_v$, and for some  integer $k$ which is large enough let
\[
\mathcal{S}_{v,k}=\left\{\phi\in \mathcal{S}(\bb{X}_v): \phi\textrm{ is invariant under }\varpi_v^k\mathcal{O}_v^{2N}((t))\subset H_v\right\}.
\]
Then $\phi\in\mathcal{S}_{v,k}$ can be identified with a function on $(\varpi_v^{n_v-k}\mathcal{O}_v/\varpi_v^{k}\mathcal{O}_v)^{2N}[t^{-1}]t^{-1}$. Under this identification, define
\[
\mathcal{S}_{v,k}^\textrm{fin}=\left\{\phi\in\mathcal{S}_{v,k}: \hat{\phi}\textrm{ has finite support}\right\},
\]
where $\hat{\phi}$ is the Fourier transform of $\phi$. Introduce a subset $\mathcal{E}(\bb{X}_{\mathbb{A}})^\textrm{fin}$ of $\mathcal{E}(\bb{X}_{\mathbb{A}})$, which consists of functions
$\prod_v\phi_v$ such that (1) $\phi_v=\phi_{0,v}$ for each infinite place $v$; (2) $\phi_v=\phi_{0,v}$ for almost all finite places $v$; (3) each remaining component $\phi_v$ is in
$\mathcal{S}_{v,k}^{\textrm{fin}}$ for some $k$. Then we define
\[
\mathcal{S}'(\bb{X}_{\mathbb{A}})^\textrm{fin}_q=(qt)\cdot \textrm{Aut}^0\mathbb{A}\langle t\rangle \cdot \mathcal{E}(\bb{X}_{\mathbb{A}})^\textrm{fin}.
\]

In \cite{GZ2}, it was shown that for $\phi\in \mathcal{S}'(\bb{X}_{\mathbb{A}})^\textrm{fin}_q$, the Eisenstein series
\[
E_\phi(g)=\sum_{\gamma\in G(F[[t]])\backslash G(F((t)))}(\omega(\gamma g)\phi)(0)
\]
converges absolutely when $m>6n+2$ and therefore is an automorphic function on $\widetilde{G}(\mathbb{A}\langle t\rangle)$. See also \cite{L} for the convergence range of a general loop Eisenstein series induced from maximal parabolic subgroups. The main result of \cite{GZ2} together with \cite{LZ} states that if $V$ is anisotropic with $m>6n+2$, then the Siegel-Weil formula
\[
E_\phi(g)=\theta_\phi(g)
\]
holds for any $\phi\in  \mathcal{S}'(\bb{X}_{\mathbb{A}})^\textrm{fin}_q$. Recall that $\theta_\phi(g)$ is the theta integral (\ref{theta1}).
We comment that $ E_{\omega ( h) \phi } ( g) =   E_{\phi  } ( g) $ and $ \theta_{\omega ( h) \phi } ( g) =   \theta_{\phi  } ( g) $ for $ h \in G'( \mathbb{A} \langle t\rangle_+)$ but not for $ h \in  G'( \mathbb{A} \langle t\rangle)$,
which is very different from the classical case.
  Since we have solved the convergence issue for $\theta_\phi(g)$, the methods in \cite{GZ1, GZ2} extend to general orthogonal groups and thus strengthen the Siegel-Weil formula as follows.

\begin{theorem} \label{swf}
Assume that $m>6n+2$, and that either $V$ is anisotropic or $m-b> cn/\log|q|+1$, where $c$ is the constant in Theorem \ref{weil}. Then for $\phi\in\mathcal{S}'(\bb{X}_{\mathbb{A}})_q^\mathrm{fin}$ one has
\[
E_\phi(g)=\theta_\phi(g),
\]
where both sides are absolutely convergent.
\end{theorem}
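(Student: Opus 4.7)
The plan is to revisit the proof of the loop Siegel-Weil formula given in \cite{GZ1, GZ2} and verify that the anisotropy assumption made there can be weakened to the hypotheses of the present theorem, now that Theorem \ref{weil} supplies the required convergence of the theta integral. The anisotropy condition was used in \cite{GZ2} only to ensure compactness of $G'(F)\backslash G'(\bb{A})$; this compactness provided both the absolute convergence of $\theta_\phi(g)$ and the legitimacy of various Fubini-type rearrangements on the theta side. Both roles can now be taken over by Theorem \ref{weil}, together with the pointwise polynomial-growth bound on $\theta_\phi(g,h)$ that emerges from its proof.

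Concretely, I would proceed in two steps. The first step is purely algebraic: one runs the comparison of $E_\phi(g)$ with $\theta_\phi(g)$ exactly as in \cite{GZ2}, decomposing the theta sum $\theta_\phi(g,h)=\sum_{r\in\bb{X}_{F}}\omega(g,h)\phi(r)$ into $G'(F)$-orbits on $\bb{X}_F$ and matching each orbital contribution with the corresponding piece in the unfolding of the Eisenstein series $E_\phi(g)$. The local intertwining computations of \cite{GZ2} are unaffected by whether $V$ is isotropic; they depend only on the algebraic data $(W,V)$ and on the Weil representation formulas of Section \ref{2.1}. The convergence of $E_\phi(g)$ itself already holds under $m>6n+2$, with no anisotropy input.

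The second step is analytic: one must justify the interchange of summation over orbits with the integration over $G'(F)\backslash G'(\bb{A})$, as well as the absolute convergence of each individual orbital integral. For this I would reapply the reduction theory and the Arakelov-divisor estimate used in the proof of Theorem \ref{weil}. On the Siegel set $\Theta^+$ one has the bound
\[
|\theta_\phi(g,a)|\,|\Delta_P(a)|_\bb{A}\ \leq\ C\cdot \prod_\lambda\sup\bigl(1,\lambda(a)^{-d\beta/2\log|q|}\bigr)^{2nm_\lambda}\cdot|\Delta_P(a)|_\bb{A},
\]
whose integral over $\Theta^+$ is finite precisely when $m-b>cn/\log|q|+1$, with $c=d\beta$ as in Theorem \ref{weil}. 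This provides the dominated-convergence input needed to legitimize the orbit decomposition and the interchange of sum with integral that appear in the \cite{GZ2} comparison.

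The main obstacle, as in the finite-dimensional Siegel-Weil story, lies in this second step: once $V$ is isotropic the stabilizers of individual vectors are no longer compact, so each orbital theta integral must be controlled uniformly by an analogue of Weil's criterion rather than by a triviality. Once this uniform control is in place, the remaining computations of \cite{GZ1, GZ2}—the matching of Fourier coefficients orbit by orbit, and the residue of the Weil index calculation—transfer verbatim and produce the identity $E_\phi(g)=\theta_\phi(g)$ in the stated range.
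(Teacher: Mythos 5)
Your proposal is correct and follows essentially the same route as the paper, which likewise deduces the theorem by observing that the anisotropy hypothesis in \cite{GZ1, GZ2} served only to guarantee convergence of the theta integral, a role now played by Theorem \ref{weil}. The extra detail you supply about dominating the orbit-by-orbit comparison on the Siegel set is a reasonable elaboration of the same argument rather than a different method.
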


Finally we remark that in order to extend the Siegel-Weil formula beyond the convergence range, on one hand one has to establish the meromorphic continuation of loop Eisenstein series, which essentially should follow from Garland's work on the Maass-Selberg relations; on the other hand one has to regularize the theta integral on loop orthogonal groups, for which we expect that the affine Satake isomorphism and the Hecke algebras for $p$-adic loop groups studied in \cite{BKP} should play an important role. However according to the last assertion of Theorem \ref{weil}, for $|q|$ large enough the theta integral is always absolutely convergent (excluding the case $m=2$ and $V$ isotropic), which suggests that the Eisenstein series should have meromorphic continuation at least for large $|q|$. Hopefully these issues will be addressed somewhere else.

\section{Rallis constant term formula: anisotropic case}\label{s4}

In this and the next section, we compute the constant terms of the theta lifting $\theta_\phi^f(g)$ along the unipotent radicals of various standard parabolic subgroups of $\widetilde{G}(\mathbb{A}\langle t\rangle)$, which satisfy the so-called tower property. Denote the tower of symplectic groups by $G_j=\rm{Sp}_{2j}$.
Our main result implies that, if the first occurrence of the theta lift of $\pi$ to the tower $\widetilde{G}_{j}(\mathbb{A})$, $j\geq 0$ shows up at $\widetilde{G}_{n}(\mathbb{A})$, then the lift of $\pi$ to each $\widetilde{G}_j(\mathbb{A}\langle t\rangle)$, $j<n$ is {\it cuspidal} in the sense that  $\theta^f_\phi(g)$, $f\in\pi$ has zero constant term along the unipotent radical of every {\it standard} maximal parabolic subgroup. On the other hand the lift to $\widetilde{G}_{j}(\mathbb{A}\langle t\rangle)$ for $j\geq n$ is not cuspidal.

These are along a line parallel to the classical result that the first occurrence of theta lift of an irreducible cuspidal automorphic representation is cuspidal and irreducible. The classical tower property was observed by S. Rallis \cite{R1}. For results on the irreducibility of first occurrence, see \cite{M1, M2} for the case $(\textrm{Sp}_{2n}, \textrm{O}_{2m})$, \cite{JS} for the case $(\widetilde{\textrm{Sp}}_{2n}, \textrm{O}_{2m+1})$, and \cite{Wu} for the case of unitary groups.

In this section we consider the case that $V$ is anisotropic, and we handle the general case in the next section. Our presentations have been influenced by the approach in \cite{MVW} and \cite{Wu}. We shall begin with a description of parabolic subgroups of loop groups and their unipotent radicals, as well as their actions through Weil representations.

\subsection{Parabolic subgroups and unipotent radicals} Let us temporarily assume that $F$ is local. The global setting can be done similarly with only minor modifications. Recall that $W$ is a $2n$-dimensional symplectic space over $F$, and that $G=G_n=\rm{Sp}(W)$. Let $W=W^+\oplus W^-$ be a polarization so that $W^\pm$ are Lagrangian subspaces of $W$. The symplectic form $\langle,\rangle_F$ identifies $W^+$ with $W^{-*}$, the $F$-linear dual of $W^-$, and vice versa.
Let
\[
0=\ell_0^-\subset \ell_1^-\subset \ell_2^-\subset \cdots\subset \ell_n^-=W^-
\]
be a filtration of $W^-$ such that $\dim \ell_i^-=i$, $i=1,\ldots, n$. Then the stabilizer of this filtration is a Borel subgroup $B_o$ of $\textrm{Sp}_{2n}$. We shall also fix a filtration
\[
0=\ell_0^+\subset \ell_1^+\subset \ell_2^+\subset \cdots \subset \ell_n^+=W^+
\]
such that the symplectic form $\langle,\rangle_F$ identifies $\ell_i^+$ with the linear dual of $\ell_i^-$.

Recall the polarization $W((t))=X\oplus Y=W[t^{-1}]t^{-1}\oplus W[[t]]$. For the symplectic loop group $\widetilde{G}(F((t)))=\widetilde{\textrm{Sp}}_{2n}(F((t)))$, the standard Borel subgroup $B$ containing $B_o$ is defined as the subgroup which stabilizes the filtration
\[
tY\subset \ell_1^-+tY\subset \ell_2^-+tY\subset\cdots \subset \ell_n^-+tY.
\]
The subgroup stabilizes $\ell_i^-+tY$ is the maximal parabolic subgroup $P_i$ of $\widetilde{G}(F((t)))$, which corresponds to removing the node $\alpha_i$ from the Dynkin diagram (\ref{dynkin}), where $i=0,1,\ldots, n$. Then the Levi subgroup of $P_i$ is isomorphic to
$\widetilde{G}_{i}(F)\times G_{n-i}(F)$. More explicitly, note that $\ell_i^+\oplus t^{-1}\ell_i^-$ is a symplectic space by restriction of the symplectic form $\langle,\rangle$ given by (\ref{form}),  and $\widetilde{G}_{i}(F)$ is identified with the metaplectic cover of its symplectic group. The second factor $G_{n-i}(F)$ is identified with $\textrm{Sp}((\ell_i^-)^\perp/\ell_i^-)$.
We remark that from the symbol (\ref{symbol}) it is clear that the metaplectic loop group $\widetilde{G}(F((t)))$ splits over $\textrm{Sp}((\ell_i^-)^\perp/\ell_i^-)$ but does not split over
$\textrm{Sp}(\ell_i^+\oplus t^{-1}\ell_i^-)$. In particular $P_0\cong G(F[[t]])\times\mathbb{C}^\times$ splits.

To make the notations less cumbersome, from now on we fix one maximal parabolic subgroup $P$  and remove the index $i$. Then the previous discussion can be  reformulated slightly as follows. Let us assume that there is a decomposition
\[
W=\ell^+\oplus W_0\oplus \ell^-
\]
where $W_0$ is a symplectic subspace and $W_0^\perp=\ell^+\oplus \ell^-$ is a polarization. Let $P=P_a$ be the maximal parabolic subgroup of $\widetilde{G}(F((t)))$ which stabilizes $\ell^-\oplus tY$, where $a=\dim\ell^\pm\leq n$. Then the Levi subgroup of $P$ is
\begin{equation}\label{levi}
L_P=\widetilde{\textrm{Sp}}(W_\ell)\times \textrm{Sp}(W_0),
\end{equation}
where
\begin{equation}\label{wl}
W_\ell:=\ell^+\oplus t^{-1}\ell^-.
\end{equation}
If we decompose $\ell^-\oplus tY$ as
\begin{equation}\label{decom}
t\ell^+[[t]]\oplus tW_0[[t]]\oplus \ell^-[[t]]
\end{equation}
and let $P$ act from the right, then the unipotent radical $U=U_P$ of $P$ has a triangular decomposition
\begin{equation}\label{tri}
U=U^+U^0U^-,
\end{equation}
where
\begin{align}
\label{u+} & U^+=\left\{ n^+(\mu,\beta)=\begin{pmatrix} 1 & \mu & \beta-\mu\mu^*/2  \\ 0 & 1 & -\mu^* \\ 0 & 0 & 1 \end{pmatrix}: \begin{array}{l}\mu\in \textrm{Hom}_F( \ell^+, W_0)[[t]] \\ \beta\in \textrm{Hom}_F(\ell^+, \ell^-)[[t]] \\\beta+\beta^*=0\end{array}  \right\},\\
\label{u-} & U^-=\left\{ n^-(\nu,\gamma)=\begin{pmatrix} 1 & 0 & 0  \\ \nu & 1 & 0 \\ \gamma-\nu^*\nu/2 & -\nu^* & 1 \end{pmatrix}: \begin{array}{l}\nu\in \textrm{Hom}_F(W_0, \ell^+)[[t]]t \\ \gamma\in \textrm{Hom}_F(\ell^-, \ell^+)[[t]]t^2 \\ \gamma+\gamma^*=0\end{array}  \right\},\\
\label{u0} & U^0=\left\{ n^0(\alpha, \delta)=\begin{pmatrix} \alpha & 0 & 0 \\ 0 & \delta & 0 \\ 0 & 0 & \alpha^{*-1}\end{pmatrix}: \begin{array}{l} \alpha\in N_{\textrm{GL}(\ell^+[[t]])}  \\
\delta\in N_{\textrm{Sp}(W_0[[t]])}\end{array}\right\}.
\end{align}

Let us explain the meaning of these notations, and give some remarks.
In the above, $\mu^*\in\textrm{Hom}_F(W_0,\ell^-)[[t]]$ is the dual map of $\mu$ such that
\[
\langle t^{-n} w, x\cdot\mu\rangle=\langle w\cdot\mu^*, t^{-n}x\rangle
\]
for any $w\in W_0$, $x\in \ell^+$ and $n>0$. Apparently the last equation determines $\mu^*$ uniquely, thanks to the non-degeneracy of the symplectic paring. The maps $\beta^*$, $\gamma^*$, $\nu^*$ and $\alpha^*$ are interpreted similarly. Note that the conditions for $\beta$ and $\gamma$ can be also rephrased as
\begin{equation}\label{sym}
\beta\in \left(\textrm{Sym}^2\ell^-\right)[[t]],\quad \gamma\in \left( \textrm{Sym}^2\ell^+\right)[[t]]t^2.
\end{equation}
The group $\textrm{GL}(\ell^+[[t]])$ resp.
$\textrm{Sp}(W_0[[t]])$ is the $F[[t]]$-points of $\rm{GL}(\ell^+)$ resp. $\rm{Sp}(W_0)$, and $N_{\textrm{GL}(\ell^+[[t]])}$ resp. $N_{\textrm{Sp}(W_0[[t]])}$ is the unipotent radical, which consists of elements congruent to 1 modulo $t$.
The above description of $U$, and in particular the conditions on the order of $t$ in (\ref{u+}) and (\ref{u-}), can be verified using the fact that elements of $U$  are symplectic $F((t))$-linear maps which stabilize not only $\ell^-\oplus tY$ but also $(\ell^-\oplus tY)^\perp=\ell^+[[t]]\oplus t^{-1}W_0[[t]]\oplus t^{-1}\ell^-[[t]]$. The details are basically linear algebra calculations, which are left as exercises to the interested readers.  We should mention that thanks to a general PBW argument, the decomposition of $U$ into above subgroups can be given in any prescribed order, and is unique once the order is fixed. This fact will be implicitly used in our manipulation of constant terms below without further comments.
One last remark is that we have regarded $U$ as a subgroup of $P$, which is legitimate because the metaplectic cover splits over $U$.

\subsection{The action of unipotent radical} We continue to assume that $F$ is local.
As before we use the polarization
\[
\bb{W}((t))=\bb{X}\oplus \bb{Y}=\bb{W}[t^{-1}]t^{-1}\oplus \bb{W}[[t]],
\]
and let $\phi\in \mathcal{S}(\bb{X}_{F})$. Apparently $c_u=0$ for each $u\in U$ and  we recall  that by (\ref{gamma0}),
\[
\omega(u)\phi(r)=\psi\left(\frac{1}{2}\langle r \alpha_u, r b_u\rangle\right)\cdot \phi(ra_u),\quad r\in \bb{X}_{F}.
\]
To spell it out, if we write
$r=\sum_i w_i\otimes v_i$, where $w_i\in W[t^{-1}]t^{-1}$ and $v_i\in V$, then
\[
\omega(u)\phi(r)=\psi\left(\frac{1}{2}\sum_{i,j}\langle w_i a_u, w_j b_u\rangle (v_i, v_j)\right)\cdot \phi\left(\sum_i w_i a_u\otimes v_i\right).
\]
For later use let us further specify the actions of various elements in the unipotent radical $U$.
\begin{proposition} \label{uact}
Write $r=x+y+z\in \bb{X}_{F}$, where $x\in \ell^+[t^{-1}]t^{-1}\otimes V$, $y\in W_0[t^{-1}]t^{-1}\otimes V$ and $z\in \ell^-[t^{-1}]t^{-1}\otimes V$. Then we have
\begin{align*}
& \omega(n^+(\mu,0))\phi(r)= \psi\left( \langle y, x\mu\rangle+\frac{1}{2}\langle x\mu, p_{\bb Y}(x\mu)\rangle\right)\cdot \phi\left(r+p_{\bb X}(x\mu-x\mu\mu^*/2-y\mu^*)\right),\\
& \omega(n^+(0,\beta))\phi(r)=\psi\left(\frac{1}{2}\langle x, x\beta\rangle\right)\cdot \phi(r+p_{\bb X}(x\beta)),\\
& \omega(n^-(\nu,0))\phi(r)= \psi\left(\langle z, y\nu\rangle+\frac{1}{2}\langle z\nu^*, p_{\bb Y}(z\nu^*)\rangle\right)\cdot \phi\left(r+p_{\bb X}(y\nu-z\nu^*\nu/2-z\nu^*)\right),\\
& \omega(n^-(0,\gamma))\phi(r)=\psi\left(\frac{1}{2}\langle z, z\gamma\rangle\right)\cdot \phi(r+p_{\bb X}(z\gamma)),\\
& \omega(n^0(\alpha,1))\phi(r)= \psi\left(\frac{1}{2}\langle x\alpha, p_{\bb Y}(z\alpha^{*-1})-p_{\bb X}(z\alpha^{*-1})\rangle\right)\cdot\phi\left(y+p_{\bb X}(x\alpha+z\alpha^{*-1})\right),\\
& \omega(n^0(1,\delta))\phi(r)=\psi\left(\frac{1}{2}\langle p_{\bb X}(y\delta) , p_{\bb Y}(y \delta )\rangle\right)\cdot \phi\left(x+z+p_{\bb X}( y\delta )\right),
\end{align*}
where $p_{\bb X}$ and $p_{\bb Y}$ are the natural projections from $\bb{W}((t))$ to ${\bb X}$ and ${\bb Y}$.
\end{proposition}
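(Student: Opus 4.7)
The plan is to apply formula (\ref{gamma0}) directly to each of the six types of element in the triangular decomposition $U = U^+U^0U^-$. For every $u \in U$ the block matrix in (\ref{u+})--(\ref{u0}) has vanishing $c_u$-block, so
\[
\omega(u)\phi(r) \;=\; \psi\bigl(\tfrac{1}{2}\langle ra_u, rb_u\rangle\bigr)\,\phi(ra_u), \qquad ra_u = p_{\bb X}(ru), \quad rb_u = p_{\bb Y}(ru),
\]
and the problem reduces to computing the right action $r \mapsto ru$ in $\bb W((t))$ and splitting the result along the polarization $\bb W((t)) = \bb X \oplus \bb Y$.

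Writing $r = x+y+z$ adapted to $\bb W = (\ell^+\oplus W_0\oplus\ell^-)\otimes V$, the matrix form of each $u$ yields $ru$ immediately; for instance $r\,n^+(\mu, 0) = x + (y+x\mu) + (z - y\mu^* - \tfrac{1}{2}x\mu\mu^*)$. The old summands $x,y,z$ already sit in $\bb X$, while the new terms $x\mu$, $y\mu^*$, $x\mu\mu^*$ are general elements of $\bb W((t))$ whose $\bb X$-parts combine with $r$ to give the stated formula for $ra_u$. To reduce the character $\tfrac{1}{2}\langle p_{\bb X}(ru), p_{\bb Y}(ru)\rangle$ to the form written in the proposition I would use three observations: the decomposition $W = \ell^+\oplus W_0\oplus \ell^-$ is orthogonal except for the $\ell^+$--$\ell^-$ and $W_0$--$W_0$ pairings, which kills most cross-terms; the Lagrangian identities $\langle \bb X,\bb X\rangle = 0 = \langle \bb Y,\bb Y\rangle$ let one replace $\langle y, p_{\bb Y}(x\mu)\rangle$ by $\langle y, x\mu\rangle$ and $\langle p_{\bb X}(x\mu), p_{\bb Y}(x\mu)\rangle$ by $\langle x\mu, p_{\bb Y}(x\mu)\rangle$; and the defining adjoint relation $\langle t^{-n}w, e\mu\rangle = \langle w\mu^*, t^{-n}e\rangle$, together with its analogues for $\nu^*, \beta^*, \gamma^*, \alpha^*$, folds residue pairings like $\langle x, y\mu^*\rangle$ into $\pm\langle y, x\mu\rangle$. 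The antisymmetry conditions $\beta+\beta^* = 0$ and $\gamma+\gamma^* = 0$ (equivalently (\ref{sym})) are exactly what collapse the remaining pieces into the tidy expressions $\tfrac{1}{2}\langle x, x\beta\rangle$ and $\tfrac{1}{2}\langle z, z\gamma\rangle$.

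The five remaining cases proceed by the same recipe, with the roles of $\ell^+$ and $\ell^-$ swapped in the $U^-$ cases, and with $ra_u$ becoming a genuine change of coordinates rather than a perturbation in the $U^0$ cases (the $W_0$-part is untouched by $n^0(\alpha,1)$, and the $\ell^\pm$-parts by $n^0(1,\delta)$). The only real obstacle is bookkeeping: one must fix sign conventions for the various adjoints once and for all, check that the $t$-order conditions imposed in (\ref{u+})--(\ref{u0}) (for example $\nu \in \mathrm{Hom}_F(W_0, \ell^+)[[t]]t$ and $\gamma \in \mathrm{Hom}_F(\ell^-, \ell^+)[[t]]t^2$) are exactly what is needed to keep every intermediate expression in $\bb W((t))$ and every projection $p_{\bb X}, p_{\bb Y}$ well defined, and verify that the correct cross-terms drop out. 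Once these conventions are fixed, the six identities follow by direct, mechanical substitution.
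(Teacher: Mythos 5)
Your proposal is correct and follows exactly the route the paper takes: the paper's entire proof is the remark that the formulas follow by a straightforward computation from the matrix forms (\ref{u+})--(\ref{u0}) together with the relation (\ref{rel}), which is precisely the substitution-and-bookkeeping scheme you describe (using $c_u=0$, formula (\ref{gamma0}), and the splitting of $ru$ along $\bb{X}\oplus\bb{Y}$).
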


The proof is straightforward, making use of (\ref{u+}--\ref{u0}) as well as the relation (\ref{rel}).

\subsection{Tower property} We now turn to the global setting.  %To reduce the notations, we often omit writing $(F)$ or $(\mathbb{A})$ for taking rational or adelic points, as long as the situation is self-explanatory and no confusion arises.
Let us give the main result of this paper and its proof assuming that $V$ is anisotropic. We have the following Rallis constant term formula for the theta lifting, from which the tower property follows.

\begin{theorem}\label{main}
For $\phi\in \mathcal{S}'(\bb{X}_{\mathbb{A}})$ and a cusp form $f$ on $G'(\mathbb{A})$, the constant term of the theta lifting $\theta^f_\phi(g)$ along $U$ is given by
\[
\int_{U(F)\backslash U(\bb{A})}\theta^f_\phi(ug)du=\int_{G'(F)\backslash G'(\bb{A})}\left(\sum_{r\in t^{-1}\ell^-\otimes_F V}\omega(g, h)\phi(r)\right)f(h)dh=\theta^{\ell, f}_{\omega(g)\phi}(1),
\]
where $\theta^{\ell}$ denotes the classical theta lifting from $G'$ to $\widetilde{\mathrm{Sp}}(W_\ell)$, and $\omega(g)\phi$ is interpreted as a Schwartz function in
$\mathcal{S}(t^{-1}\ell^-\otimes V(\mathbb{A}))$ by restriction.
\end{theorem}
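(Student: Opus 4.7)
I plan to substitute the definitions, apply Fubini to swap integrations, and then explicitly evaluate the inner constant term using Proposition~\ref{uact}. Since $V$ is anisotropic, $G'(F)\backslash G'(\bb{A})$ is compact and all the absolute convergence required is automatic (cuspidality of $f$ is not needed here), so freely interchanging integrals and summation gives
\[\int_{U(F)\backslash U(\bb{A})} \theta_\phi^f(ug)\,du = \int_{G'(F)\backslash G'(\bb{A})}\sum_{r\in\bb{X}_F}\Bigl[\int_{U(F)\backslash U(\bb{A})} [\omega(ug,h)\phi](r)\,du\Bigr]f(h)\,dh.\]
The task is then to evaluate the inner $U$-integral for each fixed $r$; I decompose $r = x + y + z$ along the splitting $\bb{X} = \ell^+[t^{-1}]t^{-1}\otimes V \oplus W_0[t^{-1}]t^{-1}\otimes V \oplus \ell^-[t^{-1}]t^{-1}\otimes V$.

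Using the triangular decomposition $U = U^+U^0U^-$ and the explicit formulas in Proposition~\ref{uact}, I would integrate the six unipotent parameters in a convenient order. The heart of the argument concerns the Heisenberg-type parameters $\beta \in (\mathrm{Sym}^2\ell^-)[[t]]$ and $\gamma \in (\mathrm{Sym}^2\ell^+)[[t]]t^2$, whose actions contribute pure characters $\psi(\tfrac{1}{2}\langle x,x\beta\rangle)$ and $\psi(\tfrac{1}{2}\langle z,z\gamma\rangle)$, linear in the parameter with coefficients quadratic in $x$ or $z$. A direct Fourier-coefficient calculation shows that if $x \neq 0$ with $x_{j_{\max}}$ the highest nonzero $t$-coefficient, then the coefficient of $\beta_{2j_{\max}-1}$ in $\langle x, x\beta\rangle$ is the Gram-type pairing built from $(,)_V$ applied to $x_{j_{\max}}$, hence nonzero by nondegeneracy of $(,)_V$, while this top-index $\beta_{2j_{\max}-1}$ does not appear in the accompanying shift $p_{\bb{X}}(x\beta)$ (the required source index would exceed $j_{\max}$). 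Character orthogonality on the compact adelic quotient of $(\mathrm{Sym}^2\ell^-)[[t]]$ then forces $x = 0$; an identical argument with $\gamma$ forces $z \in t^{-1}\ell^-\otimes_F V$, i.e.\ only the lowest $t$-power piece of $z$ survives. The cross-characters $\psi(\langle y,x\mu\rangle)$, $\psi(\langle z,y\nu\rangle)$ together with the loop-direction $\delta \in N_{\mathrm{Sp}(W_0[[t]])}$ similarly eliminate $y\neq 0$. Once $x=y=0$ and $z \in t^{-1}\ell^-\otimes V$, the remaining $\alpha \in N_{\mathrm{GL}(\ell^+[[t]])}$ acts by $\phi \mapsto \phi(p_{\bb{X}}(z\alpha^{*-1})) = \phi(z)$ on the minimal $t$-piece, so the $\alpha$-integration is trivial.

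After these reductions the inner $U$-integral equals $\sum_{r\in t^{-1}\ell^-\otimes_F V}[\omega(g,h)\phi](r)$. Since $t^{-1}\ell^-\otimes V$ is precisely the $\bb{X}$-Lagrangian of $W_\ell \otimes V$ with $W_\ell = \ell^+ \oplus t^{-1}\ell^-$, this sum is by construction the classical theta kernel for the dual pair $(G',\widetilde{\mathrm{Sp}}(W_\ell))$ evaluated at $(1,h)$ with Schwartz function $\omega(g)\phi$ restricted to the Lagrangian, and plugging back into the $h$-integral yields $\theta^{\ell,f}_{\omega(g)\phi}(1)$.

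The main obstacle is the Fourier-coefficient bookkeeping in the middle paragraph: for each nonzero configuration of $r$ one must exhibit a specific parameter of $U$ (a specific $t$-power component of $\beta$, $\gamma$, $\mu$, $\nu$, or $\delta$) against which the character is nontrivial and decoupled from the shift of $\phi$. This is a finite but potentially lengthy linear-algebra verification, and is the loop-theoretic analog of the orbital analysis that drives the classical finite-dimensional Rallis tower property; the new feature is that one must handle the infinitely many $t$-power components of each unipotent parameter.
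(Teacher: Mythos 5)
Your outline reproduces, essentially verbatim, the paper's proof of Theorem~\ref{main} \emph{in the anisotropic case} (Section~\ref{s4}: Lemmas~\ref{triv}--\ref{lemy}), but the theorem is stated for a general orthogonal space $V$, and your key step breaks down precisely when $V$ is isotropic. The character you integrate against the $\beta$-direction has coefficient the Gram matrix $\langle x_l, x_l\rangle_V=((v_i,v_j))$ of the top $t$-coefficient $x_l=\sum e_i\otimes v_i$; you assert this is nonzero ``by nondegeneracy of $(,)_V$,'' but nondegeneracy of the form on $V$ does not make the Gram matrix of an arbitrary nonzero tuple nonzero --- it is nonzero for every nonzero $x_l$ only when $V$ is \emph{anisotropic} (then some $(v_i,v_i)\neq 0$). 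If the $v_i$ span an isotropic subspace of $V$, the restriction of $\psi_r$ to the $\beta$- (and likewise $\gamma$- and $\delta$-) directions is trivial, character orthogonality yields nothing, and such $r\notin t^{-1}\ell^-\otimes_F V$ are \emph{not} eliminated by any unipotent character of $U$. The same defect affects your treatment of $y$ and $z$. So as written the argument proves only the anisotropic case, and your remark that ``cuspidality of $f$ is not needed'' is exactly the symptom: in the general case cuspidality is the essential input.

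The paper's Section~\ref{s5} supplies the missing mechanism. One decomposes the theta integral into $(U,G'(F[[t]]))$-orbital integrals, uses the affine Gra\ss mannian classification of $G'(F[[t]])$-orbits (Theorem~\ref{clas}, Lemma~\ref{pbruhat}, Corollary~\ref{rep}) to put representatives in the normal form (\ref{rg}), shows the ``negligible'' orbits (those with $\mathrm{Im}\,f_r\notin\mathrm{Im}\,P_{\mathrm{Gr}}$) already contribute zero (Lemma~\ref{lemerror}), and then, for a surviving $r\notin t^{-1}\ell^-\otimes_F V$, shows that the offending coefficients span an \emph{isotropic} subspace $V_-\subset V$ and that the $U$-averaged integrand is invariant under $N_Q(\mathbb{A})$ for $Q$ the parabolic of $G'$ stabilizing $V_-$; the vanishing then comes from the cuspidality of $f$, not from a character of $U$. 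None of this appears in your proposal. Two smaller points: (i) even in the anisotropic case you should start from the actual definition (\ref{thetaf}), with $h$ ranging over $G'(F[[t]])\backslash G'(\mathbb{A}[[t]])$, and use the $N(\mathbb{A})$-invariance of the surviving terms (Lemma~\ref{lemn}) to collapse to $G'(F)\backslash G'(\mathbb{A})$; (ii) once $x=0$ the cross-character $\psi(\langle y,x\mu\rangle)$ is trivial, so eliminating $y\neq 0$ must rest on the $\delta$-direction in $N_{\mathrm{Sp}(W_0[[t]])}$ together with anisotropy, as in Lemma~\ref{lemy}.
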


As usual, the integral defining the constant term converges because the integration is taken over a compact domain $U(F)\backslash U(\bb{A})$. Note that $t^{-1}\ell^-\otimes_F V$ is a Lagrangian subspace of $W_\ell\otimes_F V$, hence the above constant formula yields the required theta lifting $\theta^{\ell}$. Indeed by restriction to $g\in \widetilde{\mathrm{Sp}}(W_\ell)$, one has $\theta^{\ell, f}_{\omega(g)\phi}(1)=\theta^{\ell, f}_{\phi}(g)$.  Then we obtain the following immediate corollary.

\begin{corollary} \label{cor}
Let $\pi$ be an irreducible cuspidal automorphic representation of $G'(\mathbb{A})$. Assume that the first nonvanishing theta lifting of $\pi$ to $\widetilde{G}_j(\mathbb{A})$ occurs at $j=n$. Then the theta lifting of $\pi$ to each $\widetilde{G}_j(\mathbb{A}\langle t\rangle)$, $j<n$ is cuspidal, i.e. the functions $\theta^f_\phi(g)$, $f\in\pi$ have zero constant term along the unipotent radical of each standard proper parabolic subgroup.
\end{corollary}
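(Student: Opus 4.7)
The plan is to derive Corollary \ref{cor} as a direct consequence of Theorem \ref{main}. The argument splits into two moves: first a reduction from arbitrary standard proper parabolics to the maximal ones $P_0, \ldots, P_j$, then an invocation of the first-occurrence hypothesis via Theorem \ref{main}.

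For the reduction, I would observe that every standard proper parabolic subgroup $P'$ of $\widetilde{G}_j(\mathbb{A}\langle t\rangle)$ corresponds to a proper subset $S \subsetneq \{\alpha_0, \ldots, \alpha_j\}$ of affine simple roots; picking any $\alpha_a \notin S$ exhibits $P'$ as a subgroup of the maximal standard parabolic $P_a$ described in \S 4.1. Shrinking the Levi enlarges the unipotent radical, so $U_{P_a} \subset U_{P'}$, and the constant-term integral over $U_{P'}(F)\backslash U_{P'}(\mathbb{A})$ factors as an iterated integral whose innermost layer is the $U_{P_a}$-constant term of $\theta^f_\phi$ evaluated at a translate of $g$. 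It therefore suffices to show that the $U_{P_a}$-constant term vanishes for every $a \in \{0, 1, \ldots, j\}$.

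For this, Theorem \ref{main} supplies
\[
\int_{U_{P_a}(F)\backslash U_{P_a}(\mathbb{A})} \theta^f_\phi(u g)\, du = \theta^{\ell, f}_{\omega(g)\phi}(1),
\]
where the right-hand side is the value at the identity of the classical theta lift of $\pi$ from $G'(\mathbb{A})$ to $\widetilde{\mathrm{Sp}}(W_\ell)(\mathbb{A}) \cong \widetilde{G}_a(\mathbb{A})$, with $\omega(g)\phi$ interpreted by restriction as an element of $\mathcal{S}(t^{-1}\ell^- \otimes V(\mathbb{A}))$. Since $a \leq j < n$ and $n$ is the first occurrence of a nonvanishing classical theta lift of $\pi$ in the tower $\{\widetilde{G}_a(\mathbb{A})\}_{a \geq 0}$, this classical lift vanishes identically; hence the constant term along $U_{P_a}$ vanishes for every $f \in \pi$, every $\phi$, and every $g$, completing the proof of cuspidality.

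I anticipate no serious obstacle, since this is essentially a book-keeping consequence of Theorem \ref{main}. The only point that warrants a brief comment is the degenerate endpoint $a = 0$, corresponding to the ``finite-part'' maximal parabolic $P_0 \cong G_j(F[[t]])\times\mathbb{C}^\times$: here $W_\ell$ collapses to zero, the sum in Theorem \ref{main} reduces to the single term $r = 0$, and the classical object in question is the pairing of $f$ against $\omega(g,h)\phi(0)$ on $G'(F)\backslash G'(\mathbb{A})$, which is exactly the $\widetilde{G}_0(\mathbb{A})$-lift and is assumed to vanish under the hypothesis $n \geq 1$.
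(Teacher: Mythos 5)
Your proposal is correct and follows exactly the paper's intended route: the paper treats the corollary as immediate from Theorem \ref{main} via the identification of the $U_{P_a}$-constant term with the classical lift $\theta^{\ell,f}_{\omega(g)\phi}(1)$ to $\widetilde{G}_a(\mathbb{A})$ with $a\leq j<n$, which vanishes by the first-occurrence hypothesis. Your added details — the standard reduction from arbitrary proper standard parabolics to the maximal ones $P_a$ via normality of $U_{P_a}$ in $U_{P'}$, and the degenerate case $a=0$ — are correct fillings-in of steps the paper leaves implicit.
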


For the same reason, since the lift of $\pi$ to $\widetilde{G}_{n}(\mathbb{A})$, which is nonvanishing, occurs in the constant terms of the lifting to $\widetilde{G}_{j}(\mathbb{A}\langle t\rangle)$, $j\geq n$, we conclude that the latter is not cuspidal.
In the rest of this section we assume that $V$ is anisotropic and prove Theorem \ref{main}. In this case we do not need to use the cuspidality of $f$, except for the convergence issue which was already established in Section \ref{s3}. The proof of Theorem \ref{main} for general orthogonal groups will be presented in Section \ref{s5}.

Note that $U(\mathbb{A})$ acts on ${\bb X}_{\mathbb{A}}$ from the right through $u\mapsto a_u$. For $r\in {\bb X}_{F}$ let $U_r$ be the isotropy group of $r$ in $U$. Then it is easy to show that
\begin{equation}\label{psir}
\psi_r: u\mapsto \psi\left(\frac{1}{2}\langle ra_u, rb_u\rangle \right)=\psi\left(\frac{1}{2}\langle r, rb_u\rangle\right)
\end{equation}
is a character of $U_r(F)\backslash U_r(\bb{A})$, noting that $\psi$ is trivial on $F$. It follows that
\begin{equation}\label{ur}
\int_{U_r(F)\backslash U_r(\bb{A})}\omega(ug)\phi(r)du=\omega(g)\phi(r)\cdot \int_{U_r(F)\backslash U_r(\bb{A})}\psi_r(u)du.
\end{equation}
Recall that the measures on the compact quotients $U(F)\backslash U(\bb{A})$ and $U_r(F)\backslash U_r(\bb{A})$ are normalized such that the total volumes are 1. We begin with the contribution of $r\in t^{-1}\ell^-\otimes_F V$ to the constant term.

\begin{lemma} \label{triv}
If $r\in t^{-1}\ell^{-1}\otimes_F V$,  then $U_r=U$ and $\psi_r$ is trivial hence
\[
\int_{U(F)\backslash U(\bb{A})}\omega(ug)\phi(r)du=\omega(g)\phi(r).
\]
\end{lemma}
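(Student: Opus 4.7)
The plan is to reduce to a direct computation using the triangular decomposition $U = U^+ U^0 U^-$ from (\ref{tri}) and the action formulas in Proposition \ref{uact}. Writing $r = x + y + z$ as in that proposition and specializing to $r \in t^{-1}\ell^- \otimes_F V$ forces $x = 0$, $y = 0$, and $z = r$. I will verify for each of the six elementary generators $n^+(\mu,0)$, $n^+(0,\beta)$, $n^-(\nu,0)$, $n^-(0,\gamma)$, $n^0(\alpha,1)$, $n^0(1,\delta)$ that both the translation in the argument of $\phi$ and the scalar phase reduce to the trivial one; this simultaneously yields $U_r = U$ and $\psi_r \equiv 1$. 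The integral identity then follows by factoring $\omega(g)\phi(r)$ out of the integral and invoking the normalization $\mathrm{vol}(U(F)\backslash U(\bb{A})) = 1$.

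The three generators $n^+(\mu,0)$, $n^+(0,\beta)$, and $n^0(1,\delta)$ are immediate, since every summand in their translation and phase is built from $x$ or $y$, both of which vanish. The remaining three cases rely on the order-in-$t$ restrictions imposed in (\ref{u+}--\ref{u0}). For $n^-(\nu,0)$, the defining relation for the duality $\nu \mapsto \nu^*$ shows $\nu^* \in \mathrm{Hom}_F(\ell^-, W_0)[[t]]t$, so both $z\nu^*$ and $z\nu^*\nu$ lie in $W_0[[t]] \otimes V \subset \bb{Y}_F$; hence $p_{\bb{X}}$ kills the translation, and the phase $\tfrac{1}{2}\langle z\nu^*, p_{\bb{Y}}(z\nu^*)\rangle$ collapses to $\tfrac{1}{2}\langle z\nu^*, z\nu^*\rangle = 0$ by antisymmetry. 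For $n^-(0,\gamma)$, the condition $\gamma \in (\mathrm{Sym}^2\ell^+)[[t]]t^2$ places $z\gamma$ in $\ell^+[[t]]t \otimes V \subset \bb{Y}_F$, killing the translation; the $F((t))$-valued pairing $\langle z, z\gamma\rangle_{F((t))}$ is of nonnegative $t$-order, so its residue at $t^{-1}$ vanishes. For $n^0(\alpha,1)$, the condition $\alpha \in N_{\mathrm{GL}(\ell^+[[t]])}$ gives $\alpha \equiv 1 \pmod t$ and hence $\alpha^{*-1} \equiv 1 \pmod t$, so $z\alpha^{*-1} - z \in \ell^-[[t]]t \otimes V \subset \bb{Y}_F$ and $p_{\bb{X}}(z\alpha^{*-1}) = z$; the phase vanishes because $x = 0$.

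Assembling these six identities proves both assertions of the lemma, and the displayed formula follows at once. The only step requiring care is tracking which summands in the formulas of Proposition \ref{uact}, after multiplication by parameters with prescribed $t$-orders, are pushed into $\bb{Y}_F$ and thus annihilated by $p_{\bb{X}}$; this is bookkeeping rather than substantive difficulty, and no real obstacle is anticipated.
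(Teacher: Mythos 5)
Your proof is correct and follows essentially the same route as the paper: setting $x=y=0$ in Proposition \ref{uact}, using the $t$-order constraints in (\ref{u+})--(\ref{u0}) to show $p_{\bb X}$ annihilates the translations and the phases vanish, and then factoring $\omega(g)\phi(r)$ out of the volume-one integral. The only cosmetic slip is that $z\nu^*\nu$ lands in $\ell^+[[t]]t\otimes V$ rather than $W_0[[t]]\otimes V$, but since it still lies in $\bb{Y}_F$ the argument is unaffected.
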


\begin{proof}
Putting $x=y=0$, $z\in t^{-1}\ell^-\otimes_F V$ in Proposition \ref{uact}. Since $\nu^*\in\textrm{Hom}(\ell^-, W_0)[[t]]t$, $\gamma\in \textrm{Hom}(\ell^-,\ell^+)[[t]]t^2$, we have
\[
z\nu^*\in W_0[[t]]\otimes V, \quad z\nu^*\nu\in W_0[[t]]t\otimes V,\quad z\gamma\in \ell^+[[t]]t \otimes V
\]
Therefore
\[
p_X(z\nu^*)=p_X(z\nu^*\nu)=p_X(z\gamma)=0\quad\textrm{and}\quad \langle z, z\gamma\rangle=0.
\]
Since $\alpha^{*-1}\equiv 1\textrm{ mod } t$, we have $z\alpha^{*-1}\equiv z\textrm{ mod }  \ell^-[[t]]\otimes V$ hence $p_{\bb X}(z\alpha^{*-1})=z$. We deduce from these facts that for any $u\in U(\mathbb{A})$,
\[
ra_u=r, \quad \langle r, rb_u\rangle=0 \quad\textrm{hence}\quad \omega(ug)\phi=\omega(g)\phi.
\]
Therefore the lemma follows.
\end{proof}

Similarly we have the following result.

\begin{lemma} \label{lemn} If $r\in t^{-1}\ell^-\otimes_F V$, then $N_r=N$ and $\omega(n)\phi(r)=\phi(r)$ for $n\in N(\mathbb{A})$.
\end{lemma}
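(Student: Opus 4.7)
The plan is to mimic the reasoning of Lemma \ref{triv}, now exploiting that elements of $N(\bb{A})$ act on $\bb{W}((t))=W\otimes_F V((t))$ solely through the $V((t))$-factor and are congruent to the identity modulo $t$. A typical $n\in N(\bb{A})\subset G'(\bb{A}[[t]])$ can be written $n=1+\sum_{k\ge 1}t^k n_k$ with endomorphisms $n_k$ of $V_\bb{A}$. For $r=\sum_i t^{-1}w_i\otimes v_i$ with $w_i\in\ell^-$ and $v_i\in V$, the right action gives
\[
r\cdot n \;=\; \sum_i t^{-1}w_i\otimes n(v_i)\;=\; r+\sum_{i}\sum_{k\ge 0}t^k w_i\otimes n_{k+1}(v_i).
\]

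First I would read off the $\bb{X}$- and $\bb{Y}$-components: $r$ itself is the unique summand with a negative power of $t$, and every additional term $t^k w_i\otimes n_{k+1}(v_i)$ with $k\ge 0$ lies in $\bb{Y}=\bb{W}[[t]]$. Hence $ra_n=p_{\bb{X}}(rn)=r$ and $rb_n=p_{\bb{Y}}(rn)=\sum_{i,k\ge 0}t^k w_i\otimes n_{k+1}(v_i)$. The identity $ra_n=r$ already yields $N_r=N$. Moreover $n$ preserves $\bb{Y}$, so $c_n=0$ and the simple formula (\ref{gamma0}) applies with no integration to perform.

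Next, evaluate the phase. Using $\langle w\otimes v,w'\otimes v'\rangle_\bb{W}=\langle w,w'\rangle_W (v,v')_V$ extended to $F((t))$-coefficients,
\[
\langle r,rb_n\rangle_{F((t))}\;=\;\sum_{i,j,k\ge 0}t^{k-1}\langle w_i,w_j\rangle_W (v_i,n_{k+1}(v_j))_V.
\]
Since $w_i,w_j\in\ell^-$ and $\ell^-$ is Lagrangian in $W$, every $\langle w_i,w_j\rangle_W$ vanishes, so $\langle r,rb_n\rangle=\mathrm{Res}\,\langle r,rb_n\rangle_{F((t))}=0$. Plugging $ra_n=r$ together with this vanishing phase into (\ref{gamma0}) yields $\omega(n)\phi(r)=\psi(0)\,\phi(r)=\phi(r)$.

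There is no real obstacle; the lemma is a direct parallel of Lemma \ref{triv}, made simpler by the fact that $N$ acts only on the $V$-side and trivially on $W$. The one nuance to verify is that the congruence $n\equiv 1\bmod t$ forces $rn-r$ to lie entirely in $\bb{Y}$, after which both the isotropy claim and the vanishing of the phase reduce to the single input that $\ell^-$ is Lagrangian in $W$.
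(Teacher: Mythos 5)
Your proof is correct and follows essentially the same route as the paper: the congruence $n\equiv 1\bmod t$ forces $rn-r\in\bb{Y}$, so $ra_n=r$ gives $N_r=N$, and the phase $\langle r,rb_n\rangle$ vanishes because $rb_n$ lies in $\ell^-\otimes_F V[[t]]$ and $\ell^-$ pairs trivially with itself. One minor terminological slip: $\ell^-$ is an isotropic subspace of $W$ of dimension $a\le n$, not Lagrangian (unless $W_0=0$), but isotropy is all your computation actually uses.
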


\begin{proof}
The first assertion is clear. Thus  $ra_n=r$ for $n\in N$ and
\[
\omega(n)\phi(r)=\psi\left(\frac{1}{2}\langle r, rb_n\rangle\right)\cdot \phi(r).
\]
Since $rb_n\in \ell^-\otimes_F V[[t]]$, we have $\langle r, rb_n\rangle=0$ because $\ell^-$ is isotropic.
\end{proof}

Since $f(h)$ is invariant under $N(\mathbb{A})$ as well and the volume of $N(F)\backslash N(\bb{A})$ is 1, we have
\[
\int_{G'(F[[t]])\backslash G'(\bb{A}[[t]])}\sum_{r\in t^{-1}\ell^-\otimes_F V}\omega(g,h)\phi(r)\cdot f(h)dh=\int_{G'(F)\backslash G'(\bb{A})}\sum_{r\in t^{-1}\ell^-\otimes_F V}\omega(g,h)\phi(r)\cdot f(h)dh.
\]
Therefore by Lemma \ref{triv},
to prove Theorem \ref{main} amounts to prove that the contribution of each $r\not\in t^{-1}\ell^-\otimes_F V$ to the constant term is zero, and by (\ref{ur}) it suffices to show that if $r\not\in t^{-1}\ell^-\otimes_F V$, then $U_r$ and $\psi_r$ are non-trivial. We divide this task into three parts according to the summands of $X_{F}$, and we make use of the anisotropic assumption on $V$.

It is convenient to introduce a basis of $W$. Let $e_1,\ldots, e_a$ be a basis of $\ell^+$ and $f_1,\ldots, f_a$ be the dual basis of $\ell^-$ under $\langle,\rangle_F$, where $a=\dim\ell^\pm$. Also let $e_{a+1},\ldots e_n, f_{a+1},\ldots, f_n$ be a symplectic basis of $W_0$. Then we can realize $\frak{sp}(W)$, $\frak{sp}(W_0)$ etc. as matrix algebras using these basis.

\begin{lemma} \label{lemx}
If $r=x+y+z\in \bb{X}_{F}$ as in Proposition \ref{uact} and $0\neq x\in \ell^+[t^{-1}]t^{-1}\otimes_F V$, then $U_r$ and $\psi_r$ are non-trivial.
\end{lemma}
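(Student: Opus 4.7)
The plan is to use the abelian subgroup $\{n^+(0,\beta):\beta\in(\mathrm{Sym}^2\ell^-)[[t]]\}$ of $U^+$. By Proposition \ref{uact}, such an element acts on $\phi$ by $\psi(\tfrac12\langle x,x\beta\rangle)\cdot\phi(r+p_{\bb X}(x\beta))$, so $n^+(0,\beta)\in U_r$ exactly when $p_{\bb X}(x\beta)=0$, and in that case $\psi_r$ takes the value $\psi(\tfrac12\langle x,x\beta\rangle)$ on it. It therefore suffices to exhibit a single $\beta$ satisfying $p_{\bb X}(x\beta)=0$ and $\tfrac12\langle x,x\beta\rangle\notin\ker\psi$.

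I would fix the basis $e_1,\dots,e_a$ of $\ell^+$ with symplectic dual basis $f_1,\dots,f_a$ of $\ell^-$ and expand $x=\sum_{k,j\geq 1}e_k t^{-j}\otimes v^k_j$ with $v^k_j\in V$, almost all zero. Let $N:=\max\{j:v^k_j\neq 0\text{ for some }k\}$, which is well defined since $x\neq 0$, and fix an index $i_0$ with $v^{i_0}_N\neq 0$. The candidate I would try is
\[
\beta = c\,(f_{i_0}\otimes f_{i_0})\,t^{2N-1}\in(\mathrm{Sym}^2\ell^-)[[t]],
\]
with $c\in\bb A$ to be chosen. Under the identification $\mathrm{Sym}^2\ell^-\subset\mathrm{Hom}(\ell^+,\ell^-)$ induced by $\langle,\rangle_F$, this $\beta$ sends $e_{i_0}\mapsto cf_{i_0}t^{2N-1}$ and annihilates $e_k$ for $k\neq i_0$.

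The stabilizer condition then reduces to $t$-degree bookkeeping: the monomials appearing in $x\beta$ are of the form $t^{2N-1-j}$ with $1\leq j\leq N$, and the exponents satisfy $2N-1-j\geq N-1\geq 0$, so $x\beta\in\bb W[[t]]$ and $p_{\bb X}(x\beta)=0$. In particular $n^+(0,\beta)$ stabilizes $r$, and since it is nontrivial for $c\neq 0$, the group $U_r$ is nontrivial. Taking the residue in (\ref{form}), only terms with $k=k'=i_0$ (forced by the support of $\beta$) and $j+j'=2N$ (forced by the coefficient of $t^{-1}$) survive, and the pole bounds $j,j'\leq N$ pin these down to $j=j'=N$, giving $\langle x,x\beta\rangle=c\,(v^{i_0}_N,v^{i_0}_N)$.

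At this last step the anisotropy of $V$ enters decisively: since $0\neq v^{i_0}_N\in V$, one has $(v^{i_0}_N,v^{i_0}_N)\in F^\times$, and the nontriviality of $\psi$ on $\bb A/F$ guarantees a choice of $c\in\bb A$ for which $\tfrac{c}{2}(v^{i_0}_N,v^{i_0}_N)\notin\ker\psi$; for such $c$ one has $\psi_r(n^+(0,\beta))\neq 1$, proving that $\psi_r$ is nontrivial on $U_r(\bb A)$. The main bookkeeping obstacle I anticipate is the precise identification between $\mathrm{Sym}^2\ell^-$ and $\mathrm{Hom}(\ell^+,\ell^-)$ together with the residue computation from (\ref{form}); once these conventions are pinned down the argument collapses to a single $t$-degree count combined with the anisotropy of $V$.
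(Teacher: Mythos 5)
Your proposal is correct and follows essentially the same route as the paper: both exhibit elements $n^+(0,\beta)$ with $\beta\in t^{2N-1}\mathrm{Sym}^2\ell^-$ (the paper's $l$ is your $N$), verify $p_{\bb X}(x\beta)=0$ by the same degree count, reduce the residue to the top-order coefficient of $x$, and invoke anisotropy of $V$ to get a nonzero value of $\langle x,x\beta\rangle$. The only cosmetic difference is that you single out the rank-one tensor $f_{i_0}\otimes f_{i_0}$ and a scalar $c\in\bb A$, whereas the paper works with the full family $\beta_0\in\mathrm{Sym}^2\ell^-(\bb A)$ and the trace pairing against the Gram matrix $\langle x_l,x_l\rangle_V$ — the nontriviality in both cases comes from the same nonzero diagonal entry.
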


\begin{proof}
Let us write
$x=\sum^{l}_{k=1}t^{-k}x_k$, where $x_k\in \ell^+\otimes_F V$, $k=1,\ldots, l$ such that $x_l\neq 0$.
If
\[
x_l=\sum^a_{i=1} e_i\otimes v_i,\quad  v_i\in V,
\]
then at least one of $v_i$, $i=1,\ldots,a$ is nonzero. Since we have taken dual basis of $\ell^+$ and $\ell^-$, the set of linear maps $\beta_0:\ell^+\to \ell^-$ such that
$\beta_0+\beta_0^*=0$ can be identified with Sym$_a$, the set of symmetric $a\times a$ matrices, or more conceptually the symmetric square tensors Sym$^2\ell^-$. See also (\ref{sym}). Consider $\beta=
t^{2l-1}\beta_0\in t^{2l-1}\textrm{Sym}^2\ell^-(\bb{A})$. Then $x\beta\in t^{l-1}\ell^-[[t]]\otimes V(\bb{A})$ hence $p_{\bb X}(x\beta)=0$, so that $n^+(0,\beta)\in U_r(\bb{A})$ by Proposition \ref{uact}. It is easy to compute that
\[
\langle x, x\beta\rangle=\langle x_l, x_l\beta_0\rangle_\mathbb{A}=\textrm{Tr}(\langle x_l, x_l\rangle_V\cdot \beta_0),
\]
where $\langle x_l, x_l\rangle_V:=( v_i, v_j)_{a\times a}$. Since $V$ is anisotropic, $\langle x_l, x_l\rangle_V\neq 0$ so that
\[
\psi_r: n^+(0,\beta)\to \psi\left(\frac{1}{2}\langle x, x\beta\rangle\right)
\]
restricts to a non-trivial character on the arithmetic quotient of $ t^{2l-1}\textrm{Sym}^2\ell^-\subset U_r$, where we have identified $\beta$ with its image $n^+(0,\beta)\in U_r(\bb{A})$.
\end{proof}

Similarly we have the following result.

\begin{lemma} \label{lemz}
If $r=x+y+z\in \bb{X}_{F}$ as in Proposition \ref{uact} and $z\not\in t^{-1}\ell^-\otimes_FV$, then $U_r$ and $\psi_r$ are non-trivial.
\end{lemma}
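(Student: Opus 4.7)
The plan is to mirror the proof of Lemma \ref{lemx}, replacing the subgroup $\{n^+(0,\beta)\}\subset U^+$ with $\{n^-(0,\gamma)\}\subset U^-$ and again invoking the anisotropy of $V$. Write $z=\sum_{k=1}^{l} t^{-k}z_k$ with $z_k\in \ell^-\otimes_F V$ and $z_l\neq 0$. The hypothesis $z\not\in t^{-1}\ell^-\otimes_F V$ is equivalent to $l\geq 2$, and this is the crucial parity condition making the construction work: it allows me to take $\gamma=t^{2l-1}\gamma_0$ with $\gamma_0\in (\mathrm{Sym}^2\ell^+)(\bb{A})$, since the order constraint $\gamma\in \mathrm{Hom}_F(\ell^-,\ell^+)[[t]]t^2$ requires precisely $2l-1\geq 2$. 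By contrast, in Lemma \ref{lemx} the weaker order condition $[[t]]$ on $\beta$ permits the analogous ansatz for any $l\geq 1$, which is why $x\neq 0$ alone sufficed there.

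I would first verify that $n^-(0,\gamma)\in U_r(\bb{A})$ for this choice. A direct $t$-adic count shows $z\gamma\in t^{l-1}\ell^+[[t]]\otimes V\subset \bb{Y}_{\bb{A}}$, so $p_{\bb{X}}(z\gamma)=0$; combined with the fact that the formula in Proposition \ref{uact} for $n^-(0,\gamma)$ depends only on the $z$-component of $r$, this yields $ra_u=r$. Next I would compute the character $\psi_r(n^-(0,\gamma))=\psi(\frac{1}{2}\langle z,z\gamma\rangle)$. Taking the residue in $\langle z,z\gamma\rangle_{F((t))}=\sum_{k_1,k_2}t^{2l-1-k_1-k_2}\langle z_{k_1},z_{k_2}\gamma_0\rangle_F$ forces $k_1+k_2=2l$, and under $1\leq k_i\leq l$ only the pair $(l,l)$ survives, giving $\langle z,z\gamma\rangle=\langle z_l,z_l\gamma_0\rangle$. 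Expanding $z_l=\sum_i f_i\otimes w_i$ in the dual basis $\{f_i\}$ of $\ell^-$ and identifying $\gamma_0$ with a symmetric $a\times a$ matrix, this reduces to $\pm\mathrm{Tr}(\langle z_l,z_l\rangle_V\cdot\gamma_0)$, entirely parallel to the formula appearing in Lemma \ref{lemx}.

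Finally, since $V$ is anisotropic and $z_l\neq 0$, some $w_i\neq 0$, and anisotropy forces $(w_i,w_i)\neq 0$, so the symmetric Gram matrix $\langle z_l,z_l\rangle_V=(w_i,w_j)$ is nonzero. Hence $\gamma_0\mapsto \psi(\frac{1}{2}\mathrm{Tr}(\langle z_l,z_l\rangle_V\cdot\gamma_0))$ restricts to a nontrivial character on the arithmetic quotient of $t^{2l-1}\mathrm{Sym}^2\ell^+\subset U_r$, proving simultaneously that $U_r$ contains this subgroup nontrivially and that $\psi_r$ is nontrivial on it. The only genuinely new point relative to Lemma \ref{lemx} is matching the parity constraint on the powers of $t$ in $\gamma$ with the leading order $t^{-l}$ of $z$; once one sees that the extra $t^2$ in the definition of $U^-$ forces $l\geq 2$, everything else is a line-by-line translation via Proposition \ref{uact}.
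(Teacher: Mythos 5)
Your proposal is correct and follows essentially the same route as the paper, which likewise reduces Lemma \ref{lemz} to the argument of Lemma \ref{lemx} by using $n^-(0,\gamma)$ with $\gamma=t^{2l-1}\gamma_0\in t^{2l-1}\mathrm{Sym}^2\ell^+(\bb{A})$, checking the order condition via $l\geq 2$, computing $\langle z,z\gamma\rangle=\langle z_l,z_l\gamma_0\rangle$, and invoking anisotropy of $V$. The details you supply (the residue computation isolating the pair $(l,l)$ and the verification that $p_{\bb{X}}(z\gamma)=0$) are exactly the ones the paper leaves implicit.
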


\begin{proof}
The proof is similar to that of Lemma \ref{lemx}, and we shall only point out the necessary modifications. In this case we may write $z=\sum^l_{k=1}t^{-k}z_k$ with $z_l\neq 0$, $l\geq 2$.
Then by Proposition \ref{uact} one has $n^-(0,\gamma)\in U_r(\bb{A})$, where $\gamma\in t^{2l-1}\textrm{Sym}^2\ell^+(\bb{A})\subset \textrm{Sym}^2\ell^+(\bb{A})[[t]]t^2$, noting that $2l-1>2$. Again using that $V$ is anisotropic, one can show that
\[
\psi_r: n^-(0,\gamma)\mapsto \psi\left(\frac{1}{2}\langle z, z\gamma\rangle\right)
\]
is a non-trivial character on the arithmetic quotient of $ t^{2l-1}\textrm{Sym}^2\ell^+\subset U_r$.
\end{proof}

Finally, we consider the summand $W_0[t^{-1}]t^{-1}\otimes_F V$ of $\bb{X}_{F}$.

\begin{lemma}\label{lemy}
If $r=x+y+z\in \bb{X}_F$ as in Proposition \ref{uact} and $0\neq y\in W_0[t^{-1}]t^{-1}\otimes_F V$, then $U_r$ and $\psi_r$ are non-trivial.
\end{lemma}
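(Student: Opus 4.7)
The strategy parallels Lemmas \ref{lemx} and \ref{lemz}, but since the obstruction now lies in the ``Levi-type'' component $y$, the non-trivial character must come from elements of the middle subgroup $U^0$ rather than from the $\beta$ or $\gamma$ parts of $U^\pm$. A convenient feature is that the argument will work for arbitrary $x$ and $z$, without needing to reduce to the complementary case of Lemmas \ref{lemx}--\ref{lemz}.

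I would begin by writing $y=\sum_{k=1}^{l}t^{-k}y_k$ with $y_l\neq 0$, and decomposing $y_l=\sum_m w_m\otimes v'_m$ using linearly independent $w_m\in W_0$, so that some $v'_{m_0}\neq 0$. For $A\in \mathfrak{sp}(W_0)(F)$ to be chosen below, set $\delta(s)=\exp(sA\,t^{2l-1})\in N_{\mathrm{Sp}(W_0[[t]])}$ for $s\in\bb{A}$; the collection $\{\delta(s)\}_{s\in\bb{A}}$ is an abelian $\bb{A}$-parameter subgroup because $A$ commutes with itself. The $t$-exponents of $yA\,t^{2l-1}$ are $2l-1-k\geq l-1\geq 0$ for $1\leq k\leq l$, and those of the higher iterates $yA^j t^{j(2l-1)}$ are even larger, so $y\delta(s)-y\in W_0[[t]]\otimes V$ and hence $p_{\bb X}(y\delta(s))=y$. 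By Proposition \ref{uact} this gives $n^0(1,\delta(s))\in U_r(\bb{A})$ for every $s\in\bb{A}$.

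Next I compute the character. By (\ref{psir}) and the identification $rb_u=p_{\bb Y}(y\delta(s))=y(\delta(s)-1)$,
\[
\psi_r(n^0(1,\delta(s)))=\psi\!\left(\tfrac{1}{2}\langle x+y+z,\,y(\delta(s)-1)\rangle\right).
\]
Since $y(\delta(s)-1)\in W_0((t))\otimes V$ and $W_0^{\perp}=\ell^+\oplus\ell^-$ in $W$, the cross-pairings $\langle x,\,y(\delta(s)-1)\rangle_{\bb W}$ and $\langle z,\,y(\delta(s)-1)\rangle_{\bb W}$ both vanish, leaving $\tfrac{1}{2}\langle y,\,y(\delta(s)-1)\rangle$. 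A direct residue computation then shows the $t^{-1}$ coefficient of $\langle y,\,ysA\,t^{2l-1}\rangle_{F((t))}$ forces $i+j=2l$ with $1\leq i,j\leq l$, i.e.\ $i=j=l$; the higher iterates $yA^j t^{j(2l-1)}$ contribute nothing since they would require $i+j=j(2l-1)+1>2l$, which is impossible. Hence
\[
\psi_r(n^0(1,\delta(s)))=\psi\!\left(\tfrac{s}{2}\langle y_l,\,y_l A\rangle_{\bb W}\right).
\]

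To finish, I use the $F$-linear isomorphism $\mathfrak{sp}(W_0)\xrightarrow{\sim}\mathrm{Sym}^2(W_0^*)$ given by $A\mapsto\sigma_A(w,w'):=\langle w,\,w'A\rangle_{W_0}$, and choose $A\in\mathfrak{sp}(W_0)(F)$ with $\sigma_A(w_m,w_{m'})=\delta_{m,m_0}\delta_{m',m_0}$ in an extension of $\{w_m\}$ to a basis of $W_0$ (Kronecker deltas). Then
\[
\langle y_l,\,y_l A\rangle_{\bb W}=\sum_{m,m'}\sigma_A(w_m,w_{m'})\cdot(v'_m,v'_{m'})_V=(v'_{m_0},v'_{m_0})_V\in F^\times,
\]
where the non-vanishing uses the anisotropy of $V$ together with $v'_{m_0}\neq 0$. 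Consequently the character $s\mapsto\psi\bigl(\tfrac{s}{2}(v'_{m_0},v'_{m_0})\bigr)$ is non-trivial on $\bb{A}/F$, and $\psi_r$ restricts non-trivially to the arithmetic quotient of the $1$-parameter subgroup $\{n^0(1,\delta(s)) : s\in\bb{A}\}\subset U_r$. The main delicate point is calibrating the exponent $t^{2l-1}$: it is exactly what is needed to force $n^0(1,\delta(s))$ into $U_r$ while isolating the single residue pairing $\langle y_l, y_l A\rangle_{\bb W}$ that captures the anisotropy of $V$.
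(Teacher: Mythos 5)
Your proof is correct and follows essentially the same route as the paper: both exploit elements $\exp(\xi_0 t^{2l-1})$ of the congruence subgroup of $\textrm{Sp}(W_0[[t]])$, observe that they fix $r$ under $u\mapsto a_u$ because $y\delta\equiv y \bmod W_0[[t]]\otimes V$, and reduce the character to the single residue pairing $\langle y_l, y_l\xi_0\rangle$, whose non-vanishing for some $\xi_0$ comes from the anisotropy of $V$ via the identification $\frak{sp}(W_0)\cong\textrm{Sym}^2$. Your restriction to an explicit one-parameter subgroup with $\sigma_A$ a coordinate square is just a more concrete packaging of the paper's statement that $\xi_0\mapsto\textrm{Tr}(\langle y_l,y_l\rangle_V\,\xi_0 J)$ is a non-trivial linear functional.
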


\begin{proof}
Let us introduce the congruence subgroup of $\textrm{Sp}(W_0[[t]])$ of level $l\geq 1$,
\[
\textrm{Sp}(W_0[[t]])_l=\left\{g\in\textrm{Sp}(W_0[[t]]): g\equiv 1\textrm{ mod }  t^l\right\}
\]
so that the unipotent radical $N_{\textrm{Sp}(W_0[[t]])}=\textrm{Sp}(W_0[[t]])_1$. There is an exponential map
\begin{equation}\label{exp}
\exp: t^l\frak{sp}(W_0)\to \textrm{Sp}(W_0[[t]])_l,\quad \exp(\xi)=\sum^\infty_{i=0}\frac{\xi^i}{i!}
\end{equation}
which is a bijection. We may write $y=\sum^l_{k=1} t^{-k}y_k$, where $y_k\in W_0\otimes V$, $k=1,\ldots, l$ such that $y_l\neq 0$. Consider
$\delta=\exp(\xi)\in \textrm{Sp}(W_0[[t]])_{2l-1}(\bb{A})$, where $\xi=t^{2l-1}\xi_0\in t^{2l-1}\frak{sp}(W_0)(\bb{A})$ so that $\xi_0\in \frak{sp}(W_0)(\bb{A})$. Then $y\delta \equiv y\textrm{ mod }W_0[[t]]\otimes V(\bb{A})$ so that $p_{\bb X}(y\delta)=y$ and therefore
$\delta\in U_r$ by Proposition \ref{uact}. Write
\[
y_l=\sum^{n-a}_{i=1}e_{a+i}\otimes v_i +\sum^{n-a}_{i=1}f_{a+i}\otimes v_{n-a+i}.
\]
Then one can compute that
\[
\langle y, p_+(y\delta)\rangle=\langle y_l, y_l\xi_0\rangle_\mathbb{A} =\textrm{Tr}(\langle y_l, y_l\rangle_V\cdot\xi_0 \cdot J),
\]
where
\[
\langle y_l, y_l\rangle_V=(v_i, v_j)_{2(n-a)\times 2(n-a)},\quad J=\begin{pmatrix} 0 & -I_{n-a} \\ I_{n-a} & 0\end{pmatrix}.
\]
Again because $V$ is anisotropic, $\langle y_l, y_l\rangle_V\neq 0$ hence
\[
\psi_r: n^0(1,\delta)\mapsto \psi\left(\frac{1}{2}\langle y,  p_+(y\delta) \rangle\right)
\]
restricts to a  non-trivial character on the arithmetic quotient of $t^{2l-1}\frak{sp}(W_0)\subset U_r$, where we have identified $\xi\in t^{2l-1}\frak{sp}(W_0)(\bb{A})$ with its image $\delta=\exp(\xi)\in U_r(\bb{A})$.
\end{proof}

Combining Lemma \ref{triv}--\ref{lemy} finishes the proof of Theorem \ref{main}, under the assumption that $V$ is anisotropic.

\subsection{Variant and examples}

In the above proof of Theorem \ref{main} for the anisotropic case, we did not make use of the integral over $N$. It turns out that in this case it suffices to take the theta lifting from $G'(\bb{A})$ instead of $G'(\bb{A}[[t]])$. Namely, for a cusp form $f$ on $G'(\bb{A})$  and $\phi\in\mathcal{S}'({\bb X}_{\bb A})$ define
\begin{equation}\label{ftheta}
\tilde{\theta}^f_\phi(g)=\int_{G'(F)\backslash G'(\bb{A})}\theta_\phi(g,h)f(h)dh,\quad g\in \widetilde{G}(\bb{A}\langle t\rangle).
\end{equation}
It will be useful to introduce a notion of {\it level} for functions $\varphi\in \mathcal{E}(\mathbb{X}_{\mathbb{A}})$. By definition, any such $\varphi$ is invariant under a subgroup of the adelic Heisenberg group  of the form
\[
\prod_{v\not\in S, v<\infty}\mathcal{O}_v((t))^{2nm}\times \prod_{v\in S}\varpi_v^{k_v}\mathcal{O}_v((t))^{2nm},
\]
where $S$ is a finite set of finite places, and $k_v\in\mathbb{N}$, $v\in S$. Then we say that $\varphi$ is of level $\{k_v\}_{v\in S}$.

We have the following results.

\begin{proposition}\label{propf}
Assume that $V$ is anisotropic. Then the following hold.

(i) Theorem \ref{main} and Corollary \ref{cor} hold if one replaces $\theta^f_\phi(g)$ by $\tilde{\theta}^f_\phi(g)$ given by (\ref{ftheta}).

(ii) For any $q$ with $|q|$ large enough, there exists $ \varphi_q \in  \mathcal{E}({\bb X}_{\mathbb{A}})$  with level depending on $q$, such that the theta lifting $\tilde{\theta}^f_{\phi_q}(g)$ is nonzero for $\phi_q = ( q t ) \cdot \varphi_q $.
 \end{proposition}

\begin{proof}
The proof of (i) is identical to that of Theorem \ref{main} in the anisotropic case.
  To prove (ii), assume that the first nonvanishing theta lifting of $\pi$ to $\widetilde{G}_j(\mathbb{A})$ occurs at $j=n$. It is known from the classical tower property that the theta lifting of $\pi$ to each $\widetilde{G}_j(\bb{A})$, $j\geq n$ is nonzero.
Now fix $j>0$. Keep all the notations as before and assume that ${\bb W}=W\otimes_FV$ where $W$ is a symplectic space of dimension $2j$. Pick up $N>0$ such that $2jN>n$, and decompose ${\bb X}={\bb W}[t^{-1}]t^{-1}={\bb X}_1\oplus {\bb X}_2$, where
\[
{\bb X}_1={\bb W}t^{-1}+\cdots+{\bb W}t^{-N},\quad {\bb X}_2={\bb W}t^{-N-1}+{\bb W}t^{-N-2}+\cdots.
\]
Then there exist $\varphi_1\in \cal{S}({\bb X}_{1,{\bb A}})$ and $f\in \pi$ such that
\[
\theta^f_{\varphi_1}(e)=\int_{G'(F)\backslash G'({\bb A})}\theta_{\varphi_1}(h)f(h)dh\neq 0,
\]
 where
\[
\theta_{\varphi_1}(h)=\sum_{r\in {\bb X}_1}\omega(h)\varphi_1(r).
\]
Choose $\varphi_2\in \cal{E}({\bb X}_{2,{\bb A}})$ such that
 \[ \lim_{ |q| \to \infty } \theta_{ ( q t ) \varphi_2 } (h)=1 \]
where the convergence is uniform in $h$ over the orthogonal group.
 Put $ \varphi_q =  (q^{-1} t ) \varphi_1 \otimes  \varphi_2 $. Then $ (qt) \varphi_q = \varphi_1 \otimes (qt) \varphi_2 $,
 from which we see that
 \begin{equation}
\tilde{\theta}^f_{ (qt) \varphi_q}(e)    \to \theta^f_{\varphi_1}(e)\neq 0,\quad \rm{as} \quad |q|\to\infty.
\end{equation}
 Therefore $\tilde{\theta}^f_{(qt)\varphi_q}(g)\not\equiv 0$ for $|q|$ large.
\end{proof}

\begin{remark} (i) Recall that cuspidality for loop groups means that the constant terms along all {\it standard} maximal paraoblic subgroups are zero.
 Proposition \ref{propf} also applies for the function fields to construct nonzero cusp forms on loop groups.
 
 (ii) We can describe the level of $\varphi_q$ in Proposition \ref{propf} (ii) explicitly. Let $S_q$ be the set of finite places $v$ for which $q_v\not\in \mathcal{O}_v^\times$, and assume that 
 $\varphi_1\otimes\varphi_2$ in the proof of Proposition \ref{propf} is of level $\{k_v\}_{v\in S}$. Put $S_q'=S_q\cup S$. Then $\varphi_q=(q^{-1}t)\varphi_1\otimes \varphi_2$ is of level 
 \[
 \{k_v':=k_v+N\cdot |\textrm{val}_v(q_v)|\}_{v\in S_q'},
 \]
 where $\textrm{val}_v(\cdot)$ is the normalized valuation on $F_v^\times$, and we set $k_v=0$ for $v\in S_q\setminus S$.
 
 (iii) There is also an obvious notion of {\it level} for automorphic functions on loop symplectic groups, which can be seen below. It follows from the previous remark that the level of the cusp form $\tilde{\theta}^f_{\phi_q}(g)$ produced  in this way also depends on $q$. Indeed, assume that $\varphi_q$ is of level $\{k_v'\}_{v\in S_q'}$ as above. Recall that  
 $\psi=\bigotimes\limits_v\psi_v$ is the non-trivial additive character of $\mathbb{A}/F$ used to define the global Weil representation. Denote by $\varpi_v^{-l_v}\mathcal{O}_v$ the conductor of  $\psi_v$ for a finite place $v$. By enlarging $S_q'$ if necessary, we may assume that $v\not| 2$ and $\psi_v$ is unramified for any finite place  $v$ outside $S_q'$.  Then $\tilde{\theta}^f_{\phi_q}(g)$ transforms by a character under the action of
 \[
 \prod_{v\not\in S_q', v<\infty}K_v\times \prod_{v\in S_q'}K_{v, 2k'_v+l_v},
 \]
 where as before $K_v$ is the ``maximal compact subgroup" of $\widetilde{G}_j(F_v((t)))$, and $K_{v, 2k_v'+l_v}$ denotes the kernel of the composed projection
 \[
 K_v\longrightarrow G_j(\mathcal{O}_v((t)))\longrightarrow G_j\left((\mathcal{O}_v/\varpi_v^{2k_v'+l_v}\mathcal{O}_v)((t))\right).
 \]
 Moreover this character is trivial on $K_v$ for each finite place $v\not\in S_q'$, and also trivial on $K_{v, 2k_v'}$ if $v\in S_q'$, $v\not| 2$ and $\psi_v$ is unramified. 
 \end{remark}

Applying Proposition \ref{propf}, we now give two concrete examples from which we obtain nonzero cusp forms on the loop group of $\rm{SL}_2$.

\begin{ex}
The first one arises from the work of R. Howe and I.I. Piatetski-Shapiro \cite{H-PS}. Let $E/F$ be a quadratic extension, and $\beta$ be the quadratic form on $E$ given by the norm from $E$ to $F$, where we regard $E$ as a 2-dimensional space over $F$. Denote by $\textrm{O}_2(F)$ the isometry group of $\beta$, which is anisotropic. The multiplicative group of elements in $E^\times$ with norm 1 can be identified with $\textrm{SO}_2(F)$, and one has an exact sequence
\[
1\longrightarrow \textrm{SO}_2(F)\longrightarrow\textrm{O}_2(F)\longrightarrow\textrm{Gal}(E/F)\longrightarrow 1.
\]
The sign representation of $\textrm{O}_2(F)$ is defined to be the pull back of the sign representation of $\textrm{Gal}(E/F)\cong\mathbb{Z}/2\mathbb{Z}$. We also have the local analog of the above short exact sequence. It turns out that the non-connectedness of $\textrm{O}_2(F)$ plays an important role in the construction.

Let $\mathbb{A}$ and $\mathbb{A}_E$ be the adele rings of $F$ and $E$ respectively. The dual pair $(\textrm{Sp}_4(\mathbb{A}),\textrm{O}_2(\mathbb{A}))$ acts through the oscillator representation $\omega$ on the space $L^2(\mathbb{A}^2_E)$, and its smooth model $\omega^\infty$ is realized on the subspace $\mathcal{S}(\mathbb{A}^2_E)$ of Bruhat-Schwartz functions. We construct an automorphic character $\epsilon_S$ of $\rm{O}_2(\bb{A})$ as follows.
   Let $S$ be a finite set of places of $F$ with $ | S|$ even, and we assume that $S$ contains a non-split  place. Define
  $  \epsilon_S  =\bigotimes\limits_{v} \chi_v$, where $\chi_v$ is trivial for $v\not\in S$ and $\chi_v= \rm{sgn}_v$ for $v\in S$.  The assumptions on $S$ imply that
  \begin{itemize}
  \item $\epsilon_S$ is trivial on $\textrm{Gal}(E/F)$, so that it is an automorphic character of $\rm{O}_2(\bb{A})$;

  \item
      the theta lifting of $\epsilon_S$
    to $\textrm{SL}_2(\bb{A}) $ is zero and any nonzero lifting  to $\textrm{Sp}_4(\bb{A})$ is a cusp form.
    \end{itemize}

 It is known in \cite{H-PS} that there exists $\phi\in \mathcal{S}(\bb{A}_E^2)$ such that  $\theta_\phi^{\epsilon_S} (g) \not\equiv 0$.
Let us explain the construction and give a formula for $\theta_\phi^{\epsilon_S}(g) $.
  For $ v \notin S$, we take $\phi_v \in {\cal S} ( E_v^2 ) $ to be
  the characteristic function of $ {\cal O}_{E_v}^2 $, and for $ v \in S$ take $\phi_v $ to be a Bruhat-Schwartz function
  that transforms by the sign representation of $ \textrm{O}_2( F_v )$.
  Let $\phi = \bigotimes\limits_v \phi_v $, which is invariant under some open compact subgroup $H$ of  $\textrm{O}_2( {\Bbb A} )$. The integral
  \[  \theta_\phi^{\epsilon_S}  (g)  =  \int_{   \textrm{O}_2( F) \backslash   \textrm{O}_2( {\Bbb A} ) }        \sum_{ r \in E^2 } \omega ( g ) \phi  ( r h )  \epsilon_S ( h ) d h
\]
reduces to a finite sum. Indeed, since $\rm{O}_2(F)$ is anisotropic,  $  \textrm{O}_2( F ) \backslash   \textrm{O}_2( {\Bbb A} ) / H $ is finite and we write
\begin{equation}    \textrm{O}_2( {\Bbb A} ) \label{finite}=  \bigsqcup_{i=1}^l  \textrm{O}_2( F ) \alpha_i  H . \end{equation}
Then we have
\begin{equation}    \theta_\phi^{ \epsilon_S} ( g ) = \sum_{i=1}^l   \sum_{ r \in E^2 } \omega ( g ) \phi  ( r \alpha_i ) \epsilon_S ( \alpha_i  ) \end{equation}
One may choose $\phi_v$'s appropriately such that $\theta_\phi^{\epsilon_S}(g) \not\equiv 0$.

Define $\varphi_1\in \mathcal{S}(\mathbb{A}_E^2t^{-1})$ by $\varphi_1(xt^{-1})=\phi(x)$, $x\in \bb{A}_E^2$, and as in the proof of Proposition \ref{propf} put
\[
\varphi_q=(q^{-1}t)\varphi_1\otimes \varphi_2,
\]
where we fix $ \varphi_2\in \mathcal{S}({\mathbb{A}}_E^2 [ t ^{-1} ]   t^{-2}  )$ invariant under $H$, such that $\lim_{ |q| \to \infty } \theta_{ ( q t ) \varphi_2 } (h)=1$ with uniform convergence in $h\in \rm{O}_2(\bb{A})$.  It is easy to see that
   $ \varphi_q \in \mathcal{E}({\mathbb{A}}_E^2  [ t^{-1} ] t^{-1}  )$. We have the theta lifting
   \[ \
   \tilde{\theta}_{ (qt)\varphi_q }^{\epsilon_S } ( e)
    = \int_{   \textrm{O}_2(F )\backslash    \textrm{O}_2({\Bbb A} )}
        \sum_{ r_1  \in E^2 t^{-1} , r_2 \in E^2 [ t^{-1} ] t^2 } \varphi_1 ( r_1  h ) \cdot\left((qt)\varphi_2 \right)( r_2  h ) \epsilon_S ( h  ) d h .
   \]
Using (\ref{finite}), the above integral can be written as a finite sum
  \[  \tilde{\theta}_{ (qt)\varphi_q }^{\epsilon_S } ( e )    = \sum_{i=1}^l
        \sum_{ r_1  \in E^2 t^{-1} , r_2 \in E^2 [ t^{-1} ] t^2 } \varphi_1( r_1  \alpha_i )  \cdot \left(( q t ) \varphi_2\right) ( r_2  \alpha_i ) \epsilon_S ( \alpha_i  ).
   \]
Since $\sum_{   r_2 \in E^2 [ t^{-1} ] t^2 } \left((qt)\varphi_2 \right) ( r_2  \alpha_i ) \to 1$ as $|q|\to\infty$, we obtain that
 \[
 \lim_{|q|\to\infty} \tilde{\theta}_{ (qt)\varphi_q }^{\epsilon_S } ( e )=\sum_{i=1}^l \sum_{ r_1  \in E^2 t^{-1} }  \varphi_1(r_1\alpha_i)=\theta^{\epsilon_S}_\phi(e)\neq 0,
 \]
  which is the classical theta lifting.  Thus by Proposition \ref{propf},
  $\tilde{\theta}_{ (qt)\varphi_q}^{\epsilon_S } (g) $ is a nonzero cusp form on $\widetilde{\rm{SL}}_2(\bb{A}\langle t \rangle)$ for $|q|$ large enough.

Similar to a remark in \cite{H-PS}, for $F=\mathbb{Q}$ and $E$ an imaginary quadratic extension, some of the forms constructed above should be certain loop analogs of Siegel modular forms, which might be interesting and related to the theory of loop groups of Hilbert-modular type studied by H. Garland in \cite{G1}.
\end{ex}

\begin{ex}
Our second example is based on Yoshida's explicit construction of Siegel modular forms of genus 2 \cite{Y}, which uses the theta lifting from the orthogonal group of some quaternion algebras to $\rm{Sp}_4$. We briefly  recall this construction and refer the readers to \cite{Y} for more details. Let $D$ be a definite quaternion algebra over $\bb{Q}$ of discriminant $d^2\in \bb{Q}^{\times2}$, and $R$ be a maximal order of $D$. For $x\in D$ let $x^*$ be the main involution of $x$, and $N(x)=xx^*$, $Tr(x)=x+x^*$ be the reduced norm and trace respectively. Then $V:=(D,N)$ is a 4-dimensional quadratic space over $\mathbb{Q}$. Define
\[
G'=\{(a,b)\in D^\times\times D^\times: N(a)=N(b)=1\}
\]
which acts on $V$ from the right as isometries, by the formula $\rho(a,b)x=a^*xb$.  Let $X=V\oplus V$ and use the same letter $\rho$ for the diagonal action of $G'$ on $X$. Then $\mathcal{S}(X_{\bb A})$, where $\mathbb{A}={\bb A}_{\bb Q}$, provides the global Weil representation of the dual pair $(\rm{Sp}_4, G')$. As usual, for $\phi\in \mathcal{S}(X_{\bb A})$ and a cusp form $f$ on $G'(\bb{A})$ one has the theta lifting
\begin{equation}\label{sg}
\theta^f_\phi(g)=\int_{G'(\bb{Q})\backslash G'(\bb{A})}\sum_{r\in X}\omega(g,h)\phi(r)\cdot f(h)dh,\quad g\in\rm{Sp}_4(\bb{A}).
\end{equation}

The maximal order $R$ gives the subgroup
$K= K_f\times \bb{H}^\times$ of $D^\times_\bb{A}$, where $K_f=\prod_p R_p^\times$, $R_p=R\otimes_\bb{Z}\bb{Z}_p$ and $\bb{H}$ is the Hamilton quaternion algebra. Fix an embedding $\bb{H}^\times\hookrightarrow\rm{GL}_2(\bb{C})$, and for a nonnegative integer $m$ define a representation $\sigma_m:=\rm{Sym}^m\otimes N^{-m/2}$ of $\mathbb{H}^\times$, where $\rm{Sym}^m$ is the $m$-th symmetric power representation and $N$ is the reduced norm of $\bb{H}^\times$. Let $S(R,m)$ be the space of $\sigma_m$-valued automorphic forms on $D_\bb{A}^\times$ of $K$-type $1_{K_f}\otimes \sigma_m$ and trivial central character, which are also said to be of type $(R,\sigma_m,1)$ in \cite{Y}. Then $S(R,m)=0$ if $m$ is odd, hence one may assume that $m=2n$ is even. For a concrete realization, let $W_n$ be the space of polynomials $P$ on $\mathbb{H}$ such that $P(a+bi+cj+dk)=Q(b,c,d)$ where $Q$ is homogeneous of degree $n$ with complex coefficients, and $W_n^*$ be the subspace of $W_n$ which transforms according to $\sigma_{2n}$.

Assume from now on that $d$ is a prime number $p$. Define a congruence subgroup of $\rm{Sp}_4(\bb{Z})$,
\[
\widetilde{\Gamma}_0(p)=\left\{\begin{pmatrix}a & b \\ c & d\end{pmatrix}\in\rm{Sp}_4(\mathbb{Z}): c\equiv 0\mod p\right\}.
\]
Let $\widetilde{G}_k(\widetilde{\Gamma}_0(p))$ be the space of holomorphic functions on $\frak{H}_2$ of weight $k$ and level $\widetilde{\Gamma}_0(p)$, and $\widetilde{S}_k(\widetilde{\Gamma}_0(p))$ be the subspace of cusp forms, where $\frak{H}_2$ is the Siegel upper half space of genus 2. The results in \S5 and \S6 of \cite{Y} give the theta lifting from $S(R,0)\otimes S(R,2n)$ to $\widetilde{S}_{n+2}(\widetilde{\Gamma}_0(p))$ if $n>0$ and from $S(R,0)\otimes S(R,0)$ to $\widetilde{G}_2(\widetilde{\Gamma}_0(p))$
if $n=0$.

We explain this lifting in details. Let
$D^\times_\bb{A}=\bigcup^H_{i=1}D^\times y_i K$ be a double coset decomposition, where $H$ is the class number of $D$, such that $N(y_i)=1$ and $(y_i)_\infty=1$, $1\leq i\leq H$. Put $e_i=|D^\times\cap y_i K_fy_i^{-1}|$, $1\leq i\leq H$. For $1\leq i,j\leq H$ define a lattice $L_{ij}$ of $D$ by
$
L_{ij}=D\cap y_i \left(\prod_p R_p\right) y_j^{-1},
$
and for $P\in W_n^*$ define the theta series $\theta_{ij,P}(z)$, $z\in\frak{H}_2$ by
\[
\theta_{ij,P}(z)=\sum_{(x,y)\in L_{ij}^2}P(x^*y)\exp\left(2\pi i\cdot \rm{Trace}\left(\begin{pmatrix}
N(x) & Tr(xy^*)/2 \\
Tr(xy^*)/2 & N(y)
\end{pmatrix}
z\right)\right),
\]
where $L_{ij}^2:=L_{ij}\oplus L_{ij}$. For $f_1\otimes f_2\in S(R,0)\otimes S(R,2n)$ define the theta lifting
\[
\theta_P^{f_1\otimes f_2}(z)=\sum^H_{i,j=1}\langle \theta_{ij,P}(z),f_1(y_i)\otimes f_2(y_j)\rangle_{\sigma_{2n}}/e_ie_j,
\]
where $\langle\cdot,\cdot\rangle_{\sigma_{2n}}$ is the inner product on the representation space of $\sigma_{2n}$. Then $\theta^{f_1\otimes f_2}_P(z)\in \widetilde{G}_{n+2}(\widetilde{\Gamma}_0(p))$, and one recognizes that it is the reformulation of (\ref{sg}) in the language of classical modular forms. In particular the integral over  $G'(\bb{F})\backslash G'(\bb{A})$ boils down to a finite sum.

Under the assumptions of \cite[Theorem 7.7]{Y}, if $f_1\in S(R,0)$ is a nonzero eigenform of the Hecke operators $T'(l)$, $l\neq p$ (whose definition will not be recalled here),  then there exists an eigenform $f_2\in S(R,2n)$ of $T'(l)$, $l\neq p$ such that $\theta^{f_1\otimes f_2}_P$ is a nonzero cusp form in $\widetilde{S}_{n+2}(\widetilde{\Gamma}_0(p))$. For the application to loop groups, we only consider the case $n=0$ for simplicity. Then one may take $P=1$ and $\theta_{ij}(z):=\theta_{ij,1}(z)$. Assume that $H\geq 2$. Let $f_1,\ldots,f_H$ be a basis of $S(R,0)$ which are eigenforms of $T'(l)$, $l\neq p$ such that $f_1=1$ is the constant function. Then under the assumptions of \cite[Theorem 7.12]{Y}, there exists $f=f_k$ for some $k\geq 2$ such that
\begin{equation}\label{sg2}
\theta^{f}(z):=\theta_1^{1\otimes f}(z)=\sum^H_{i,j=1}\theta_{ij}(z)f(y_j)/e_ie_j
\end{equation}
is a nonzero cusp form in $\widetilde{S}_2(\widetilde{\Gamma}_0(p))$.

We now turn to the loop group setting. Following the idea in the proof of Proposition \ref{propf} (ii), we shall give some explicit construction of nonvanishing cusp forms on the loop group of $\rm{SL}_2$.  Identify $\mathbb{C}^2[[t]]$ with the complex linear dual of $\mathbb{C}^2[t^{-1}]t^{-1}$, and let $\widetilde{\frak{H}}\subset \rm{Hom}_\bb{C}(\bb{C}^2[t^{-1}]t^{-1}, \mathbb{C}^2[[t]])$ be the Siegel upper half space for $\rm{SL}_2(\bb{R}((t)))$ introduced in \cite{Z1}. Pick up $z_0\in\frak{H}_2$ such that $\theta^f(z_0)$ given by (\ref{sg2}) is nonzero. Take $q\in \mathbb{R}$, $q>1$ and define an element $\tilde{z}_{0,q}\in \widetilde{\frak{H}}$ by
\[
\tilde{z}_{0,q}: \bb{C}^2[t^{-1}]t^{-1}\to \mathbb{C}^2[[t]],\quad \sum_{n>0} x_n t^{-n}\mapsto \sum_{n>0} (x_n\cdot z_0) q^{n-1}t^{n-1},
\]
where $x_n\in\mathbb{C}^2$, $n<0$. Then the theta series
\[
\tilde{\theta}_{ij}(\tilde{z}):=\sum_{r\in L_{ij}^2[t^{-1}]t^{-1}}\exp\left(2\pi i\cdot \langle r, r\tilde{z}\rangle\right)
\]
converges absolutely for any $\tilde{z}\in \rm{SL}_2(\bb{R}((t)))\cdot \tilde{z}_{0,q}$, where the pairing $\langle\cdot,\cdot\rangle$ is induced from the  norm $N$ of $D$ and the pairing between $\bb{C}^2[t^{-1}]t^{-1}$ and $\mathbb{C}^2[[t]]$. In particular one has
\[
\tilde{\theta}_{ij}(\tilde{z}_{0,q})=\sum_{\sum_{n>0}(x_n,y_n)t^{-n}\in L_{ij}^2[t^{-1}]t^{-1}}\exp\left(2\pi i\sum_{n>0}q^{n-1}\rm{Trace}\left(\begin{pmatrix}
N(x_n) & Tr(x_ny_n^*)/2 \\
Tr(x_ny_n^*)/2 & N(y_n)
\end{pmatrix}
z_0\right)\right).
\]
Finally we define
\[
\tilde{\theta}^f(\tilde{z})=\sum^H_{i,j=1}\tilde{\theta}_{ij}(\tilde{z})f(y_j)/e_ie_j, \quad \tilde{z}\in \rm{SL}_2(\bb{R}((t)))\cdot \tilde{z}_{0,q},
\]
which can be regarded as an automorphic form on $\rm{SL}_2(\bb{R}((t)))\times \mathbb{R}_{>1}$, viewing $q>1$ as a parameter. Then by Proposition \ref{propf} and Yoshida's result, $\tilde{\theta}^f$ is a cusp form on $\rm{SL}_2(\bb{R}((t)))$ in the sense that it has zero constant term along each standard maximal parabolic subgroup. It is clear that
\[
\tilde{\theta}^f(\tilde{z}_{0,q})\to \theta^f(z_0)\neq 0\quad \rm{ as }\quad q\to\infty,
\]
hence $\tilde{\theta}^f(z)$ cannot be identically zero.
\end{ex}

This finishes the explicit constructions of two examples of nonzero cusp forms on the loop $\rm{SL}_2$. We end this section by briefly mentioning some other explicitly constructible examples. 

\begin{itemize}

\item In an ongoing joint work  of the second named author with Y. Chen, an example of type II theta lifting to the loop group of $\textrm{GL}_n$ is computed;

\item The Saito-Kurokawa lifting \cite{I, PS} can be realized in terms of  the theta lifting  from $\widetilde{\textrm{SL}}_2$ to 
$\textrm{PGSp}_4\cong \textrm{SO}_5$. By switching the roles of symplectic and orthogonal groups in  Corollary 
\ref{cor}, it should be possible to obtain cusp forms on the loop group of  $\textrm{SO}_3\cong \textrm{PGL}_2$.

\end{itemize}

\section{Rallis constant term formula: general case} \label{s5}

Now we treat a general orthogonal group $G'=\textrm{O}(V)$, in which case the proof is more involved. We shall apply some results about affine Gra\ss mannian and loop group orbits from \cite{GZ1, GZ2}.
The idea is that the theta integral (\ref{thetaf}) can be expressed as a sum of orbital integrals. For $r\in {\bb X}_{F}$ let $\mathcal{O}_r$ be the $G'(F[[t]])$-orbit of $r$. Let
$R$ be a set of representatives of  $G'(F[[t]])$-orbits in ${\bb X}_{F}$. Then we may write
\begin{align}\label{orbint}
\theta^f_\phi(g)& =\sum_{r\in R}\theta^{f,\mathcal{O}_r}_\phi(g):= \sum_{r\in R}\int_{G'(F[[t]])\backslash G'(\bb{A}[[t]])}\sum_{r'\in\mathcal{O}_r}\omega(g,h)\phi(r')\cdot f(h)dh\\
&=\sum_{r\in R} \int_{G'(F[[t]])_r\backslash G'(\mathbb{A}[[t]])}\omega(g,h)\phi(r)\cdot f(h)dh,\nonumber
\end{align}
where $G'(F[[t]])_r$ is the isotropy group of $r$ in $G'(F[[t]])$. Thus it is desirable to study the orbits of loop group action and the corresponding orbital integrals.

\subsection{Loop group orbits}

The classification of $G'(F[[t]])$-orbits (same as $G'(F\langle t\rangle_+)$-orbits) in ${\bb X}_{F}$ was given in \cite{GZ1}.
The space $\bb{X}_F$ can be viewed as an $F[[t]]$-module via the identification $\bb{X}_F\cong \bb{W}((t))/ \bb{Y}_F$, noting that $\bb{Y}_F=\bb{W}[[t]]$ is an $F[[t]]$-module. In particular $\bb{X}_{F}$ is torsion.

 Let $\textrm{Gr}(X_F)$ be the set of all $F[[t]]$-submodules of $X_F=W[t^{-1}]t^{-1}$ which are finite dimensional over $F$. Note that $g\in G(F[[t]])$ acts on $\textrm{Gr}(X_F)$ through its action on $X_{F}$, or equivalently through $g\mapsto a_g$ (see (\ref{matrix})). We will simply write $g$ for its action on $X_{F}$ and $\textrm{Gr}(X_F)$, instead of $a_g$.

 For $M\in \textrm{Gr}(X_F)$, define a map
 \begin{equation}\label{tm}
 T_M: M\otimes_F V, \quad \sum_i w_i\otimes v_i\mapsto \sum_{i,j}(v_i, v_j)w_i\otimes w_j\in S^2_t(M),
 \end{equation}
 where $(,)$ denotes the $F[[t]]$-valued bilinear form on $V[[t]]$ that extends the bilinear form $(,)$ on $V$, and $S^2_t(M)$ is the subspace of symmetric tensors in $M\otimes_{F[[t]]}M$. For $i\in S^2_t(M)$, write $U(i)_F$ for the variety with $F$-points the subset of $T_M^{-1}(i)$ that consists of points at which $T_M$ is submersive (see \cite[Lemma 5.7]{GZ1}), which might be empty.
An element
\begin{equation}\label{r}
r=\sum^l_{i=1} w_i\otimes v_i \in {\bb X}_F=X_F\otimes_F V
\end{equation}
defines an $F[[t]]$-map
\begin{equation}\label{fr}
f_r: V[[t]]\to X_F,\quad f_r(v)=\sum^l_{i=1}(v_i, v)w_i.
\end{equation}
 Then it is clear that $\textrm{Im }f_r\in \textrm{Gr}(X_F)$, and we write $T(r)=T_{\textrm{Im }f_r}(r)$ for short.

We have the following classification theorem.

\begin{theorem}\label{clas} \cite{GZ1}
The $G'(F[[t]])$-orbits in ${\bb X}_{F}$ are in one-to-one correspondence with the set of pairs $M\in\mathrm{Gr}(X_F)$, $i\in S^2_t(M)$ such that $U(i)_F\neq\emptyset$. The correspondence is given by $\mathcal{O}_r\mapsto (\mathrm{Im~}f_r, T(r))$.
\end{theorem}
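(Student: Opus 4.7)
My plan is to establish the bijection in three steps: well-definedness (with image landing in the claimed indexing set), surjectivity of the correspondence, and injectivity, the last of which reduces to a Witt-type extension theorem over $F[[t]]$ and constitutes the main obstacle.

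First, for well-definedness I will compute that $f_{r\cdot g}(v)=\sum_i(v_i\cdot g,v)w_i=f_r(v\cdot g^{-1})$ for $g\in G'(F[[t]])$, giving $\mathrm{Im}\,f_{r\cdot g}=\mathrm{Im}\,f_r$, while $T(r\cdot g)=T(r)$ by the orthogonal invariance of $(\cdot,\cdot)$. Setting $M=\mathrm{Im}\,f_r$, the element $r$ lies tautologically in $T_M^{-1}(T(r))$, and I will invoke \cite[Lemma~5.7]{GZ1}, whose content is that submersivity of $T_M$ at a point is equivalent to surjectivity of the associated map $f_{(\cdot)}\colon V[[t]]\to M$. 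Since the latter holds automatically for $r$ by the definition of $M$, we get $r\in U(T(r))_F$, so the pair is in the claimed indexing set. Surjectivity of the correspondence is then immediate: for any pair $(M,i)$ with $U(i)_F\neq\emptyset$, any $r\in U(i)_F$ satisfies $T_M(r)=i$ and $\mathrm{Im}\,f_r=M$ by the same equivalence, so $\mathcal{O}_r\mapsto(M,i)$.

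The core of the proof is injectivity. Suppose $r,r'$ both yield $(M,i)$; set $K=\ker f_r$ and $K'=\ker f_{r'}\subset V[[t]]$. Because $i$ is simultaneously the pushforward along $f_r$ and along $f_{r'}$ of the $F[[t]]$-bilinear extension of $(\cdot,\cdot)$, the canonical $F[[t]]$-module isomorphism $\bar g\colon V[[t]]/K\xrightarrow{\sim}V[[t]]/K'$ determined by $f_{r'}\circ\bar g=f_r$ (modulo the kernels) is an isometry with respect to the induced form coming from $i$. The problem then reduces to lifting $\bar g$ to a global isometry $g\in\mathrm{O}(V)(F[[t]])=G'(F[[t]])$ with $f_{r'}\circ g=f_r$; unwinding the definition of the action will then give $r\cdot g=r'$.

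The main obstacle is this lifting, which is a Witt-type extension theorem over the DVR $F[[t]]$. I plan to proceed by induction on the $t$-adic filtration. Modulo $t$ the classical Witt extension theorem over the field $F$ supplies a starting isometry $g_0\in\mathrm{O}(V)(F)$ matching $\bar g$ at the residue level. Passing from an approximate lift $g_N$ of $\bar g$ modulo $t^{N+1}$ to $g_{N+1}$ modulo $t^{N+2}$, the correction lives in the congruence kernel $\exp\!\bigl(t^{N+1}\mathfrak{o}(V)\bigr)$, and the equation to be solved is linear in $\mathfrak{o}(V)\otimes_F t^{N+1}F[[t]]/t^{N+2}$; it is solvable because the orthogonal group scheme is smooth over $F$ (characteristic zero), which is just the infinitesimal lifting criterion for smoothness. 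Taking the inverse limit of the $g_N$ yields the desired $g\in G'(F[[t]])$, completing the proof.
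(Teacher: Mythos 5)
This theorem is quoted from \cite{GZ1} and the present paper gives no proof of it, so the only meaningful comparison is with the strategy of that reference, whose skeleton (quasi-bases of $\mathrm{Im}\,f_r$, the submersivity lemma, a Witt-type extension theorem over $F[[t]]$) your outline broadly reproduces. Your well-definedness and surjectivity steps are fine as stated, granting the characterization of submersive points you attribute to \cite[Lemma 5.7]{GZ1}.

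The injectivity step, however, contains a genuine gap, located exactly where you placed the main difficulty. First, $V[[t]]/K\cong M$ is a \emph{torsion} $F[[t]]$-module, so the form $(\cdot,\cdot)$ on $V[[t]]$ does not descend to it, and $i\in S^2_t(M)$ is not an honest bilinear form on a free module. Writing $r=\sum_i w_i\otimes v_i$ with $w_1,\dots,w_l$ a quasi-basis of $M$ of orders $t^{k_i}$ and the $v_i\in V[[t]]$ a quasi-basis of a primitive submodule (as \cite[Lemma 5.3]{GZ1} permits), the hypothesis $T(r)=T(r')$ only yields the congruences $(v_i,v_j)\equiv(v_i',v_j')\bmod t^{\min(k_i,k_j)}$, since $w_i\otimes w_j$ is torsion of that order; conversely $r\cdot g=r'$ only requires $v_ig\equiv v_i'\bmod t^{k_i}$. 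The statement you must prove is therefore a Witt extension theorem \emph{with congruence conditions}, and your reduction silently replaces it with the extension of an exact isometry between submodules; the passage from the congruence data to an exact isometry (or a direct induction handling the congruences) is precisely the content you are missing. Second, in the inductive lifting you attribute solvability of the linear equation at each stage to smoothness of the group scheme $\mathrm{O}(V)$. Smoothness of the group lets you lift points of the \emph{group}; what you actually need is surjectivity of the differential of the orbit map $\xi\mapsto(v_i\xi)_i$, $\xi\in\frak{g}'$, onto the affine subspace of corrections compatible with the Gram-matrix errors, i.e.\ smoothness of the transporter. That surjectivity is exactly the submersivity condition defining $U(i)_F$, equivalently the primitivity (linear independence mod $t$) of the $v_i$ --- already over a field, Witt's theorem fails for linearly dependent tuples with equal Gram matrices, e.g.\ $(v,0)$ versus $(v,u)$ with $u$ isotropic and orthogonal to $v$. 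Your injectivity argument never invokes this hypothesis, even though it is the one structural input that makes the lifting possible. To repair the proof, formulate the target statement with the congruences above and run the induction using the linear independence of the reductions $v_i^{(0)}$ to solve the linear system at each stage.
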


Let $\textrm{Gr}\left(W((t))\right)$ be the set of Lagrangian subspaces $U$ of $W((t))$ such that $U$ is an $F[[t]]$-submodule commensurable with $Y=W[[t]]$. Then the natural projection
$p_X:W((t))\to X$ defines a map
\begin{equation}\label{pgr}
P_{\textrm{Gr}}: \textrm{Gr}\left(W((t))\right)\to \textrm{Gr}(X_F),\quad U\mapsto p_X(U),
\end{equation}
which is $G(F[[t]])$-equivariant but not surjective.
Recall that $P=P_a$ is the maximal parabolic subgroup of $\widetilde{G}(F((t)))$ stabilizing $\ell^-\oplus tW[[t]]$, and we have chosen symplectic basis $\{e_1,\ldots, e_a, f_1,\ldots, f_a\}$ of $\ell^+\oplus \ell^-$ and $\{e_{a+1},\ldots, e_n, f_{a+1},\ldots, f_n\}$ of $W_0$ respectively.
Let
\begin{equation}\label{p0a}
P_{0,a}=G(F[[t]])\cap P,
\end{equation}
which is a non-maximal standard parabolic subgroup of $G(F((t)))$ corresponding to removing the nodes $\alpha_0$ and $\alpha_a$ from the Dynkin diagram (\ref{dynkin}). Then by definition it is the stabilizer of the two-step filtration
\[
0\subset \ell^-\oplus tY\subset Y.
\]
 The Levi subgroup of $P_{0,a}$ is $\textrm{GL}_a\times\textrm{Sp}(W_0)$, where $\textrm{GL}_a\cong \textrm{GL}(\ell^+)\cong\textrm{GL}(t^{-1}\ell^-)$. The latter isomorphism is induced by the symplectic pairing and can be explicated as $g\mapsto {}^tg^{-1}$ using symplectic basis. We have its Weyl group
\begin{equation}\label{w0a}
\mathcal{W}_{0,a}\cong \mathcal{S}_a \times \mathcal{W}_0,
\end{equation}
where $\mathcal{S}_a$ and $\mathcal{W}_0$ are the Weyl groups of $\textrm{GL}_a$ and $\textrm{Sp}(W_0)$ respectively.

The following lemma characterizes the image of $P_\mathrm{Gr}$, in terms of $(G(F[[t]]), P_{0,a})$-double cosets. The resulting double coset representatives are slightly more complicated than those in \cite[Lemma 6.3]{GZ2}. However the advantage of taking left cosets for such a parabolic subgroup $P_{0,a}$ instead of $G(F[[t]])$ is that the map $P_\textrm{Gr}$ is still $P_{0,a}$-equivariant, and on the other hand $P_{0,a}$ normalizes $U=U_P$ and stabilizes $t^{-1}\ell^-\otimes_F V$. It turns out that these properties will be very useful later.

\begin{lemma}\label{pbruhat}
An element $M\in \mathrm{Gr}(X_F)$ is in the image of $P_\mathrm{Gr}$ if and only if there is $p\in P_{0,a}$ such that
\[
M\cdot p=\mathrm{Span}_{F[[t]]}\left\{ t^{-k_1}f_1,\ldots, t^{-k_\sigma}f_\sigma,  t^{-k_a}e_a,  t^{-k_{a-1}}e_{a-1}, \ldots, t^{-k_{a-\tau+1}}e_{a-\tau+1},  t^{-k_{a+1}}f_{a+1},\ldots, t^{-k_{a+\rho}} f_{a+\rho} \right\},
\]
where $\sigma,\tau\geq 0$, $\sigma+\tau\leq a$, $0\leq \rho\leq n-a$, $k_1\geq \cdots\geq k_\sigma\geq 1$, $k_a\geq\cdots\geq k_{a-\tau+1}\geq 1$ and $k_{a+1}\geq\cdots\geq k_{a+\rho}\geq 1$.
\end{lemma}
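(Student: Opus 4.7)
The plan is to prove the two implications separately, exploiting the $F[[t]]$-module structure of $M$ together with the Lagrangian condition on its preimage.

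For the sufficiency direction, given $M$ of the stated form I would construct an explicit Lagrangian preimage as follows. Let $\mathcal{F}=\{f_1,\ldots,f_\sigma,\, e_{a-\tau+1},\ldots,e_a,\, f_{a+1},\ldots,f_{a+\rho}\}$ be the set of generating vectors with attached exponents $k_v$, and let $v^*$ denote the basis vector of $W$ dual to $v\in\mathcal{F}$ under $\langle,\rangle_F$; set $\mathcal{F}^*=\{v^*:v\in\mathcal{F}\}$. I would put
\[
U=M+Y',\qquad Y'=\bigoplus_{v\in\mathcal{F}}F[[t]]\cdot t^{k_v}v^*\ \oplus\ \bigoplus_{w\notin\mathcal{F}\cup\mathcal{F}^*}F[[t]]\cdot w,
\]
where $w$ runs over the standard basis of $W$ in the second sum. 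Then $U\cap Y=Y'$ has codimension $\sum_v k_v=\dim_F M$ in $Y$, matching $\dim_F(U/Y')$, so $U$ is commensurable with $Y$ and has the correct ``half-dimension''. Isotropy $\langle U,U\rangle=0$ splits into $\langle M,M\rangle=\langle Y',Y'\rangle=\langle M,Y'\rangle=0$. The first uses orthogonality inside $\mathcal{F}$: disjointness of $\{1,\ldots,\sigma\}$ and $\{a-\tau+1,\ldots,a\}$ forced by $\sigma+\tau\leq a$, together with the choice of a single polarization side in $W_0$, kills all relevant pairings. The second is automatic since $Y'\subset Y$. The third follows from $\langle t^{-k_v}v,\,t^l v^*\rangle\neq 0$ only when $l=k_v-1$, combined with $Y'$ containing $v^*\cdot t^l$ only for $l\geq k_v$.

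For the necessity direction, assuming $M=p_X(U)$ for some Lagrangian $U$, I would bring $M$ into canonical form via $P_{0,a}$ in several stages. By the structure theorem for finitely generated torsion modules over the PID $F[[t]]$, write $M=\bigoplus_i F[[t]]\alpha_i$ with each $\alpha_i=t^{-k_i}w_i+(\text{lower order})$ of order $k_i$, and define the descending ``leading flag'' $M^{[k]}\subset W$ as the image of $t^{k-1}M$ in the socle $X^{(1)}=\{x\in X:tx=0\}$, identified with $W$ via $t^{-1}w\mapsto w$. The crucial claim is that each $M^{[k]}$ is isotropic: choosing lifts $\tilde\alpha_i\in U$ and using $F[[t]]$-closure of $U$ so that both $t^{k_i-1}\tilde\alpha_i$ and $t^{k_j}\tilde\alpha_j$ lie in $U$, the Lagrangian vanishing $\langle t^{k_i-1}\tilde\alpha_i,\,t^{k_j}\tilde\alpha_j\rangle=0$ isolates $\langle w_i,w_j\rangle_F$ as the only surviving residue contribution, since the $Y$-corrections of the lifts, after shifting by positive powers of $t$, land in $W[[t]]$ and contribute nothing. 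Once isotropy is established, the $\mathrm{Sp}(W_0)$-factor of the Levi of $P_{0,a}$ sends the $W_0$-projection of $M^{[1]}$ into $W_0^-=\mathrm{Span}(f_{a+1},\ldots,f_n)$, with a Weyl rearrangement placing generators as $f_{a+1},\ldots,f_{a+\rho}$ in decreasing order of exponents; the $\mathrm{GL}(\ell^+)$-factor (acting on $\ell^+$ and dually on $\ell^-$) transports any isotropic subspace of $\ell^+\oplus\ell^-$ to $\mathrm{Span}(e_{a-\tau+1},\ldots,e_a)\oplus\mathrm{Span}(f_1,\ldots,f_\sigma)$ with $\sigma+\tau\leq a$, this inequality being precisely the isotropy constraint. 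Finally, the pro-unipotent congruence subgroup $K_1\subset P_{0,a}$ together with the unipotent radical of $P_{0,a}$ clears lower-order corrections of the $\alpha_i$'s, reducing each to the monomial $t^{-k_i}w_i$; the clearing is performed inductively in decreasing $k_i$ so modifications at one order do not disrupt the others.

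The main obstacle should be the isotropy of the leading flag. The naive leading product $\langle t^{-k_i}w_i,\,t^{-k_j}w_j\rangle$ sits at degree $t^{-k_i-k_j}$, too negative to contribute a residue, so one must shift both sides by the correct powers of $t$ — staying inside $U$ by $F[[t]]$-closure — to bring $\langle w_i,w_j\rangle_F$ into the residue slot, while simultaneously verifying that the $Y$-correction contributions cannot cancel it. A secondary subtlety is the $K_1$-cleanup: congruence adjustments for one generator generically alter others' lower-order parts, and a careful downward induction is needed to avoid circularity.
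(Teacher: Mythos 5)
Your proof is essentially correct but follows a genuinely different route from the paper. The paper's necessity argument is a two-line appeal to group theory: it quotes from \cite{GZ2} that $G(F((t)))$ acts transitively on $\mathrm{Gr}(W((t)))$ with stabilizer $G(F[[t]])$ at $Y$, writes $U=Yg$, and then invokes the affine Bruhat decomposition $G(F[[t]])\backslash G(F((t)))/P_{0,a}\cong \mathcal{W}\backslash\widetilde{\mathcal{W}}/\mathcal{W}_{0,a}\cong Q^\vee/\mathcal{W}_{0,a}$, so that dominant coweight representatives $\mathrm{diag}(t^{k_1},\ldots,t^{k_n},t^{-k_1},\ldots,t^{-k_n})$ immediately produce the monomial form and the inequalities on the $k_i$'s. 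You instead work directly with the torsion $F[[t]]$-module $M$: your residue computation $\langle t^{k_i-1}\tilde\alpha_i,\,t^{k_j}\tilde\alpha_j\rangle=\langle w_i,w_j\rangle_F$ is correct (the $Y$-corrections land in $F[[t]]$ after pairing and contribute no residue), and it is exactly the right way to extract from the Lagrangian condition the isotropy of the leading flag, which in turn forces $\sigma+\tau\leq a$ and $\rho\leq n-a$. Your explicit Lagrangian preimage for the sufficiency direction is also a genuine addition, since the paper dismisses that direction as ``easy.'' What the paper's approach buys is that the entire normalization is subsumed in the standard parametrization of $(\,G(F[[t]]),P_{0,a})$-double cosets; what your approach buys is an elementary, self-contained argument that makes visible \emph{where} the symplectic/Lagrangian structure enters (namely, only through isotropy of the leading flag).

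Two points need attention. First, in the sufficiency construction $Y'$ as written omits the lines $F[[t]]\cdot v$ for $v\in\mathcal{F}$, so $U=M+Y'$ is not $t$-stable if $M$ denotes the $F$-span of $\{t^{-j}v:1\leq j\leq k_v\}$ inside $W((t))$; you should either add those lines to $Y'$ or take $M$ to mean the full $F[[t]]$-submodule $\bigoplus_v t^{-k_v}F[[t]]v$ of $W((t))$ (the dimension bookkeeping still balances either way). Second, and more seriously, the Levi normalization and the $K_1$-cleanup are where all the remaining work sits, and your sketch understates it: an isotropic subspace of $\ell^+\oplus W_0\oplus\ell^-$ does not split as a direct sum of pieces in $\ell^+$, $W_0$, $\ell^-$, so the unipotent radical of $P_{0,a}\cap\mathrm{Sp}(W)$ (not just the Levi) is needed already to put the leading \emph{flag} (not merely $M^{[1]}$) in standard position compatibly with the multiplicities $k_i$; and the downward induction clearing lower-order terms must verify at each stage that the required modification $v_i\mapsto v_i+t\gamma_i$ extends to a symplectic element of $K_1$, i.e.\ that the linear system $\langle v_i,\gamma_j\rangle+\langle\gamma_i,v_j\rangle=0$ admits solutions realizing the prescribed corrections. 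This is doable (it amounts to re-deriving by hand the piece of the affine Bruhat decomposition the paper cites), but as written it is a plan rather than a proof, and it is precisely the step your approach must carry that the paper's does not.
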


\begin{proof} Since this is a variant of \cite[Lemma 6.3]{GZ2} and the proof is similar, we will only give a sketch. The sufficiency is easy. To show the necessity, assume that $M=P_\textrm{Gr}(U)$ for $U\in \textrm{Gr}(W((t)))$. By \cite[Lemma 6.2]{GZ2}, $G(F((t)))$ acts on $\textrm{Gr}(W((t)))$ transitively and the stabilizer of $Y$ is $G(F[[t]])$. Thus we may write $U=Yg$ for some $g\in G(F((t)))$. By the well-known Bruhat decomposition, we have natural bijections
\[
G(F[[t]])\backslash G(F((t)))/ P_{0,a}\cong \mathcal{W}\backslash \widetilde{\mathcal{W}}/ \mathcal{W}_{0,a}\cong Q^\vee/ \mathcal{W}_{0,a},
\]
where we have used that $\widetilde{\mathcal{W}}\cong \mathcal{W}\ltimes Q^\vee$ (see the statement below (\ref{bruhat})). Using (\ref{w0a}) and taking dominant cocharacter representatives for the quotient
$Q^\vee/ \mathcal{W}_{0,a}$, we  can decompose $g$ as
\[
g=p_0 \cdot \textrm{diag}\left(t^{k_1}, \ldots, t^{k_n}, t^{-k_1},\ldots, t^{-k_n}\right)\cdot p,
\]
where $p_0\in G(F[[t]])$, $p\in P_{0,a}$, $k_1\geq\cdots\geq k_a$ and $k_{a+1}\geq \cdots\geq  k_n\geq 0$. Note that $k_1,\ldots, k_a$ can be an arbitrary non-increasing sequence of integers, which may be positive or negative. The required assertion follows easily from this.
\end{proof}

This lemma together with some useful notions from \cite{GZ1} which we now recall,
enable us to obtain more concrete description of the loop group orbits.
A {\it quasi-basis} of a finitely generated $F[[t]]$-module $M$ is a set of nonzero elements $u_1,\ldots, u_l\in M$ which generate $M$ and an $F[[t]]$-linear combination $a_1u_1+\cdots+a_lu_l=0$ if and only if all $a_iu_i=0$. A submodule $L\subset M$ is called a {\it primitive submodule} if it is a direct summand and the natural map $L/tL\to M/tM$ induced from $L\hookrightarrow M$ is injective.
By \cite[Lemma 5.3]{GZ1}, if $w_1,\ldots, w_l$ is a quasi-basis of $\textrm{Im }f_r$, then there exist elements $v_1,\ldots, v_l\in V[[t]]$ such that they form a quasi-basis of a primitive submodule of $V[[t]]$ and $r=\sum^l_{i=1}w_i\otimes v_i$. Thus from Lemma \ref{pbruhat} we obtain the following result.

\begin{corollary}\label{rep}
If $\mathrm{Im~}f_r\in \mathrm{Im~}P_\mathrm{Gr}$, then there exists $p\in P_{0,a}$ such that $r\cdot p$ is of the form
\begin{equation}\label{rg}
\sum^\sigma_{i=1}t^{-k_i}f_i\otimes v_i + \sum^\tau_{i=1}t^{-k_{a-i+1}}e_{a-i+1}\otimes v_{a-i+1} + \sum^\rho_{i=1}t^{-k_{a+i}}f_{a+i}\otimes v_{a+i},
\end{equation}
where  the $k_i$'s are given by Lemma \ref{pbruhat}, and all the $v_i$'s form a quasi-basis of a primitive submodule of $V[[t]]$.
\end{corollary}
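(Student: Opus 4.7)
The plan is to derive the corollary by combining Lemma \ref{pbruhat} with \cite[Lemma 5.3]{GZ1} through a simple equivariance argument. First I would record the compatibility of the map $r\mapsto \mathrm{Im~}f_r$ with the $P_{0,a}$-action: since any $p\in P_{0,a}\subset G(F[[t]])$ preserves $Y$, it acts on $X_F$ via $w\mapsto w\cdot a_p$, and the definition (\ref{fr}) gives $f_{r\cdot p}(v) = \sum_i (v_i,v)(w_i\cdot a_p) = f_r(v)\cdot a_p$, whence $\mathrm{Im~}f_{r\cdot p} = \mathrm{Im~}f_r\cdot p$.

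Next, because $\mathrm{Im~}f_r\in \mathrm{Im~}P_{\mathrm{Gr}}$ by hypothesis, Lemma \ref{pbruhat} supplies a $p\in P_{0,a}$ such that $\mathrm{Im~}f_{r\cdot p}$ is the $F[[t]]$-span of the list
\[
u_1=t^{-k_1}f_1,\,\ldots,\,u_\sigma=t^{-k_\sigma}f_\sigma,\;\; u'_1=t^{-k_a}e_a,\,\ldots,\,u'_\tau=t^{-k_{a-\tau+1}}e_{a-\tau+1},\;\; u''_1=t^{-k_{a+1}}f_{a+1},\,\ldots,\,u''_\rho=t^{-k_{a+\rho}}f_{a+\rho}.
\]
These generators actually form a quasi-basis of $\mathrm{Im~}f_{r\cdot p}$: each is a nonzero $F[[t]]$-multiple of a distinct element of the $F$-basis $\{e_i,f_i\}_{i=1}^n$ of $W$, so any $F[[t]]$-relation $\sum_i a_i u_i + \sum_j a'_j u'_j + \sum_k a''_k u''_k = 0$ decouples termwise into $a_i u_i = a'_j u'_j = a''_k u''_k = 0$. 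This is the one place where a line of verification is needed, and it is the step I expect to be the mildest obstacle; nothing deeper intervenes.

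Finally, applying \cite[Lemma 5.3]{GZ1} to the element $r\cdot p\in \mathrm{Im~}f_{r\cdot p}\otimes_F V$ together with the quasi-basis just produced yields vectors $v_i, v_{a-j+1}, v_{a+k}\in V[[t]]$ (for $1\leq i\leq\sigma$, $1\leq j\leq\tau$, $1\leq k\leq\rho$) that form a quasi-basis of a primitive submodule of $V[[t]]$, and for which
\[
r\cdot p = \sum_{i=1}^\sigma u_i\otimes v_i + \sum_{j=1}^\tau u'_j\otimes v_{a-j+1} + \sum_{k=1}^\rho u''_k\otimes v_{a+k}.
\]
After renaming indices this is exactly the expression (\ref{rg}) appearing in the statement, so the corollary follows. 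No further obstacle is anticipated: once Lemma \ref{pbruhat} has been invoked, the argument reduces to the equivariance of $r\mapsto \mathrm{Im~}f_r$ and a direct application of \cite[Lemma 5.3]{GZ1}.
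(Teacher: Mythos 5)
Your proof is correct and takes essentially the same route as the paper, which obtains the corollary precisely by combining Lemma \ref{pbruhat} with \cite[Lemma 5.3]{GZ1}; you merely spell out the $P_{0,a}$-equivariance $\mathrm{Im}\,f_{r\cdot p}=\mathrm{Im}\,f_r\cdot p$ and the check that the listed generators form a quasi-basis, both of which the paper leaves implicit.
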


\subsection{Vanishing of negligible terms}

For each $M\in \textrm{Gr}(X_F)$ we can define
\[
\theta^{f, M}_\phi(g):=\sum_{r\in R: \textrm{ Im }f_r=M}\theta^{f, \mathcal{O}_r}_\phi(g)=\sum_{r\in R:\textrm{ Im }f_r=M}\int_{G'(F[[t]])_r\backslash G'(\mathbb{A}[[t]])} \omega(g,h)\phi(r)\cdot f(h)dh.
\]
We call an orbit $\mathcal{O}_r$  negligible if $\textrm{Im }f_r\not\in \textrm{Im }P_\mathrm{Gr}$. Then we have the following result analogous to  \cite[Lemma 7.4]{GZ2}, which states that the contribution from the negligible orbits to the theta integral itself is zero.

\begin{lemma}\label{lemerror}
If $M\not\in\mathrm{Im~}P_\mathrm{Gr}$, then $\theta^{f,M}_\phi(g)=0$.
\end{lemma}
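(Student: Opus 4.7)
The plan is to adapt \cite[Lemma 7.4]{GZ2} to the present theta-integral setting, where the cuspidality of $f$ replaces the Eisenstein-type cancellation available there. For each orbit $\mathcal{O}_r$ in the negligible class $M = \mathrm{Im}\, f_r \notin \mathrm{Im}\, P_{\mathrm{Gr}}$, the aim is to exhibit a proper $F$-parabolic subgroup $P' \subset G'$ with unipotent radical $U'$ whose constant-loop embedding (possibly thickened by an appropriate pro-unipotent subgroup of $N(\mathbb{A}) = \ker[G'(\mathbb{A}[[t]]) \to G'(\mathbb{A})]$) lies in the stabilizer of $r$ and fixes $\omega(g, \cdot)\phi(r)$. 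Granted this, the inner integral in the orbital decomposition $\theta^{f,\mathcal{O}_r}_\phi(g)$ unfolds to a product of an outer integral and the cuspidal period $\int_{U'(F)\backslash U'(\mathbb{A})} f(u'h)\,du' = 0$. In cases where the $U'$-symmetry arises only after combining several orbits with the same $M$, the cancellation must be seen by first collecting $\theta^{f,M}_\phi(g)$ as a single sum $\int \sum_{r: \mathrm{Im}\, f_r = M} \omega(g, h)\phi(r) f(h) dh$ and exploiting that the inner sum is $U'(\mathbb{A})$-invariant in $h$.

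The construction of $U'$ is the crux. Via Corollary \ref{rep}, any $r$ in the negligible class decomposes as $r = \sum_i w_i \otimes v_i$ with $\{w_i\}$ a quasi-basis of $M$ and $\{v_i\}$ a quasi-basis of a primitive submodule of $V[[t]]$; the failure $M \notin \mathrm{Im}\, P_{\mathrm{Gr}}$ is encoded by the presence of a ``forbidden'' configuration of generators in the affine-Bruhat normal form of $M$ (cf.\ Lemma \ref{pbruhat}), typically a matched $e_i, f_i$ pair with $i \le a$ or an unallowed $e_j$ with $j > a$. The plan is to translate this combinatorial obstruction, through the orbit invariant $T(r) \in S_t^2(M)$ of Theorem \ref{clas}, into a geometric constraint on the vectors $v_i$ --- namely, a nonzero isotropic $F$-subspace $V_1 \subset V$ annihilating the relevant pairings $(v_i, v_j)$. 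The associated $F$-parabolic $P' \subset G' = \mathrm{O}(V)$ is then the stabilizer of the flag $0 \subset V_1 \subset V_1^\perp \subset V$, and the required Weil-representation triviality of its unipotent radical $U'$ on $\phi(r)$ is verified via Proposition \ref{uact}.

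The main obstacle is precisely this passage from the algebraic defect of $M$ to the geometric unipotent symmetry of $r$ (or of the relevant averaged sum over orbits with fixed $M$). I expect the argument to require a case analysis on the affine-Bruhat normal form of $M$, parallel in spirit to \cite[\S7]{GZ2} but conducted on the $G'$-side, and making essential use of the $F[[t]]$-module structure of $\mathbb{X}_F$ via the identification $\mathbb{X}_F \cong \mathbb{W}((t))/\mathbb{Y}_F$ to relate symmetries in $G'(\mathbb{A})$ to symmetries in $G'(\mathbb{A}[[t]])$. In particular, one must track how a ``forbidden'' generator of $M$ propagates to force a degeneracy of $T(r)$ and then to an isotropic flag in $V$, and one must cross-check via Proposition \ref{uact} that the Weil-representation cocycle in (\ref{gamma0}) remains trivial on the combined constant-loop-plus-pro-unipotent subgroup used in the unfolding. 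Once this linear-algebraic and geometric step is in place, the remaining unfolding and appeal to cuspidality of $f$ are formal and mirror the vanishing of negligible orbits in the classical Rallis tower property.
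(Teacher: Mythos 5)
Your proposal takes a genuinely different route from the paper, and it has a gap that I do not think can be repaired. The paper's proof of Lemma \ref{lemerror} does \emph{not} use the cuspidality of $f$ at all (the text states this explicitly): the vanishing mechanism is oscillation of the Weil-representation character, not a cuspidal period. Concretely, the obstruction $M=\mathrm{Im\,}f_r\notin\mathrm{Im\,}P_{\mathrm{Gr}}$ is shown (via \cite[Lemma 8.2]{GZ2}) to be equivalent to the nonvanishing of the \emph{skew-symmetric} invariant $T'(r)=\sum_{i,j}\langle w_i,w_j\rangle\, v_i\otimes v_j\in\wedge^2_t(M')$, built from the symplectic pairings of the $w_i$ --- not, as you propose, to the existence of a nonzero isotropic subspace $V_1\subset V$ annihilating the pairings $(v_i,v_j)$. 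One then takes the Lie subalgebra $\frak{g}'_{M'}=\{\xi\in t\frak{g}'[[t]]:(M'+V[[t]])\xi\subset V[[t]]\}$ of the \emph{pro-unipotent} radical $N$ (the congruence kernel of $G'(F[[t]])\to G'(F)$), uses the surjection $\frak{g}'_{M'}\to\wedge^2_t(M')^{*}$ to see that $u\mapsto\psi(\tfrac12\langle r,rb_u\rangle)$ is a nontrivial character on the compact arithmetic quotient of $\exp\frak{g}'_{M'}$, and observes that $f$ is invariant under $\exp\frak{g}'_{M'}(\bb{A})\subset N(\bb{A})$ because $f$ factors through $h(0)$. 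The integral of a nontrivial character against an invariant function over a compact quotient is zero.

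The concrete failure of your approach: negligible orbits exist even when $V$ is anisotropic (e.g. $r=w_1\otimes v_1+w_2\otimes v_2$ with $\langle w_1,w_2\rangle\neq 0$ and $v_1,v_2$ independent gives $T'(r)\neq 0$), and in that case $G'=\mathrm{O}(V)$ has no proper $F$-parabolic $P'$, so there is no unipotent radical $U'$ and no cuspidal period to unfold. Hence the passage you flag as the ``crux'' --- from the defect of $M$ to an isotropic flag in $V$ --- is not merely difficult but false in general. You have in effect transplanted the mechanism of the \emph{next} step of the argument: in Proposition \ref{last}, for orbits with $\mathrm{Im\,}f_r\in\mathrm{Im\,}P_{\mathrm{Gr}}$ but $r\notin t^{-1}\ell^-\otimes_F V$, the paper does extract an isotropic subspace $V_-\subset V$, forms the parabolic $Q$ stabilizing it, and invokes cuspidality of $f$ along $N_Q$. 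For the negligible orbits themselves, the correct subgroup lives entirely inside the positive-loop congruence part of $G'(F[[t]])$, and only the $N(\bb{A})$-invariance of $f$ is needed.
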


The proof is essentially the same as that of the above mentioned lemma in \cite{GZ2}, except that we also need to take care of the cusp form $f(h)$. Thus we will only outline the arguments, despite the length and complexity of the original proof. We mention that the proof only uses the property that $f$ is invariant under $N(\bb{A})$, not the cuspidality.

\begin{proof} It suffices to prove that if $r\in \bb{X}_{F}$ is such that $\textrm{Im }f_r\not\in\textrm{Im }P_\textrm{Gr}$, then
\begin{equation}\label{error}
\int_{G'(F[[t]])_r\backslash G'(\bb{A}[[t]])_r}\psi\left(\frac{1}{2}\langle r, rb_h\rangle\right)\cdot f(h)dh=0.
\end{equation}
Recall from (\ref{ug}) that one has the unipotent radical
\[
N=\{g\in G'(F[[t]]): g(0)=1\}
\]
of $G'(F[[t]])$. There is a bijective exponential map
$\exp: t\frak{g}'[[t]]\to N$ defined in the same way as (\ref{exp}), where $\frak{g}'$ is the Lie algebra of $G'$.
In a way similar to (\ref{fr}), for
\[
r=\sum^l_{i=1}w_i\otimes v_i\in {\bb X}_F=W\otimes_F V[t^{-1}]t^{-1},
\]
define
\[
 f'_r: Y_F\to V[t^{-1}]t^{-1}, \quad f'_r(w)=\sum^l_{i=1}\langle w_i, w\rangle v_i.
\]
Let $M'=\textrm{Im }f_r'$, and similarly to (\ref{tm}) define
\[
 T'(r)=\sum_{i,j}\langle w_i, w_j\rangle v_i\otimes v_j \in \wedge^2_t(M'),
\]
where $\wedge^2_t (M')$ is the subspace of skew-symmetric tensors in $M'\otimes_{F[[t]]} M'$.
 By \cite[Lemma 8.2]{GZ2}, $\textrm{Im }f_r\not\in \textrm{Im }P_\textrm{Gr}$ if and only if $T'(r)\neq 0$.  Let
 \[
 \frak{g}'_{M'}=\{\xi\in t\frak{g}'[[t]]: (M'+V[[t]])\xi\subset V[[t]]\},
 \]
 which is a Lie subalgebra of $t\frak{g}'[[t]]$. By \cite[Lemma 8.4]{GZ2}, there is a natural surjective map
$\frak{g}'_{M'} \to \wedge^2_t(M')^*$,  $\xi\mapsto\bar{\xi}$, where $\wedge^2_t(M')^*$ is the dual space of $\wedge^2_t(M')$. Then the character
\[
\exp(\xi) \mapsto \psi\left(\frac{1}{2}\langle r, rb_{\exp(\xi)}\rangle\right)
\]
of the arithmetic quotient of $\exp\frak{g}'_{M'}$ is non-trivial, because the differential of $\exp(\xi)\mapsto \langle r, rb_{\exp(\xi)}\rangle$ is $\xi\mapsto (T'(r), \bar{\xi})$, which is non-trivial thanks to the above quoted results. But $f(h)$ is invariant under $\exp\frak{g}'_{M'}(\bb{A})\subset N(\bb{A})$, hence (\ref{error}) vanishes.  \end{proof}

\subsection{Constant terms of orbital integrals} We are now ready to prove Theorem \ref{main}. Let $R_0=\left\{r\in R: \textrm{Im }f_r\in\textrm{Im }P_\mathrm{Gr}\right\}$. By Lemma \ref{lemerror} we now have
\begin{equation}\label{orbint2}
\theta^f_\phi(g)=\sum_{r\in R_0} \theta^{f,\mathcal{O}_r}_\phi(g).
\end{equation}
To compute the constant term, we assemble the orbital integrals in (\ref{orbint2}) as follows. The actions of the loop group dual pair commute, hence we have $\mathcal{O}_r\cdot p=\mathcal{O}_{r\cdot p}$ for $r\in R$ and $p\in P_{0,a}$.
By Theorem \ref{clas} we have a well-defined map
\[
R\to \textrm{Gr}(X_F), \quad \mathcal{O}_r\mapsto \textrm{Im }f_r,
\]
which is  $P_{0,a}$-equivariant because it is clear that $\textrm{Im }f_{r\cdot p}=\textrm{Im }f_r\cdot p$. Since the map $P_\textrm{Gr}$ is also $P_{0,a}$-equivariant and the map above restricts to $R_0\to \textrm{Im }P_\mathrm{Gr}$, we see that the group $P_{0,a}$ permutes the orbits $\mathcal{O}_r$, $r\in R_0$. Noting that $U\subset P_{0,a}$, the subspace
\[
\bigcup_{r\in R_0}\mathcal{O}_r\subset \bb{X}_{F}
\]
can be decomposed into $(U, G'(F[[t]]))$-orbits. Let $\widetilde{R}$ be a set of representatives of such orbits, and $\widetilde{\mathcal{O}}_r$ be the corresponding $(U,G'(F[[t]]))$-orbit of $r\in\widetilde{R}$. Then we have the constant term
\begin{equation}\label{cst}
\int_{U(F)\backslash U(\bb{A})}\theta_\phi^f(ug)du=\sum_{r\in \widetilde{R}}I^{\widetilde{\mathcal{O}}_r}_\phi(g,f),
\end{equation}
where
\begin{align}\label{lastint}
I^{\widetilde{\mathcal{O}}_r}_\phi(g,f)&:=\int_{U(F)\times G'(F[[t]])\backslash U(\mathbb{A})\times G'(\mathbb{A}[[t]])}\sum_{r'\in\widetilde{\mathcal{O}}_r}\omega(ug,h)\phi(r')\cdot f(h)dh du\\
&=\int_{\left(U(F)\times G'(F[[t]])\right)_r \backslash U(\mathbb{A})\times G'(\mathbb{A}[[t]])} \omega(ug, h)\phi(r)\cdot f(h)dh du.\nonumber
\end{align}
It is clear that the commuting group actions of $P_{0,a}$ (which contains $U$) and $G'(F[[t]])$ stabilize the space $t^{-1}\ell^-\otimes_F V$ as well as its complement in $\bb{X}_{F}$.
Thus  to finish the proof of Theorem \ref{main}, it is enough to prove the following:

\begin{proposition}\label{last}
 If $r\in\wt{R}\setminus (t^{-1}\ell^-\otimes_FV)$, then $I^{\widetilde{\mathcal{O}}_r}_\phi(g,f)=0$.
\end{proposition}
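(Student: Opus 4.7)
My plan is to extend the case analysis of Lemmas \ref{lemx}, \ref{lemz}, \ref{lemy} to handle the collapse of the relevant $V$-valued Gram matrices when $V$ is isotropic, invoking the cuspidality of $f$ as the replacement tool precisely in the cases where such a collapse occurs. First I would use Corollary \ref{rep} together with the $P_{0,a}$-equivariance of the orbit maps and the facts that $P_{0,a}$ normalizes $U$ and stabilizes $t^{-1}\ell^-\otimes V$, to reduce to the situation where the representative $r \in \wt{R}$ takes the canonical form
\[
r = \sum_{i=1}^\sigma t^{-k_i}f_i\otimes v_i + \sum_{i=1}^\tau t^{-k_{a-i+1}}e_{a-i+1}\otimes v_{a-i+1} + \sum_{i=1}^\rho t^{-k_{a+i}}f_{a+i}\otimes v_{a+i},
\]
with all the $v_j$'s forming a quasi-basis of a primitive submodule of $V[[t]]$. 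Under this reduction the hypothesis $r \notin t^{-1}\ell^-\otimes V$ is equivalent to at least one of: $\tau > 0$, $\rho > 0$, or some $k_i \geq 2$ for $i \leq \sigma$.

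In each offending case I would mirror the corresponding anisotropic lemma: construct a subgroup $H \subseteq U_r$ built out of $U^+$, $U^-$, or $U^0$ respectively, using elements such as $n^+(0,\beta)$ with $\beta \in t^{2l-1}\rm{Sym}^2\ell^-$, $n^-(0,\gamma)$ with $\gamma \in t^{2l-1}\rm{Sym}^2\ell^+$, or $n^0(1,\exp(\xi))$ with $\xi \in t^{2l-1}\frak{sp}(W_0)$, and compute the restriction $\psi_r|_H$. In every case the character comes out in the shape $\psi(\tfrac{1}{2}\rm{Tr}(G\cdot\xi_0))$, where $G$ is the Gram matrix of the leading $V$-coefficients of the relevant $v_i$'s and $\xi_0$ is a free parameter. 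If $G \neq 0$ the integral vanishes exactly as in Section \ref{s4}. The new feature is that when $G = 0$ these leading coefficients necessarily span an isotropic subspace $V' \subseteq V$.

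When $V'$ is isotropic, let $Q = L_QU_Q$ be the parabolic subgroup of $G'$ stabilizing $V'$. The crucial remaining step is to exhibit a subgroup $\wt{U}_Q \subseteq G'(F[[t]])_r$ such that (i) $\omega(u_0)\phi(r) = \phi(r)$ for every $u_0 \in \wt{U}_Q(\bb{A})$, and (ii) the $t=0$ evaluation $h \mapsto h(0)$ sends $\wt{U}_Q$ onto $U_Q$. Explicitly I expect $\wt{U}_Q$ to consist of exponentials $\exp(\xi)$ with $\xi$ ranging over a suitable subspace of $\frak{u}_Q[[t]]$, the $t$-adic order being chosen large enough (in terms of the $k_i$) to kill the coupling to $r$ at positive powers of $t$, while the isotropy of $V'$ handles the $t=0$ contribution. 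Once $\wt{U}_Q$ is in hand, Fubini together with the cuspidality identity
\[
\int_{U_Q(F)\backslash U_Q(\bb{A})} f(u_0 h(0))\, du_0 = 0
\]
forces $I^{\wt{\cal{O}}_r}_\phi(g,f) = 0$, completing the proof.

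The principal obstacle is this last step: precisely identifying $\wt{U}_Q$ and verifying that it lies in the stabilizer of $r$ while surjecting at $t=0$ onto a full unipotent radical. This demands careful bookkeeping of the $t$-powers $k_i$ against the quasi-basis structure of the primitive submodule spanned by the $v_j$, and is the point at which the combinatorial machinery developed in \cite{GZ1, GZ2} for loop group orbits has to be pushed further. The subtlest subcase is likely the mixed one, where several of $\tau > 0$, $\rho > 0$, or $k_i \geq 2$ hold simultaneously with partially overlapping isotropic subspaces, so that one may have to invoke cuspidality along a nested chain of parabolic subgroups of $G'$ rather than a single $Q$.
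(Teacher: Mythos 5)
Your reduction to the canonical form of Corollary \ref{rep} and your use of the $U^{\pm}$, $U^0$ characters to force the relevant spaces of $V$-coefficients to be isotropic match the paper's argument. The gap is in the final step, which you correctly flag as the principal obstacle but whose proposed resolution is, I believe, a dead end. You want a subgroup $\wt{U}_Q\subseteq G'(F[[t]])_r$ on which $\omega(u_0)\phi(r)=\phi(r)$ and which surjects onto $U_Q$ under $h\mapsto h(0)$. These two requirements are in tension: writing $r=r_1+r_2$ with $r_1\in t^{-1}\ell^-\otimes_F V_1$ the ``leading'' part and $r_2$ supported on the isotropic space $V_-$, an element $n=\exp(\xi)$ with $\xi(0)\in\frak{u}_Q$ nonzero moves $r_1$ (since $V_1$ lies in $V_-^{\perp}$ but is not fixed by $N_Q$), so $ra_n=r+r_n$ with $r_n\in t^{-1}\ell^-\otimes V_-(\bb{A})$ generally nonzero; hence $n\notin G'(F[[t]])_r$ and $\omega(n)\phi(r)\neq\phi(r)$ pointwise. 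Raising the $t$-adic order of $\xi$ to kill this coupling forces $\xi(0)=0$, which destroys surjectivity onto $U_Q$ at $t=0$. So no such $\wt{U}_Q$ exists in general, and the Fubini-plus-cuspidality step cannot be run in the form you describe.

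The paper's proof circumvents this by proving invariance \emph{after} integrating over the symplectic-side unipotent: it shows that the function $h\mapsto\int_{U(F)_r\backslash U(\bb{A})}\omega(ug,h)\phi(r)\,du$ is invariant under $N_Q(\bb{A})$, where $Q$ stabilizes $V_-$. The mechanism is a transfer across the dual pair: the discrepancy $r_n=ra_n-r\in t^{-1}\ell^-\otimes V_-(\bb{A})$ produced by $n\in N_Q(\bb{A})$ is realized as $ra_u-r$ for a suitable $u\in U(\bb{A})$, and the isotropy of $V_-$ together with the orthogonality $V_1\perp V_-$ forces $\langle ra_u,rb_u\rangle=0$, so $\omega(u)\phi(r)=\phi(r+r_n)=\omega(n)\phi(r)$; since the $u$-integral runs over all of $U(\bb{A})$ modulo the stabilizer, the translate is absorbed. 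Cuspidality of $f$ along $N_Q$ then kills the orbital integral, forcing $V_-=0$. Two further points your sketch glosses over and which are needed for this to work: you must record the orthogonality $V_1\perp V_-$ (not just the isotropy of the span of leading coefficients), and you should normalize $r_2$ by an element of $\rm{GL}(V_-[[t]])\subset G'(F[[t]])$ so that $r_2=\sum_i t^{-k_i}w_i\otimes v_i$ with the $v_i$ a basis of $V_-$; with this done a single parabolic $Q$ suffices, and no nested chain of parabolics is required.
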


\begin{proof} For $p\in P_{0,a}$ one has
\[
I^{\widetilde{\mathcal{O}}_{r\cdot p}}_\phi(g,f)=I^{\widetilde{\mathcal{O}}_r}_\phi(pg,f),
 \]
because  $p^{-1}\left(U\times G'(F[[t]])\right)_rp=\left(U\times G'(F[[t]])\right)_{r\cdot p}$ and $P_{0,a}\subset P$ normalizes $U$,  noting also that $\psi$ is trivial on $F$. This relation shows that for the nonvanishing of each individual $I^{\wt{\cal O}_r}_\phi(g,f)$ we may consider  representatives of $\left(P_{0,a},G'(F[[t]])\right)$-orbits, not merely $\left(U, G'(F[[t]])\right)$-orbits. Therefore by Corollary \ref{rep} we may assume that $r$ itself is of the form (\ref{rg}).

Consider the vectors $v_1,\ldots, v_{\sigma}, v_{a},\ldots, v_{a-\tau+1}$, $v_{a+1},\ldots, v_{a+\rho}$ in $V[[t]]$, given by (\ref{rg}). Clearly we may assume that $v_i$'s are polynomials,
\[
v_i=\sum^{k_i-1}_{k=0}v_i^{(k)}t^k \in V[t].
\]
Introduce certain subspaces of $V$:
\[
V_1=\textrm{Span}_F\left\{v^{(k_i-1)}_i: 1\leq i\leq \sigma \right\},\quad
V_-=V_z + V_x + V_y,
\]
where
\[
\left\{ \begin{split}
 & V_z=\textrm{Span}_F\left\{v_i^{(k)}:  1\leq i\leq \sigma,  0\leq k<k_i-1\right\}, \\
 & V_x=\textrm{Span}_F\left\{v^{(k)}_{a-i+1}:  1\leq i\leq \tau, 0\leq k<k_{a-i+1}\right\},\\
 & V_y=\textrm{Span}_F\left\{v^{(k)}_{a+i}: 1\leq i\leq \rho, 0\leq k< k_{a+i}\right\}.
\end{split}\right.
\]
Write accordingly $r=r_1+r_2=r+z+x+y$, where
\[
\left\{ \begin{split}
& r_1=\sum^\sigma_{i=1} t^{-1}f_i\otimes v_i^{(k_i-1)} \in t^{-1}\ell^-\otimes_F V_1, \\
 & z=\sum^\sigma_{i=1} t^{-k_i} f_i\otimes v_i-r_1\in t^{-2}\ell^-[t^{-1}]\otimes_F V_z,\\
&  x=\sum^\tau_{i=1} t^{-k_{a-i+1}}e_{a-i+1}\otimes v_{a-i+1}\in t^{-1}\ell^+[t^{-1}]\otimes_F V_x,\\
&  y=\sum^\rho_{i=1}t^{-k_{a+i}} f_{a+i}\otimes v_{a+i}\in t^{-1}W_0[t^{-1}]\otimes_FV_y.
\end{split}\right.
\]

Suppose that $I^{\widetilde{\mathcal{O}}_r}_\phi(g,f)$ does not vanish. Then we need to prove  that $r\in t^{-1}\ell^{-1}\otimes_F V$, which is equivalent to that $r_2=0$, or that $V_-=0$. Since $U_r\subset \left(U\times G'(F[[t]])\right)_r$, we have in particular
\[
\int_{U_r(F)\backslash U_r(\bb{A})}\omega(ug)\phi(r) du =\omega(g)\phi(r)\int_{U_r(F)\backslash U_r(\bb{A})}\psi_r(u)du\neq 0,
\]
where $\psi_r$ is given by (\ref{psir}).
Applying the arguments in the proof of Lemma \ref{lemx}--\ref{lemy}, and also applying other types of elements of $U$ as in Proposition \ref{uact}, it is not hard to deduce
that
\begin{itemize}
\item $V_-$ is an isotropic subspace of $V$;
\item $V_1$ is orthogonal to $V_-$.
\end{itemize}
For instance a variant of Lemma \ref{lemx}--\ref{lemy} implies that $V_x, V_y, V_z$ are all isotropic; application of suitable elements of the form $n^+(\mu,0)$ likewise implies the orthogonality $V_x\perp V_y$, and so on. The detailed proof will be omitted, which is more or less a duplication of the above mentioned lemmas in Section \ref{s4}.

By Corollary \ref{rep}, $v_i$'s form a quasi-basis of a primitive submodule of $V[[t]]$, hence in particular $v_i^{(0)}$'s are linearly independent over $F$. Since $V_-$ is isotropic, applying a suitable element of $\rm{GL}(V_-[[t]])\subset G'(F[[t]])$ and shrinking $V_-$ if necessary, we may assume that $v_i=v_i^{(0)}\in V_-$. Then $r_2$ is of the form
\begin{equation}\label{r2}
r_2=\sum_i t^{-k_i}w_i\otimes v_i
\end{equation}
where $v_i$'s form a basis of $V_-$, and $w_i\in W$ are linearly independent.

Assume that $V_-\neq 0$, and we shall derive a contradiction. Let $Q$ be the proper parabolic subgroup of $G'$ which stabilizes the isotropic space $V_-$, and $N_Q$ be its unipotent radical. Then $N_Q$ fixes $r_2$. We claim that the integral
\begin{equation}\label{inv}
\int_{U(F)_r\backslash U(\bb{A})}\omega(ug,h)\phi(r) du
\end{equation}
is invariant under $N_Q(\bb{A})$, from which it would follow that $I^{\widetilde{\mathcal{O}}_r}_\phi(g,f)=0$
because $f$ is a cusp form.
Thus it remains to verify the claim. Since $V_1\subset V_-^\perp$, for $n\in N_Q(\bb{A})$ we have
\[
\omega(n)\phi(r)=\phi(rn)=\phi(r+r_n)
\]
for some $r_n\in t^{-1}\ell^-\otimes V_-(\bb{A})$. Recall that $r_2$ is of the form (\ref{r2}). Then it is easy to see that there exists $u\in U(\bb{A})$ such that $ra_u=r+r_n$. The properties of
$V_1$ and $V_-$ ensure that
$\langle ra_u, rb_u\rangle=0.$
Therefore
\[
\omega(u)\phi(r)=\phi(r+r_n)=\omega(n)\phi(r),
\]
which implies the claim that (\ref{inv}) is invariant under $N_Q(\bb{A})$.
 This finishes the proof of the proposition hence Theorem \ref{main}.
\end{proof}

\end{document}